\newcommand{\RR}{\mathbb{R}}
\newcommand{\perim}{\mathrm{per}}
\newcommand{\diam}{\mathrm{diam}}
\newcommand{\area}{\mathrm{area}}
\newcommand{\round}[1]{ \ensuremath{\left( #1 \right)}  }
\newcommand{\per}[1]{ \ensuremath{\perim\left( #1 \right)}  }
\newcommand{\ar}[1]{ \ensuremath{\area\left( #1 \right)}  }
\theoremstyle{plain}
\newtheorem{thm}{Theorem}[section]
\newtheorem{lem}[thm]{Lemma}
\newtheorem{cor}[thm]{Corollary}
\newtheorem{prop}[thm]{Proposition}
\newtheorem*{prop*}{Proposition}
\newtheorem{claim}[thm]{Claim}
\theoremstyle{definition}
\newtheorem{ex}[thm]{Example}
\theoremstyle{remark}
\newtheorem{rem}{Remark}
\newcommand{\ga}{\gamma}
\newcommand{\sa}{\sphericalangle}
\title{Optimal embedded and enclosing isosceles triangles}
\author{Áron Ambrus, Mónika Csikós\thanks{Universit\'e Paris Cit\'e and ENS Paris. Research partially supported by the grant ANR-19-CE48-0016 from the French National Research Agency (ANR).}, Gergely Kiss\thanks{R\'enyi Institute, Budapest. Research partially supported by Premium Postdoctoral Fellowship of the Hungarian Academy of Sciences and by National Research, Development and Innovation Office (NKFIH) grant K-124749.}, János Pach\thanks{R\'enyi Institute, Budapest and IST Austria. Research partially supported by National Research, Development and Innovation Office (NKFIH) grant K-131529 and ERC Advanced Grant ``GeoScape.''}, Gábor Somlai\thanks{Eötvös Loránd University, R\'enyi Institute, partially supported by the J\'anos Bolyai Research Fellowship and by the Thematic Excellence Programme TKP2020-NKA-06.}}
\date{}
\begin{document}

\maketitle

\begin{abstract}
    %we study the smallest area isosceles container triangle of a triangle.
    Given a triangle $\Delta$, we study the problem of determining the smallest enclosing and largest embedded isosceles triangles of $\Delta$ with respect to area and perimeter.  This problem was initially posed by Nandakumar~\cite{N, NandakumarBlog} and was first studied by Kiss, Pach, and Somlai \cite{KPS2020}, who showed that if $\Delta'$ is the smallest area isosceles triangle containing $\Delta$, then $\Delta'$ and $\Delta$ share a side and an angle. 
    In the present paper, we prove that for any triangle $\Delta$, every maximum area isosceles triangle embedded in  $\Delta$ and  every maximum perimeter isosceles triangle embedded in $\Delta$ shares a side and an angle with $\Delta$. Somewhat surprisingly, the case of minimum perimeter enclosing triangles is different: there are infinite families of triangles $\Delta$ whose minimum perimeter isosceles containers do not share a side and an angle with $\Delta$.
\end{abstract}

\section{Introduction}
The following classical problem is the starting point of our investigation. Given two convex bodies, $C$ and $C'$ in $\RR^d$, decide whether $C$ can be moved into a position where it covers $C'$. One can easily list some necessary conditions, for instance, the volume, the surface area and the diameter of $C$ has to be at least as large as the one of $C'$. However, solving the decision problem can be rather challenging, even in $\RR^2$, or for special cases that might seem friendly at first sight.

For instance, consider the setup,  where $C'$ is the `shadow' of $C$, that is, $C$ is embedded into $\RR^3$ and $C'$ is the orthogonal projection of $C$ onto a $2$-dimensional affine subspace. 
The necessary conditions are clearly satisfied and it looks plausible that there is always a congruent copy of $C$ which covers $C'$. However, the proof of this fact is far from straightforward~\cite{DeM86,KoT90}, and rather surprisingly, the result does not generalize to higher dimensions: for $d \geq 3$, no convex $d$-polytope embedded in $\RR^{d+1}$ can cover all of its shadows \cite{DeM86}. 

Another special case is where both convex bodies are triangles in $\RR^2$: given two triangles $\Delta$ and $\Delta'$, find an efficient way to decide whether $\Delta$ can be brought into a position where it covers $\Delta'$. This is a classical problem posed by Steinhaus~\cite{St64} in $1964$ and an algorithmic solution was proposed only $29$ years later by Post \cite{Post}, who described a set of $18$ polynomial inequalities of degree $4$ 
such that a copy of $\Delta$ can cover $\Delta'$ if and only if at least one of these inequalities are satisfied.
The key geometric component of Post's solution is the following.
\begin{lem}[{Post~\cite{Post}}]\label{tPost}
If a triangle $\Delta$ can be moved to a position where it covers another triangle $\Delta'$, then one can also find a covering position of $\Delta$ with a side that contains one side of $\Delta'$.
\end{lem}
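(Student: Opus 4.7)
The goal is to produce a rigid motion of $\Delta$ that places two vertices of $\Delta'$ on a common side of $\Delta$. Once this is achieved, the segment joining these two vertices---a full side of $\Delta'$---is contained in $\Delta$ by convexity, and it also lies on the line supporting that side of $\Delta$; hence it lies within the side segment itself, which is the desired conclusion. So the whole task reduces to engineering a covering position with this coincidence.

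I would reach such a position by a continuous sliding argument that progressively creates contacts between vertices of $\Delta'$ and sides of $\Delta$, while maintaining coverage throughout. First, starting from any covering position, the set of admissible translations of $\Delta$ forms a bounded convex polygon cut out by three linear inequalities, one per side of $\Delta$ (each expressing that the side not be pushed past the closest vertex of $\Delta'$); translating to its boundary yields a first contact $A' \in a$, where $A'$ is a vertex of $\Delta'$ and $a$ a side of $\Delta$. Second, I rotate $\Delta$ about the pivot $A'$, which preserves this contact since the rotated side $a$ still passes through $A'$; the admissible rotation angles form an interval, and its endpoint creates a second contact. If this places another vertex of $\Delta'$ on $a$, we are already done. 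Otherwise some vertex $B'$ lands on a different side $b$ of $\Delta$. Third, the remaining one-parameter family of rigid motions preserving both $A' \in a$ and $B' \in b$ is governed by the inscribed angle theorem: the corner $P = a \cap b$ subtends a constant angle at the fixed chord $A'B'$, so $P$ moves along a circular arc. I would slide along this arc until a third contact appears. The favorable outcome is that the remaining vertex $C'$ of $\Delta'$ lands on $a$ or on $b$, in which case two vertices of $\Delta'$ share a side of $\Delta$ and we are done.

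The main obstacle is the residual case in which the motion along the arc first places $C'$ on the third side $c$, yielding the rigid three-contact configuration $A' \in a,\ B' \in b,\ C' \in c$ with the three vertices of $\Delta'$ on three distinct sides of $\Delta$. To overcome this I would exploit two sources of flexibility that the argument has not yet used: the one-parameter arc in the third step can be traversed in \emph{both} directions, and the initial pivot in the second step was not canonical---any of the three vertices of $\Delta'$, placed on any of the three sides of $\Delta$, could have served. A careful case analysis should show that in at least one variant of the procedure the motion terminates either by placing two vertices of $\Delta'$ on a common side of $\Delta$ (and we are done), or by driving a vertex of $\Delta'$ to coincide with a vertex of $\Delta$, from which a short additional slide along one of the two sides of $\Delta$ incident to that coincident vertex forces the desired side-on-side incidence. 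Ruling out a pathological configuration in which every variant of the procedure stalls at three single-sided contacts on three distinct sides is the delicate technical core of the argument.
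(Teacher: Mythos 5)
The paper does not actually prove this lemma: it is quoted verbatim from Post's 1993 paper and used as a black box (the only related statement the appendix addresses, Claim A.1(A) on maximal similar copies, is likewise attributed to Post). So your attempt can only be judged on its own merits, and as written it has a genuine gap. The opening reduction (two vertices of $\Delta'$ on one side of $\Delta$ suffices, since the line of a side meets $\Delta$ exactly in that side) is correct, and the first two contact-creation steps (translate to a first vertex--side contact, rotate about that vertex to a second contact; the pivot never exits, so the blocking vertex is a new one) can be made rigorous. The problem is that the case you defer --- the procedure terminating with $A'\in a$, $B'\in b$, $C'\in c$ on three distinct sides --- is not a residual nuisance but the entire content of the lemma, and your proposed escapes do not work. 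That configuration is rigid under the three-dimensional group of rigid motions (three independent contact constraints), so there is no arc left to traverse ``in both directions''; and restarting the greedy procedure from a different pivot or side is a heuristic with no reason to avoid funnelling into the same (or a congruent) terminal three-contact position. The auxiliary fix for the vertex-on-vertex degeneracy (``a short additional slide \dots forces the desired side-on-side incidence'') is also unjustified: a vertex of $\Delta'$ sitting at a corner of $\Delta$ gives no side incidence, and sliding along either incident side is again just a rigid motion subject to the same obstruction.

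The missing idea is to leave the rigid-motion group and work with similarities. The standard route (and, in essence, Post's) is: let $\lambda^*\ge 1$ be the largest scale for which $\lambda\Delta'$ fits in $\Delta$ (it exists by compactness and is at least $1$ by hypothesis); prove that the \emph{maximal} inscribed similar copy must have two vertices on one side of $\Delta$, because a configuration with one single contact per side leaves a scale-increasing direction free in the four-parameter similarity group (three constraints in a four-dimensional group leave a curve along which one checks the scale is not locally maximal); then contract the maximal copy by a homothety centred at a point of the shared side down to the original size of $\Delta'$, which preserves both containment and the side-on-side incidence. Without the scaling degree of freedom, your argument has no mechanism to rule out stalling at the three-contact configuration, which is precisely the ``delicate technical core'' you acknowledge but do not supply.
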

Results of this kind help us to reduce the number of configurations to consider, and are of both theoretical and practical interest.
\subsection{Optimal covers from a class}  
In the present paper, we study a variant of the covering problem where the body $C$ (or $C'$) is not fixed, but can be chosen from a family of possible objects and we want to find a solution which is in some sense optimal, for example, has minimum area or perimeter.

Several classical problems in geometry can be viewed as covering problems of this kind: finding an optimal enclosing triangle, polygon, or
ellipse (L\"owner-John ellipse) for a given input set  \cite{BC,BoD,D,E,Fe,I,JW,R1,R2} as well as their higher dimensional analogues (that is, simplices, polytopes, ellipsoids,  \cite{OR, K, We}). Apart from their theoretical interest, these problems have found applications in various areas of computer science and mathematics (optimization,
packing and covering, approximation algorithms, convexity, computational
geometry), see \cite{GLS, KY, Sc}. 
%as well as in mathematics (functional analysis, number theory); see 
In the past decade, several explicit algorithms were proposed for the case of triangles \cite{BC, parvu, van, se}.\\
%, and for optimal area triangulations, see \cite{ke}.
% Finding optimal circumscribing and inscribed simplices, ellipsoids, polytopes, etc., are fundamental questions in optimization, functional analysis, and number theory; see e.g. \cite{GLS, KY, K,  Sc, We}.
\smallskip

\noindent
Nandakumar~\cite{N, N2, NandakumarBlog}
% %\footnote{See also: \url{http://nandacumar.blogspot.com/2017/12/};  \url{http://nandacumar.blogspot.com/2018/10/}; 
% \url{http://nandacumar.blogspot.com/2019/10/};  \url{http://nandacumar.blogspot.com/2020/02/};  %(http://nandacumar.blogspot.com/2020/04/) %(http://nandacumar.blogspot.com/2020/08/) 
%  \url{http://nandacumar.blogspot.com/2021/03/};  %(http://nandacumar.blogspot.com/2021/04/) 
%  \url{http://nandacumar.blogspot.com/2021/11/}} 
 raised the following two special instances of the above question: {\em given a triangle $\Delta$, determine the minimum area and the minimum perimeter isosceles
triangles that contain $\Delta$.} In what follows, we answer these questions, together with their `dual' versions:
 \emph{given a triangle $\Delta$, determine the maximum area and the maximum perimeter isosceles
triangles embedded (i.e., contained) in $\Delta$.}\\

\noindent
The case of minimum area isosceles containers has been recently studied by Kiss, Pach, and Somlai \cite{KPS2020}: they described all isosceles containers of a given triangle $\Delta$ for which the minimum is attained. Here, we complete the picture: we characterize the optimal solutions of the other three problems stated above. 
We will conclude that for three of the above problems, the optimum is attained for a `trivial' configuration, where the two triangles share a side and an angle at one end of this side.

\begin{thm} \label{tfo}
    Let $\Delta$ be a triangle in $\RR^2$ and~\vspace{-.5em}
\begin{enumerate}[(i)]
\setlength{\itemsep}{1pt}
  \setlength{\parskip}{1pt}
      \item \label{mainthm:i}let $\Delta''\supseteq \Delta$ be a \emph{minimum area isosceles container} of $\Delta$. 
      Then $\Delta''$ and $\Delta$ have a side in common and at one endpoint of this side they also have the same angle \cite{KPS2020};
        \item\label{mainthm:ii} let $\Delta'\subseteq \Delta$ be a \emph{maximum area embedded isosceles triangle} in $\Delta$. Then 
$\Delta'$ and $\Delta$ have a side in common and at one endpoint of this side they also have the same angle;
\item\label{mainthm:iii} let $\Delta'\subseteq \Delta$ be a \emph{maximum perimeter embedded isosceles triangle} in $\Delta$. 
Then $\Delta'$ and $\Delta$ have a side in common and at one endpoint of this side they also have the same angle.
\end{enumerate}
\end{thm}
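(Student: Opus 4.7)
My plan for both (ii) and (iii) is a perturbation argument by contradiction. Let $\Delta'=ABC\subseteq\Delta$ be optimal, with apex $A$ and $|AB|=|AC|$, and suppose $\Delta'$ does not share a side and an angle with $\Delta$. I aim to exhibit a smooth one-parameter family $\Delta'_t$ of embedded isosceles triangles with strictly larger area (respectively perimeter) for small $t>0$, contradicting maximality.

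The first reduction is to force every vertex of $\Delta'$ onto $\partial\Delta$: an interior vertex provides two free dimensions of motion which, even after imposing the single isosceles constraint $|AB|=|AC|$, leave room for a direction along which both area and perimeter have nonzero first derivative (for example, the apex $A$ can be moved along the perpendicular bisector of $BC$, which automatically preserves the isosceles property and strictly increases both functionals while the base stays fixed). The remaining configurations then split by the combinatorial incidence pattern of $\Delta'$ with $\partial\Delta$: either (A) some side of $\Delta'$ is flush with a side of $\Delta$, or (B) the three vertices of $\Delta'$ lie in the relative interiors of three distinct sides of $\Delta$ with no side-flushness.

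In scenario (A), I would slide the free vertex along its supporting side of $\Delta$ while adjusting the base to preserve the isosceles condition; the derivative of area (resp.\ perimeter) along this one-parameter motion vanishes only when the flush side of $\Delta'$ extends to a full side of $\Delta$ and one of its endpoints coincides with a vertex of $\Delta$, which is precisely the side-and-angle-sharing configuration. In scenario (B), the three boundary-incidence constraints on the five-parameter family of isosceles triangles leave a two-dimensional tangent space of admissible perturbations; a natural basis is (a) a symmetric slide of $B$ and $C$ along their supporting sides, compensated by a translation of $A$ that restores $|AB|=|AC|$, and (b) a small rotation of $\Delta'$ about one of its vertices. I would compute the first-order variations of area $\tfrac12|AB||AC|\sin\angle A$ and of $|AB|+|AC|+|BC|$ along this basis using elementary trigonometry; requiring both variations to vanish gives two trigonometric relations between the angles that $\Delta'$ makes with $\partial\Delta$, and a short check shows that these relations, together with the isosceles identity and the incidence pattern of (B), admit no common solution — forcing us back into scenario (A) and hence into side-and-angle sharing.

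The main obstacle I expect is twofold. First, the case analysis fans out according to which sides of $\Delta$ support the vertices of $\Delta'$, whether the apex or a base vertex sits at a corner of $\Delta$, and whether the two base vertices are symmetric or asymmetric with respect to the sides of $\Delta$ they meet; each subcase yields its own pair of trigonometric identities. Second, parts (ii) and (iii) must be run in parallel but separately, because the analogous minimum-perimeter \emph{enclosing} problem mentioned in the abstract genuinely fails, so any uniform argument covering all four variants at once would be suspect. This signals that the perimeter variant in (iii) is likely the most delicate: the trigonometric identities controlling first-order perimeter optimality must be different from — and presumably sharper than — those for area, and it is precisely this sharpness that I expect to demand the most care.
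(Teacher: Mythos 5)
Your overall plan (push every vertex of $\Delta'$ to $\partial\Delta$, then kill each remaining incidence pattern by an admissible perturbation) is the same in spirit as the paper's, which also proceeds by a finite case analysis and exhibits, for each non-special configuration, an embedded isosceles triangle of strictly larger area or perimeter. But the load-bearing steps of your proposal are asserted rather than argued, and at least one of them fails. In your scenario (B) for part (iii) you claim that requiring the first variations of area and perimeter to vanish yields trigonometric relations with ``no common solution.'' For the perimeter this is not so: when the apex of $\Delta'$ sits at a vertex of $\Delta$ and the other two vertices slide along sides of $\Delta$ subject to the isosceles constraint, the first variation of the perimeter along this one-parameter family can vanish at a non-special configuration. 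The paper therefore has to invoke second-order information: it shows that the relevant length function $f(x)=|X_xY_x|$ is strictly convex on the admissible interval (computing $f''(x)=-1/x^3>0$ there, and an analogous concavity $p_2''<0$ in the neighbouring subcase), so that $\tfrac12\bigl(f(x'-\delta)+f(x'+\delta)\bigr)>f(x')$, and then converts this into a perimeter comparison via the Minkowski-mean lemma (the perimeter of $\tfrac12(K_1+K_2)$ equals the average of the perimeters, and the mean of two non-homothetic triangles strictly contains $\Delta'$). A purely first-order scheme cannot reach this conclusion, so the contradiction you plan to derive in (iii) would not materialize.

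A second, smaller gap: in every surviving case the optimal $\Delta'$ shares exactly one vertex with $\Delta$, so the admissible perturbations form a cone rather than a linear space, and first-order optimality yields one-sided inequalities, not the two equalities you propose to intersect. The paper avoids this by making global, not infinitesimal, comparisons with explicitly constructed special triangles ($AB'C$, $A'BC$, $ABC_1$, and so on) --- for instance $\area(XYZ)=\area(ABC_1)\cdot\frac{c^2-|AX|^2}{c^2}<\area(ABC_1)$ in one subcase --- together with rotations about the common vertex justified by the Hinge theorem. You would also need the paper's preliminary reductions (Post's lemma forcing a side of $\Delta'$ to lie in a side of $\Delta$, and the lemma that each arc of $\partial\Delta$ between midpoints carries exactly one vertex of $\Delta'$) to make your case list finite and to ensure that your two perturbation directions in scenario (B) actually exhaust the tangent space.
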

\noindent
Somewhat surprisingly, the analogous statement is false for minimum perimeter containers.
\begin{thm}\label{thm:main-counterexamples}
There are infinite families of triangles $\Delta$ such that none of their minimum perimeter isosceles containers shares a side with $\Delta$ (and an angle at the end of this side). 

We describe $5$ different types of isosceles containers such that any triangle $\Delta$ has a minimum perimeter isosceles container $\Delta'$ belonging to one of these types.
Only $3$ out of these types will have the property that $\Delta$ and $\Delta'$ share a side and at one of the endpoints of this side they also have the same angle.%, see Fig. \ref{fig_ex}).  %for each of these triangles there are two possible configurations where the optimum can be attained (see \Cref{fig_ex} for an illustration and \Cref{sec:min-perim} for a precise description). \red{szerintem a masodik allitas nem igaz, mert nem mindhez van ketto.}
\end{thm}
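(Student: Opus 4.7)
The plan breaks into three steps: reducing the problem to a finite list of contact patterns between $\Delta$ and any minimum perimeter isosceles container $\Delta'$, identifying the first-order optimality condition within each pattern, and then exhibiting an explicit one-parameter family of triangles whose minimum perimeter containers are all of non-trivial type.

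For the reduction, the key observation is that in any minimum perimeter container every side of $\Delta'$ must be in contact with $\Delta$: otherwise that side could be translated inward while preserving the isosceles constraint (by simultaneously adjusting the symmetric side when necessary), strictly decreasing the perimeter. Since $\partial \Delta$ consists of three straight segments, each side of $\Delta'$ either contains a full side of $\Delta$, passes through a single vertex of $\Delta$ in its interior, or passes through a vertex of $\Delta$ coincident with a corner of $\Delta'$. Taking into account the choice of which side of $\Delta'$ plays the role of the base, this leaves only finitely many combinatorial configurations.

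Within each configuration, I would parametrise $\Delta'$ by one or two scalars (say, the apex angle and the orientation of the base) and write the perimeter as an analytic function of these parameters subject to the containment and isosceles constraints. Interior critical points yield a stationarity equation that characterises the configuration. When $\Delta$ shares a full side with $\Delta'$, the remaining degree of freedom forces an angle match at one endpoint of that shared side, producing three trivial types (one for each side of $\Delta$). The two additional types arise from genuinely non-trivial critical configurations, for instance one where a vertex of $\Delta$ coincides with the apex of $\Delta'$ while its opposite side lies skew to the base, and one where all three sides of $\Delta'$ touch $\Delta$ only at distinct interior vertices, the optimality condition prescribing a balance between the induced trigonometric lever arms. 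Boundary configurations in parameter space simply collapse into a neighbouring type, so they add nothing new.

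To construct the counterexample family, I would take a smooth one-parameter family $\Delta_\theta$ of scalene triangles. For each of the three trivial types there is at most one candidate container, determined by which side of $\Delta$ is shared and at which endpoint the angle is matched, and its perimeter is an explicit analytic function of $\theta$. The aim is to exhibit a $\theta_0$ at which some non-trivial candidate has strictly smaller perimeter than \emph{every} trivial one; continuity then extends the inequality to an open neighbourhood, yielding an infinite family. The main obstacle is this final comparison: ruling out a \emph{global} minimum of trivial type requires defeating all three trivial candidates simultaneously, which calls for delicate trigonometric estimates, and the family must be kept away from symmetric degenerations (such as isosceles $\Delta_\theta$) at which several types collapse into one and the distinction becomes void. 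I expect that the cleanest route is to start at a specific well-chosen $\Delta_{\theta_0}$ where one non-trivial critical configuration can be compared against the three trivial candidates by hand, and then argue by continuity.
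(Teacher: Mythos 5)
Your overall strategy --- reduce to finitely many contact patterns, analyse each, then exhibit an explicit family and extend by continuity --- matches the paper's, but the taxonomy you propose for the non-trivial types is wrong in a way that would derail the case analysis. Both of your illustrative non-trivial configurations are in fact never optimal. If a vertex of $\Delta$ sits at the apex of the isosceles container $\Delta'$ without the angles matching, one can decrease the apex angle while keeping the two legs equal and still contain $\Delta$, strictly decreasing the perimeter; hence the shared vertex of an optimal non-special container is always a \emph{base} vertex. And a configuration in which the three sides of $\Delta'$ meet $\Delta$ only at distinct interior points is excluded outright: a Post-type lemma shows that any minimum perimeter container has a side containing two vertices of $\Delta$ and shares a vertex with $\Delta$. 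The two genuine non-trivial types are: the shared vertex is a base vertex $P$ of $\Delta'$ and the other two vertices of $\Delta$ lie in the interior of the opposite leg; and the shared vertex is $P$ with one vertex of $\Delta$ in the interior of the opposite leg and one in the interior of the base. Your enumeration also leaves unaddressed the pattern with one vertex of $\Delta$ on each leg, which is a bona fide configuration (not a boundary degeneration) and whose exclusion is the hardest step: the paper rules it out by an explicit computation showing the perimeter is strictly increasing along an admissible deformation. Saying that boundary configurations ``collapse into a neighbouring type'' does not substitute for this.

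Two further points. First, the three trivial types are not ``one for each side of $\Delta$'': among the nine special containers (three per side of $\Delta$), elementary perimeter inequalities eliminate all but three candidates, and two of the three survivors share the \emph{longest} side of $\Delta$. Second, your plan for the counterexample family is sound and is essentially what the paper does (a one-parameter family with an explicit numerical witness, extended by continuity), but the ``delicate trigonometric estimates'' you defer are genuinely needed: the comparison of the non-special candidates against the special container $AB\overline{C}$ is exactly what forces the parameter restrictions in the paper's two examples, and without carrying it out one cannot conclude that any non-trivial type is ever globally optimal.
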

Our paper is organized as follows. In \Cref{sec:prelim}, we fix the notation and list some easy preliminary statements. In
\Cref{sec:max-area} and \Cref{sec:max-perim}, we present the proofs of \Cref{tfo}(ii) and  \Cref{tfo}(iii), respectively. Finally, \Cref{sec:min-perim} is dedicated to the description of the $5$ types of isosceles containers mentioned in \Cref{thm:main-counterexamples} and to the proof of this result. \\
%to the description of the non-special minimal perimeter isosceles containers.

We are grateful for Ilya I. Bogdanov (MIPT) for his valuable remarks.

\section{Preliminaries and notation}\label{sec:prelim}
In this section, we introduce the notation used in this note and state three easy lemmas (Lemmas \ref{lem:01}-\ref{lem:ie2} and \ref{lem:ie1}). Their straightforward proofs are given in the Appendix.
\begin{lem}\label{lem:01}
Let $\Delta_1$ and $\Delta_2$ be two triangles.
\begin{enumerate}[(i)]
\item 
Any maximum area (resp. perimeter) similar copy $\Delta_1'\subseteq \Delta_2$ of $\Delta_1$ satisfies the following properties: ~\vspace{-.5em}
\begin{enumerate}[(a)]
 \setlength{\itemsep}{1pt}
  \setlength{\parskip}{1pt}
\item there is a side of $\Delta_2$ that contains a side of $\Delta_1'$;
\item every side of $\Delta_2$ contains a vertex of $\Delta_1'$;
\item $\Delta_1'$ and $\Delta_2$ have a common vertex.
\end{enumerate}
\item
Any minimum area (resp. perimeter) similar copy $\Delta_1' \supseteq\Delta_2$ of $\Delta_1$ satisfies the following properties: ~\vspace{-.5em}
\begin{enumerate}[(a)]
 \setlength{\itemsep}{1pt}
  \setlength{\parskip}{1pt}
\item there is a side of $\Delta_1'$ that contains two vertices of $\Delta_2$;
\item every side of $\Delta_1'$ contains a vertex of $\Delta_2$;
\item $\Delta_1'$ and $\Delta_2$ have a common vertex.
\end{enumerate}
\end{enumerate}
\end{lem}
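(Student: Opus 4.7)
The plan is to establish all six properties by a uniform perturbation argument: if any stated property fails, I would modify $\Delta_1'$ locally to obtain a similar copy of $\Delta_1$ with strictly larger scale factor (for part (i)) or strictly smaller scale factor (for part (ii)) still satisfying the containment, contradicting extremality. Because both area and perimeter are strictly monotone increasing in the scale factor, this single scheme would handle the area and perimeter versions simultaneously.

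The main building block would be property (i)(b). Suppose a side $s$ of $\Delta_2$ contains no vertex of $\Delta_1'$, and let $A$ be the opposite vertex of $\Delta_2$; compactness gives $d := \dist(\Delta_1', s) > 0$. I would consider the homothety $h_\eta$ centered at $A$ with ratio $1-\eta$: its image $h_\eta(\Delta_2)$ is a similar triangle whose two sides through $A$ are contained in the corresponding sides of $\Delta_2$, while its third side is parallel to $s$ and displaced a distance $\eta\cdot\dist(A,s)$ toward $A$. For $\eta > 0$ sufficiently small we still have $\Delta_1' \subseteq h_\eta(\Delta_2)$, so $h_\eta^{-1}(\Delta_1')$ is a similar copy of $\Delta_1$ contained in $\Delta_2$ with scale factor $(1-\eta)^{-1} > 1$, contradicting maximality. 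The symmetric homothety applied to $\Delta_1'$ itself (centered at the vertex of $\Delta_1'$ opposite the problematic side) would prove (ii)(b).

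For (i)(a), granted (b), no two vertices of $\Delta_1'$ may lie on the same side of $\Delta_2$, since otherwise the segment joining them, which is a side of $\Delta_1'$, would already lie on that side of $\Delta_2$, yielding (a). Hence the three vertices of $\Delta_1'$ are distributed one per side of $\Delta_2$. The four-dimensional space of orientation-preserving similarities of the plane, subject to these three incidence constraints, cuts out a one-parameter family of inscribed similar copies, and I would argue that within this family the scale factor is a non-constant smooth function of the parameter, so $\Delta_1'$ cannot sit at an interior maximum. At a boundary of the family some vertex of $\Delta_1'$ reaches a corner of $\Delta_2$, from which the family continues with an updated side assignment; iterating, the maximum is eventually attained in a configuration where a side of $\Delta_1'$ lies along a side of $\Delta_2$. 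Property (i)(c) would then follow from the analogous one-parameter argument restricted to those similar copies already carrying one side on $s$ and the opposite vertex on another side of $\Delta_2$: the scale is maximized at a boundary point where a vertex of $\Delta_1'$ coincides with a vertex of $\Delta_2$. Parts (ii)(a) and (ii)(c) are the mirror image of these, applied to circumscribed similar copies.

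The most delicate step will be verifying that the scale factor truly varies non-trivially along the one-parameter families used for (a) and (c); degeneracies in which the scale is locally constant correspond to very special geometric coincidences between $\Delta_1$ and $\Delta_2$, and would need to be excluded by a short direct computation in each case.
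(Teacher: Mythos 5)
Your homothety argument for (b) is correct and is essentially what the paper does: if a side of the outer triangle misses all vertices of the inner one, shrink the outer triangle towards the opposite vertex and rescale, contradicting extremality of the scale factor (to which both area and perimeter reduce; the paper phrases this via the diameter). The genuine gap is in (a). The paper does not prove (a) by a variational family at all: for the inscribed case it quotes Post's result (\Cref{tPost} and \cite{Post}), and it then obtains (c) from (a) and (b) by a two-line combinatorial step --- if the two collinear vertices of $\Delta_1'$ are interior to a side of $\Delta_2$, then by (b) the remaining two sides of $\Delta_2$ must each contain the third vertex of $\Delta_1'$, which is therefore a vertex of $\Delta_2$. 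Your route for (a), rotating the one-parameter family of directly similar inscribed copies with one vertex on each side, can in principle be completed, but the step you defer is the entire content of the argument, not a degeneracy to be excluded case by case: you must show that the scale is a strictly convex, non-constant function of the rotation parameter. The standard way to do this is via the pivot (Miquel) point $P$ of the family: every member is the image of a fixed one under a spiral similarity about $P$, the vertex on the line $\ell_i$ is obtained by projecting $P$ onto $\ell_i$ along a rotating direction, and the scale is proportional to $\sec(\theta-\theta_0)$. Without this (or an equivalent computation), your claim that the maximum cannot sit in the interior of the parameter interval is unsupported.

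Your endgame is also not right as described. At an endpoint of the admissible parameter interval some vertex of $\Delta_1'$ reaches a corner of $\Delta_2$; that corner lies on two sides of $\Delta_2$, one of which is the side carrying a \emph{different} vertex of $\Delta_1'$, so that side already contains two vertices of $\Delta_1'$ and (a) follows immediately --- no ``iteration with updated side assignments'' is needed, and it is not justified that such an iteration terminates or keeps the copy inside $\Delta_2$. The family you propose for (c) is likewise problematic: once a side of $\Delta_1'$ lies in a side $s$ of $\Delta_2$, translating along $s$ with fixed orientation does not change the scale, so ``the scale is maximized at a boundary point'' has no force there; you should instead derive (c) combinatorially from (a) and (b) as the paper does. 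Finally, for the circumscribed statement (ii)(a) the covering form of Post's lemma is exactly what is needed, so citing it is both legitimate and far shorter than mirroring the rotation argument.
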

\noindent
Optimal isosceles enclosing and embedded triangles satisfy further properties.   
\begin{lem}\label{lem:m1} ~\vspace{-.5em}
\begin{enumerate}[(i)]
 \setlength{\itemsep}{1pt}
  \setlength{\parskip}{1pt}
  \item For every triangle $\Delta$ there exists a minimum area (resp. perimeter) isosceles container of $\Delta$, and a maximum area (resp. perimeter) isosceles triangle embedded in $\Delta$.
  \item If $\Delta_1$ is a maximum area (resp. perimeter) isosceles triangle embedded in $\Delta$, then every vertex of $\Delta_1$ lies on a side of $\Delta$.
  \item\label{lemitem:242} If $\Delta_2$ is a minimum area (resp. perimeter) isosceles container of $\Delta$, then every vertex of $\Delta$ lies on a side of $\Delta_2$.
  \end{enumerate}
\end{lem}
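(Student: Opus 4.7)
My plan is to deduce \Cref{lem:m1} from \Cref{lem:01} together with standard compactness arguments and some local perturbations.

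For \textbf{(i)}, the existence claims follow from compactness. The family of isosceles triangles in $\RR^2$ is cut out of $\RR^6$ (coordinates of three ordered vertices) by one polynomial equation; intersecting with the closed bounded condition that each vertex lies in $\Delta$ yields a compact feasible set on which the continuous functions area and perimeter attain their maxima, at non-degenerate triangles since $\Delta$ contains isosceles triangles of positive area. For the minimum container problem I fix one specific isosceles container $\Delta^*$ of $\Delta$ and restrict to candidates with area (resp.\ perimeter) no larger. Since any container of $\Delta$ has width in every direction at least the positive width of $\Delta$, a bound on area or perimeter yields a uniform bound on its vertex coordinates; the restricted set is compact and the minimum is attained.

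For \textbf{(ii)}, let $\Delta_1$ be a maximum area (resp.\ perimeter) isosceles triangle embedded in $\Delta$. Since every similar copy of $\Delta_1$ is isosceles of the same shape, $\Delta_1$ is in particular a maximum similar copy of itself inside $\Delta$. Applying \Cref{lem:01}(i) gives simultaneously: (a) some side $e$ of $\Delta$ contains a side $V_2V_3$ of $\Delta_1$; (b) every side of $\Delta$ contains a vertex of $\Delta_1$; (c) $\Delta_1$ and $\Delta$ share a vertex. Denote the third vertex of $\Delta_1$ by $V_1$. If the shared vertex in (c) is $V_1$, then $V_1$ is a corner of $\Delta$ and lies on two sides, while $V_2,V_3\in e$; the conclusion holds. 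If the shared vertex is $V_2$ (case $V_3$ symmetric) and $V_3$ is strictly interior to $e$, then $V_2$ covers $e$ and the side of $\Delta$ adjacent to $V_2$ other than $e$, and by (b) the remaining side of $\Delta$ contains $V_1$; done. The residual case is when both $V_2$ and $V_3$ are corners of $\Delta$, so $V_2V_3$ is a full side of $\Delta$ and (a)--(c) place no constraint on $V_1$.

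In this residual case I argue directly by cases on which vertex of $\Delta_1$ is its apex. If the apex is $V_1$, it lies on the perpendicular bisector of $V_2V_3$, and both area and perimeter are strictly monotone in the distance from $V_1$ to the line $V_2V_3$, so the maximum occurs where this bisector exits $\Delta$, giving $V_1\in\partial\Delta$. If the apex is $V_2$ (case $V_3$ symmetric), then $V_1$ lies on the circle $C$ of radius $|V_2V_3|$ around $V_2$, and the feasible positions of $V_1$ form arcs of $C\cap\Delta$ with endpoints on $\partial\Delta$. The perimeter $2|V_2V_3|(1+\sin(\theta/2))$, with $\theta=\angle V_1V_2V_3$, is strictly monotone in $\theta\in[0,\pi]$, so its maximum is at an arc endpoint, giving $V_1\in\partial\Delta$. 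The area $\tfrac12|V_2V_3|^2\sin\theta$ is maximized either at an arc endpoint, giving $V_1\in\partial\Delta$, or at the interior critical point $\theta=\pi/2$; the latter gives $V_1\in\operatorname{int}\Delta$ only when the interior angle of $\Delta$ at $V_2$ is obtuse. In that obtuse subcase, perturbing $V_2$ by a small amount along the $\Delta$-edge at the corner $V_2$ other than $e$ strictly increases $|V_2V_3|$ to first order (because the relevant cosine is negative), while the corresponding right-angle isosceles configuration stays inside $\Delta$ (since $V_1$ was in the interior); this strictly increases the area, contradicting the maximality of $\Delta_1$.

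Part \textbf{(iii)} follows by the mirror argument applied to \Cref{lem:01}(ii). The analogous residual case, where $\Delta$ and $\Delta_2$ share a full side $e$, is ruled out by shrinking $\Delta_2$ while preserving its isosceles type and the containment $\Delta\subseteq\Delta_2$: if $\Delta_2$ has its apex opposite $e$, move that apex toward $e$ along the perpendicular bisector; if $\Delta_2$'s apex is at an endpoint of $e$, decrease the angle at that endpoint between the two equal sides of $\Delta_2$. The main obstacle across parts (ii) and (iii) is this residual configuration, where \Cref{lem:01} is silent about the third vertex and a careful local perturbation exploiting the isosceles constraint and the local angle of $\Delta$ at a corner is required.
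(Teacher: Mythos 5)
Your overall route is the same as the paper's: a compactness argument for (i), and for (ii)--(iii) an application of \Cref{lem:01} followed by an analysis of the residual configuration in which the two triangles share a full side while the third vertex sits in the open interior, which you eliminate by local perturbations. The paper does exactly this, shrinking the container in the shared-side case and declaring the embedded case ``analogous''; your explicit treatment of the obtuse-corner subcase of (ii), where the apex-angle-$\pi/2$ critical point can place the third vertex in the interior and one must slide the apex off the corner of $\Delta$, is a genuine piece of detail the paper glosses over, and it is correct.

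There is, however, one step that fails as written. In the residual case of (iii), when the shared side $e$ is a \emph{leg} of $\Delta_2$, you shrink $\Delta_2$ by decreasing the apex angle $\gamma$ at the endpoint of $e$. With the legs of length $L$ fixed this does decrease the perimeter (the base $2L\sin(\gamma/2)$ shrinks), but the area equals $\tfrac{1}{2}L^2\sin\gamma$, which \emph{increases} when $\gamma$ is obtuse and is pushed toward $90^{\circ}$. So for the minimum-area half of (iii) your perturbation produces no contradiction in the obtuse-apex subcase. The fix is a one-liner: when $\gamma>90^{\circ}$, rotate the free leg \emph{away} from $e$ instead; the two endpoints of $e$ do not move, the interior third vertex of $\Delta$ remains inside for a small enough rotation (it lies at positive distance from all three bounding lines), and $\sin\gamma$ strictly decreases. (The paper sidesteps this point by citing \cite{KPS2020} for the area statement of (iii) and carrying out its shrinking argument only for the perimeter.) Everything else in your argument is sound.
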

For any two points, $A$ and $B$, let $AB$ denote the closed segment connecting them, and let $|AB|$ stand for the length of $AB$. To unify the presentation, in the sequel we fix a triangle $ABC$ with side lengths $a=|BC|$, $b=|AC|$, $c=|AB|$. If two sides are of the same length, then $ABC$ is the unique minimum area and perimeter isosceles container (and also maximum area and perimeter embedded isosceles triangle) of itself. Therefore without loss of generality, we assume that $a<b<c$.
\subsection{Special embedded isosceles triangles}\label{sec:special-embedded}
Given a triangle $ABC$, we describe its special embedded isosceles triangles, that is, all those isosceles triangles contained in $ABC$ that have a common side with $ABC$ and share an angle at one of the endpoints of the common side. Recall that these triangles play a distinguished role in \Cref{tfo}.

\paragraph{Special embedded triangles of the first kind.} Let $A'$ be a point of $AC$ with $|A'C|=|BC|$ and let $B'$ and $A''$ be two points of $AB$ such that $|AB'|=|AC|$ and $|A''B|=|BC|$ (see \Cref{f1}). We say that $A'BC$, $AB'C$, and $A''BC$ are the {\it special embedded triangles of the first kind} associated with $ABC$.
\begin{figure}[H]
    \centering
    \includegraphics[width=\textwidth]{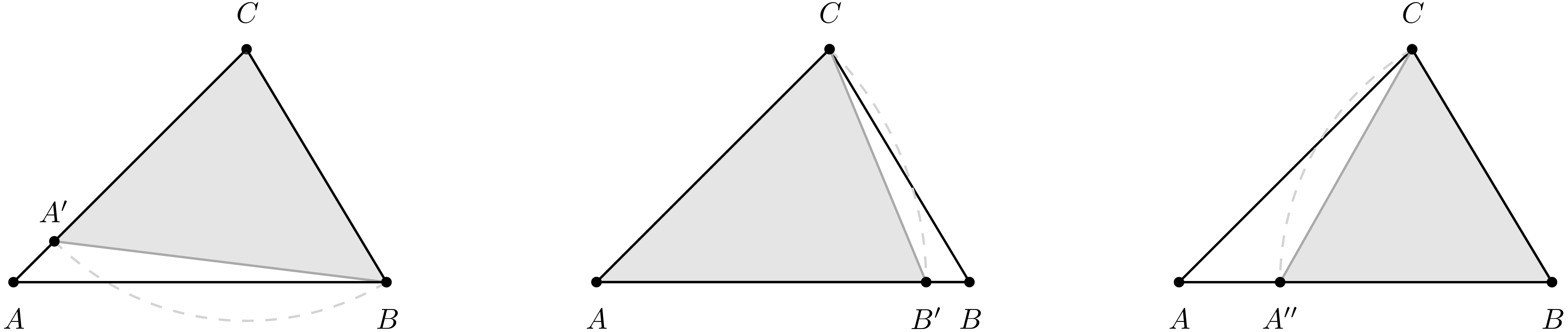}
    \caption{Special embedded triangles of the first kind.}
    \label{f1}
\end{figure}
\paragraph{Special embedded triangles of the second kind.} Let $C_1$ be the intersection of the perpendicular bisector of $AB$ and the segment $AC$. Analogously, let $A_1$ be the intersection of the perpendicular bisector of $BC$ and $AC$, and let $B_1$ be the intersection of the perpendicular bisector of $BC$ and the line $AC$  (see \Cref{f2}). The triangles $A_1BC$, $AB_1C$, and $ABC_1$ are the {\it special embedded triangles of the second kind} associated with $ABC$. 
\begin{figure}[H]
\begin{center}
 \includegraphics[width=\textwidth]{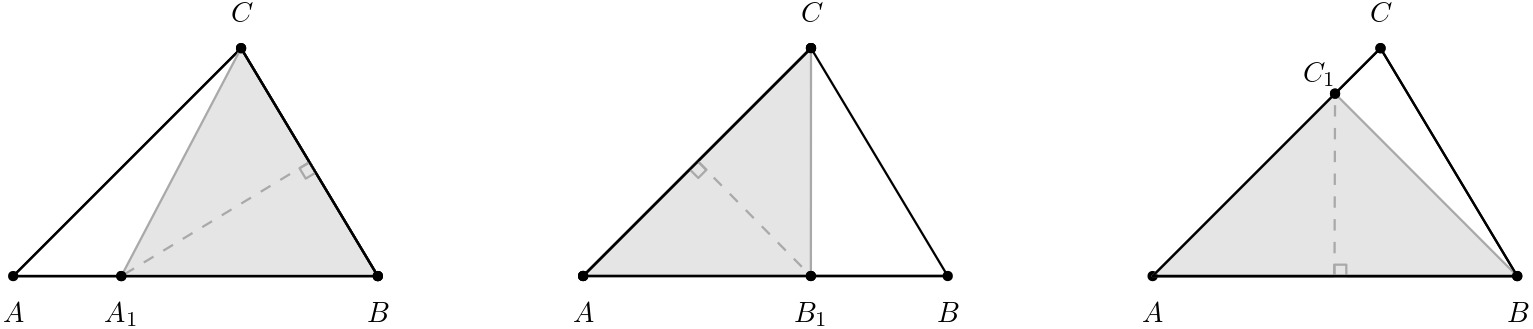}
\end{center}
 \caption{Special embedded triangles of the second kind.}
  \label{f2}
\end{figure}
\paragraph{Special embedded triangles of the third kind.} Let $\overline{A}$ be a point of  $AB$, where $|\overline{A}C|=|BC|$. Analogously, let $\overline{\overline{A}} \in AC$, and $\overline{B} \in BC$ such that $|\overline{\overline{A}}B|=|BC|$, and $|\overline{B}A|=|AC|$ (see \Cref{f3}). Note that if $ABC$ is non-acute, then $\overline{\overline{A}}BC$ and $A\overline{B}C$ do not exist.  $\overline{A}BC$, $\overline{\overline{A}}BC$, and $A\overline{B}C$  are called the {\it special embedded triangles of the third kind} associated with $ABC$. 

\begin{figure}[H]
\begin{center}
 \includegraphics[width=\textwidth]{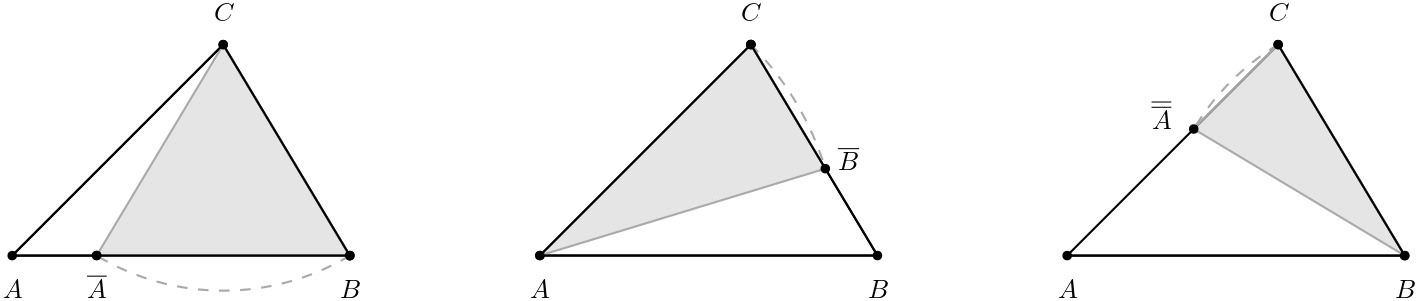}
\end{center}
 \caption{Special embedded triangles of the third kind.}
 \label{f3}
\end{figure}
\subsubsection{Basic inequalities for special embedded triangles.}
We collect a few inequalities on the area and perimeter of special isosceles embedded triangles. 
For a triangle $\Delta$, let  $\perim(\Delta)$ and $\area(\Delta)$ denote the perimeter and the area of ${\Delta}$, respectively.
% The bounds below easily follow from the assumption $a < b < c$.
\begin{lem}\label{lem:ie2}If $ABC$ satisfies $a<b<c$, then\vspace{-.5em}
\begin{enumerate}[(i)]
 \setlength{\itemsep}{1pt}
  \setlength{\parskip}{1pt}
\item $\ar{A''BC}< \ar{A'BC}$;
\item $\ar{A_1BC}<\ar{AB'C}$ and $\ar{AB_1C}<\ar{ABC_1}$;
\item $\ar{\overline{A}BC}<\ar{ABC_1}$, $\ar{\overline{\overline{A}}BC}<\ar{A\overline{B}C}$, \newline and $\ar{A\overline{B}C}<\ar{AB'C}$;
\item if $ABC$ is obtuse, then  $\ar{A'BC}<\ar{ABC_1}$.
\end{enumerate}
\end{lem}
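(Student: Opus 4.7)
The plan is to compute each area in closed form, exploiting the isosceles structure of every special embedded triangle, and then reduce each comparison in (i)--(iv) to a trigonometric inequality in the angles $\alpha<\beta<\gamma$ of $ABC$ (the ordering following from $a<b<c$).

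First, I record the area formulas I will need. A first-kind triangle such as $A''BC$ has two sides of known length meeting at an angle inherited from $ABC$, so
\[
\ar{A''BC}=\tfrac12 a^2\sin\beta,\quad \ar{A'BC}=\tfrac12 a^2\sin\gamma,\quad \ar{AB'C}=\tfrac12 b^2\sin\alpha.
\]
For a second-kind triangle, placing the apex on the perpendicular bisector of the base and solving the isosceles condition via the law of cosines (with $|A_1C|=a/(2\cos\gamma)$, $|C_1A|=c/(2\cos\alpha)$, etc.) yields
\[
\ar{A_1BC}=\tfrac{a^2|\tan\gamma|}{4},\quad \ar{ABC_1}=\tfrac{c^2\tan\alpha}{4},
\]
and analogously for $AB_1C$. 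A third-kind triangle such as $\overline{A}BC$ is isosceles with two inherited sides of length $a$ and apex angle $\pi-2\beta$, hence $\ar{\overline{A}BC}=\tfrac12 a^2\sin(2\beta)$; similarly $\ar{\overline{\overline{A}}BC}=\tfrac12 a^2\sin(2\gamma)$ and $\ar{A\overline{B}C}=\tfrac12 b^2\sin(2\gamma)$, the latter two being defined only when $\gamma<\pi/2$.

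With these formulas in hand, each inequality becomes a trigonometric comparison. For (i), we need $\sin\beta<\sin\gamma$: since $\beta<\gamma$ and $\beta<\pi-\gamma$ (the latter because $\alpha>0$), the value $\beta$ lies below both $\gamma$ and its supplement, which suffices by monotonicity of $\sin$ on $[0,\pi/2]$. For (ii)--(iv), I apply the law of sines $a=2R\sin\alpha$, $b=2R\sin\beta$, $c=2R\sin\gamma$ to eliminate the side lengths. For instance, $\ar{A'BC}<\ar{ABC_1}$ reduces to $\sin(2\alpha)<\sin\gamma$, which in the obtuse case $\gamma>\pi/2$ follows from $2\alpha<\alpha+\beta=\pi-\gamma<\pi/2$; and $\ar{\overline{A}BC}<\ar{ABC_1}$ reduces, via product-to-sum, to $\cos(2(\alpha-\beta))<1$, immediate from $\alpha\neq\beta$. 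The middle inequality of (iii) is simply $a^2<b^2$, and the remaining comparisons follow the same recipe (e.g.\ $\ar{A\overline{B}C}<\ar{AB'C}$ becomes $\sin(2\gamma)<\sin\alpha$, which holds because $\pi-2\gamma=\alpha+\beta-\gamma<\alpha$ with both sides in $(0,\pi/2)$).

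The main obstacle is bookkeeping rather than depth: keeping track of which third-kind triangles are defined only when $ABC$ is acute, handling the absolute values that arise in the second-kind formulas when $\gamma$ is obtuse, and picking the cleanest angular substitution (sometimes $\gamma=\pi-\alpha-\beta$, sometimes not) for each reduction. Particular care is needed in (iv) so as not to silently assume $\gamma<\pi/2$ where the hypothesis is the opposite.
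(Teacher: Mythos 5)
Your overall strategy --- closed-form areas for each special embedded triangle, then a reduction to angle inequalities in $\alpha<\beta<\gamma$ via the law of sines --- is exactly the paper's, and almost all of your formulas and reductions check out. Parts (i), (iii) and (iv) are correct as you sketch them; in fact your product-to-sum treatment of $\ar{\overline{A}BC}<\ar{ABC_1}$, reducing it to $\cos(2(\alpha-\beta))<1$, is cleaner than the paper's route, which goes through the law of cosines and terminates at the equivalent inequality $(c^2+(a^2-b^2))(c^2-(a^2-b^2))<c^4$.

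The genuine gap is in part (ii). Your formula $\ar{A_1BC}=\frac{a^2|\tan\gamma|}{4}$, obtained by placing the apex on the line $AC$ with $|A_1C|=a/(2\cos\gamma)$, describes the wrong triangle, and with it the claimed inequality $\ar{A_1BC}<\ar{AB'C}$ is simply false: for $\gamma$ close to $90^{\circ}$ (say $\alpha=44^{\circ}$, $\beta=47^{\circ}$, $\gamma=89^{\circ}$) the left-hand side blows up like $\tan\gamma$ while the right-hand side stays bounded. The underlying point is that the intersection of the perpendicular bisector of $BC$ with the line $AC$ lies at distance $a/(2\cos\gamma)$ from $C$, which exceeds $b$ precisely when $b<c$; so that apex falls outside the segment $AC$ and the resulting triangle is not embedded in $ABC$ at all. (The paper's defining sentence for $A_1$ and $B_1$ is itself garbled, but embeddability forces the intended object: $A_1$ is the intersection of the perpendicular bisector of $BC$ with the side $AB$, so $A_1BC$ has base $BC$ and base angle $\beta$ at $B$, whence $\ar{A_1BC}=\frac{a^2\tan\beta}{4}$, and likewise $\ar{AB_1C}=\frac{b^2\tan\alpha}{4}$ with $B_1\in AB$.) With the correct formulas no absolute values ever arise, since $\alpha$ and $\beta$ are always acute; the first inequality of (ii) reduces to $\sin\alpha<\sin(2\beta)$, which holds because $\alpha<2\beta$ and $\alpha<180^{\circ}-2\beta$ (the latter being exactly $\beta<\gamma$), and the second inequality is just $b<c$. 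Fix this one substitution and your proof is complete.
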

\noindent
\Cref{lem:ie2} imply that only 3 of the special embedded triangles of $ABC$ can be optimal.
\begin{cor}\label{cor:embarea}
If $ABC$ satisfies $a<b<c$, then any maximum area special embedded triangle of $ABC$ is one of the following triangles: $A'BC$, $AB'C$, $ABC_1$.
\end{cor}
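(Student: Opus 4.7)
The plan is essentially bookkeeping: enumerate all nine (or seven, in the non-acute case) special embedded isosceles triangles of $ABC$ and knock out six of them using the inequalities listed in \Cref{lem:ie2}. Nothing new needs to be proved; the entire work is an accounting step verifying that every candidate besides $A'BC$, $AB'C$, $ABC_1$ is dominated in area by one of these three.

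Concretely, I would proceed as follows. First, recall that the special embedded isosceles triangles associated with $ABC$ are $A'BC$, $AB'C$, $A''BC$ (first kind), $A_1BC$, $AB_1C$, $ABC_1$ (second kind), and $\overline{A}BC$, $\overline{\overline{A}}BC$, $A\overline{B}C$ (third kind, with the last two existing only when $ABC$ is acute). Next, I would apply the inequalities from \Cref{lem:ie2} one by one:
\begin{itemize}
\setlength{\itemsep}{1pt}
\item By \Cref{lem:ie2}(i), $\ar{A''BC} < \ar{A'BC}$, so $A''BC$ is eliminated.
\item By \Cref{lem:ie2}(ii), $\ar{A_1BC} < \ar{AB'C}$ and $\ar{AB_1C} < \ar{ABC_1}$, eliminating $A_1BC$ and $AB_1C$.
\item By \Cref{lem:ie2}(iii), $\ar{\overline{A}BC} < \ar{ABC_1}$, $\ar{\overline{\overline{A}}BC} < \ar{A\overline{B}C}$, and $\ar{A\overline{B}C} < \ar{AB'C}$, which eliminates $\overline{A}BC$, $\overline{\overline{A}}BC$, and $A\overline{B}C$ (the latter two only matter when $ABC$ is acute and they exist in the first place).
\end{itemize}

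After these eliminations, the only special embedded triangles that remain as candidates for maximizing the area are $A'BC$, $AB'C$, and $ABC_1$, which is exactly the claim. There is no real obstacle to overcome here, beyond confirming that the six chains of inequalities above cover all six candidates one wishes to discard; note that part (iv) of \Cref{lem:ie2} (comparing $A'BC$ and $ABC_1$ in the obtuse case) is not needed for this corollary, since the statement only asks for the set of possible maximizers rather than a unique one.
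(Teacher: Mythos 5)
Your proposal is correct and matches the paper's (implicit) argument exactly: the paper derives \Cref{cor:embarea} by precisely this elimination, using \Cref{lem:ie2}(i)--(iii) to dominate each of the six discarded candidates by one of $A'BC$, $AB'C$, $ABC_1$, with part (iv) indeed playing no role here. Your handling of the non-acute case, where $\overline{\overline{A}}BC$ and $A\overline{B}C$ simply do not exist, is also consistent with the paper.
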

\noindent 
We note that similar results hold for the perimeter function implying that any maximum perimeter special embedded triangle of  $ABC$ is $AB'C$, $A_1BC$, or $ABC_1$.
\subsection{Special enclosing isosceles triangles}
Given a triangle $ABC$, now we describe its special enclosing isosceles triangles, that is, all those isosceles triangles containing $ABC$ that have a common side with $ABC$ and share an angle at one of the endpoints of the common side. Recall that these triangles play a distinguished role in Theorems \ref{tfo} and \ref{thm:main-counterexamples}.
\paragraph{Special containers of the first kind.} Let $B'$ denote the point on the ray $\vec{CB}$, for which $|B'C|=|AC|$. Analogously, let $C'$ (and $C''$) denote the
points on $\vec{AC}$ (resp., $\vec{BC}$) such that $|AC'|=|AB|$ (resp., $|BC''|=|AB|$), see \Cref{fig2}.  We call the triangles $AB'C$, $ABC'$, and $ABC''$ {\it special containers of the first kind} associated with $ABC$.
\begin{figure}[!ht]
\begin{center}
\includegraphics[width=0.3\textwidth]{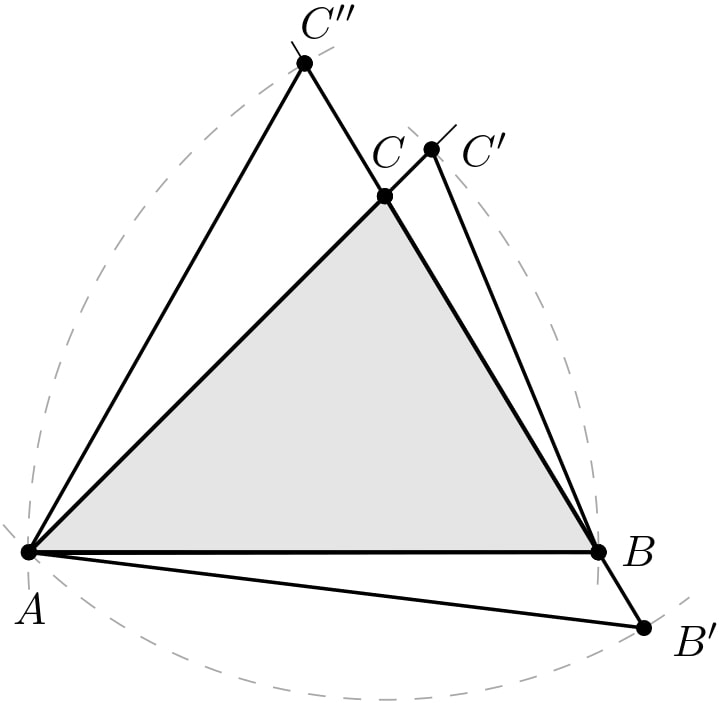}
 \hspace{2em}
 \includegraphics[width=0.32\textwidth]{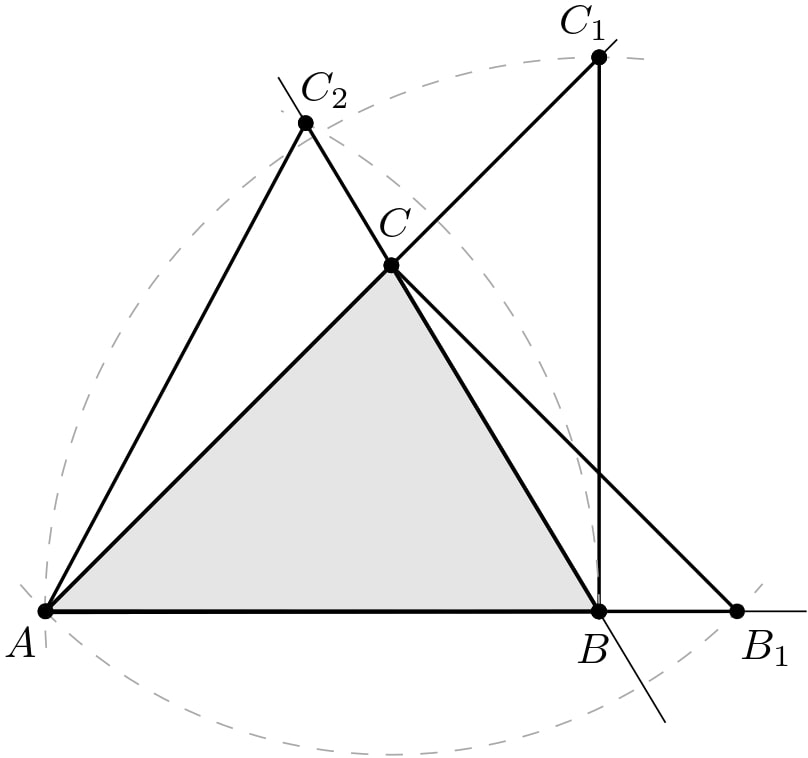}
 
\end{center}
  \caption{Special containers of the first kind ($AB'C, ABC'$, and $ABC''$) and second kind ($AB_1C$, $ABC_1$, and $ABC_2$).}
  \label{fig2}
\end{figure}
\paragraph{Special containers of the second kind.} Let $B_1$ denote the point on the ray $\vec{AB}$, different from $A$, for which $|B_1C|=|AC|$. Analogously, let $C_1$ (resp., $C_2$) denote the point on $\vec{AC}$ (resp., $\vec{BC}$) for which $|BC_1|=|AB|$ and $C_1\neq A$ (resp., $|AC_2|=|AB|$ and $C_2\neq B$), see \Cref{fig2}. The triangles $AB_1C$, $ABC_1$, and $ABC_2$ are called the {\it special containers of the second kind} associated with $ABC$.
\smallskip
\paragraph{Special containers of the third kind.} Let $\overline{A}$ be the intersection of the perpendicular bisector of $BC$ and the line $AC$. Since we have $b=|AC|<|AB|=c$, the point
$\overline{A}$ lies outside of $ABC$. Analogously, denote by $\overline{B}$ (resp., $\overline{C}$) the intersection of the perpendicular bisector of $AC$ (resp. $AB$) and the line $BC$. (If $ABC$ is non-acute $\overline{A}BC$ and $A\overline{B}C$ do not contain $ABC$ (\Cref{fig2.2}).) %Note that $\overline{A}BC$ and $A\overline{B}C$ do not contain $ABC$ if $\sa BCA\ge 90 ^{\circ}$. 
The triangles $\overline{A}BC$, $A\overline{B}C$, and $AB\overline{C}$ are called the {\it special
containers of the third kind} associated with $ABC$, provided that they contain $ABC$.
\begin{figure}[!ht]
\begin{center}
\includegraphics[width=0.6\textwidth]{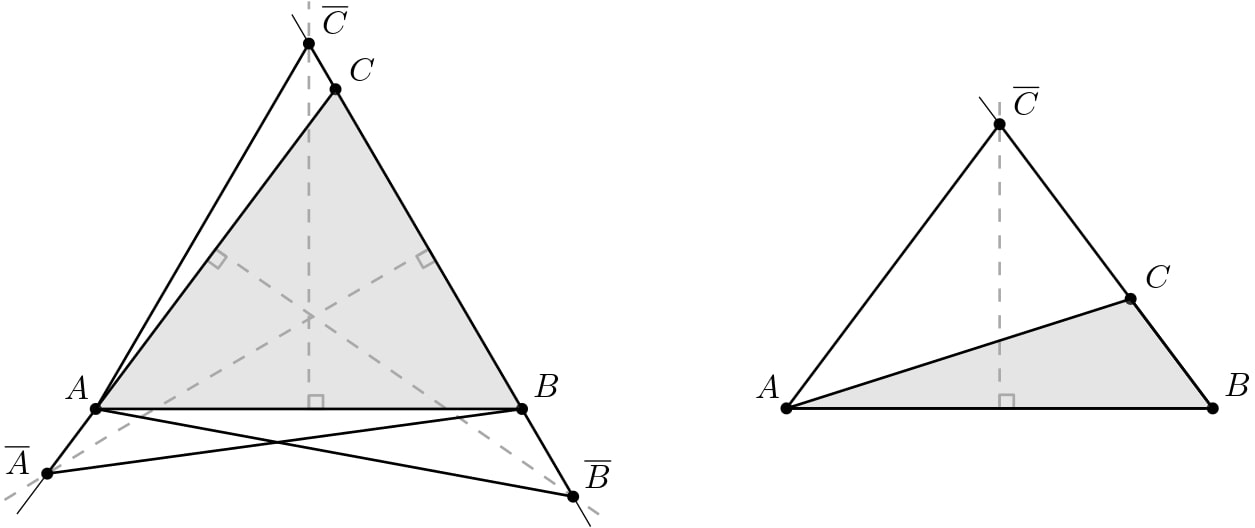}
\end{center}
  \caption{Special containers of the third kind ($\overline{A}BC, A\overline{B}C, AB\overline{C}$) in the acute and in the non-acute cases.}
  \label{fig2.2}
\end{figure}
\subsubsection{Basic inequalities for special containers}
Similarly to the case of maximum area embedded triangles, we can show that not all special containers can be of minimum perimeter.
\begin{lem}\label{lem:ie1}If $ABC$ satisfies $a<b<c$, then~\vspace{-.5em}
\begin{enumerate}[(i)]
 \setlength{\itemsep}{1pt}
 \setlength{\parskip}{1pt}
  \item $\per{ABC'}<\per{ABC''}$ and $\per{AB'C}<\per{AB_1C}$;
  \item $\per{ABC'}<\per{ABC_2} <\per{ABC_1}$; 
  \item $\per{ABC'}<\per{\overline{A}BC}<\per{A\overline{B}C}$.
\end{enumerate}
\end{lem}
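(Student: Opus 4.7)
The strategy is to compute each container's perimeter in closed form and then reduce each claimed inequality to a short trigonometric comparison in the angles $\alpha,\beta,\gamma$ of $ABC$. Using the elementary formulas for an isosceles triangle (legs of length $\ell$ and apex angle $\phi$ give perimeter $2\ell(1+\sin(\phi/2))$; base of length $b_0$ with base angles $\theta$ gives perimeter $b_0(1+\sec\theta)$), together with a quick case check identifying the apex or base in each family---in particular, when $\overline{A}BC$ and $A\overline{B}C$ exist as containers (the acute case), the points $\overline{A}$ and $\overline{B}$ lie on the rays $\vec{CA}$ and $\vec{CB}$ beyond $A$ and $B$ respectively, so that both base angles of $\overline{A}BC$ and of $A\overline{B}C$ equal $\gamma$---one obtains
\begin{align*}
\per{ABC'}&=2c(1+\sin(\alpha/2)),& \per{ABC''}&=2c(1+\sin(\beta/2)),\\
\per{AB'C}&=2b(1+\sin(\gamma/2)),& \per{AB_1C}&=2b(1+\cos\alpha),\\
\per{ABC_1}&=2c(1+\cos\alpha),& \per{ABC_2}&=2c(1+\cos\beta),\\
\per{\overline{A}BC}&=a(1+\sec\gamma),& \per{A\overline{B}C}&=b(1+\sec\gamma).
\end{align*}

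With these formulas, parts (i), (ii) and the second half of (iii) become short checks using $\alpha<\beta<\gamma$ and $\alpha+\beta+\gamma=\pi$. For (i): $\per{ABC'}<\per{ABC''}$ reduces to $\sin(\alpha/2)<\sin(\beta/2)$, immediate from $\alpha<\beta$; and $\per{AB'C}<\per{AB_1C}$ reduces to $\sin(\gamma/2)<\cos\alpha=\sin(\pi/2-\alpha)$, equivalent to $2\alpha+\gamma<\pi$, i.e., $\alpha<\beta$. For (ii): $\per{ABC_2}<\per{ABC_1}$ reduces to $\cos\beta<\cos\alpha$, which holds since $0<\alpha<\beta<\pi/2$ (with $\beta<\pi/2$ because $\gamma$ is the largest angle); and $\per{ABC'}<\per{ABC_2}$ reduces to $\sin(\alpha/2)<\cos\beta$, equivalent to $\alpha+2\beta<\pi$, i.e., $\beta<\gamma$. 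Finally, $\per{\overline{A}BC}<\per{A\overline{B}C}$ reduces to $a<b$ after dividing by $1+\sec\gamma>0$.

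The only genuine calculation is the first inequality of (iii), $2c(1+\sin(\alpha/2))<a(1+\sec\gamma)$. Using the law of sines and the half-angle identities $\sin\alpha=2\sin(\alpha/2)\cos(\alpha/2)$, $\sin\gamma=2\sin(\gamma/2)\cos(\gamma/2)$, $1+\cos\gamma=2\cos^2(\gamma/2)$, I would rewrite it equivalently as
\[
\sin(\gamma/2)\cos\gamma\bigl(1+\sin(\alpha/2)\bigr)<\sin(\alpha/2)\cos(\alpha/2)\cos(\gamma/2).
\]
This is the main obstacle: unlike the previous reductions, $\alpha$ and $\gamma$ now appear in a genuinely coupled way, and one checks that the two sides become \emph{equal} on the boundary $\beta=\gamma$ (equivalently $b=c$) of the admissible region, so no loose bound will close the gap. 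To finish, I would fix $\alpha$ and view both sides as functions of $\gamma$ on the admissible interval---with left endpoint $\gamma=(\pi-\alpha)/2$ coming from $\beta=\gamma$ and right endpoint from the acuteness condition required for $\overline{A}BC$ to be a container---showing that their difference vanishes at the left endpoint and is strictly increasing throughout the interval, which yields the strict inequality in the interior. An alternative route is to set $s=\sin(\gamma/2)$, $t=\sin(\alpha/2)$ and square: the resulting polynomial in $(s,t)$ has the linear factor $2s^2+t-1$ (which is positive precisely when $\beta<\gamma$), and a direct expansion shows the remaining cofactor is positive in the admissible region.
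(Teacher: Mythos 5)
Your perimeter formulas for all eight containers are correct, and your reductions of (i), (ii) and the second inequality of (iii) to $\alpha<\beta<\gamma$ are equivalent to what the paper does: the paper phrases these comparisons via the Hinge theorem applied to isosceles triangles with equal legs, which is exactly your comparison of the sines of the half apex angles. The genuine divergence is in the first inequality of (iii), and there your diagnosis is exactly right --- the inequality degenerates to an equality on $\beta=\gamma$, so no loose bound can work. The paper's device at this point is slicker than either of your routes: after the law of sines it rewrites the inequality as $\frac{2(1+\sin(\alpha/2))}{\sin\alpha}<\frac{2(1+\cos\gamma)}{\sin(2\gamma)}$, i.e.\ as $h(\alpha/2)<h(90^{\circ}-\gamma)$ for the single function $h(x)=\frac{1+\sin x}{\sin(2x)}$, so everything reduces to $h$ being decreasing near $0$ together with $\alpha/2>90^{\circ}-\gamma\iff\beta<\gamma$; the equality case $\beta=\gamma$ is then visible as the coincidence of the two arguments. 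Your second route does close the gap and is worth recording: with $s=\sin(\gamma/2)$, $t=\sin(\alpha/2)$ the squared inequality factors as
\[
(2s^2+t-1)\bigl(-(1-s^2)t^2+2s^2(1-s^2)t+s^2(1-2s^2)\bigr)>0,
\]
the first factor is positive iff $\beta<\gamma$, and the second factor, being concave in $t$, is positive on the admissible region $s^2\in(1/4,1/2)$, $t\in(0,1/2)$ because it is positive at $t=0$ and at $t=1/2$. That last positivity check is the one step you assert without carrying out; it does go through, but it is precisely the computation that the paper's single-function reformulation avoids (and your first route, monotonicity in $\gamma$ for fixed $\alpha$, is a less economical version of the same monotonicity idea the paper uses in one variable).
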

\noindent
\Cref{lem:ie1} immediately gives the following corollary.
\begin{cor}\label{cspec}
If $ABC$ satisfies $a<b<c$, then any minimum perimeter special container of $ABC$ is one of the following triangles: $AB'C$, $ABC'$, $AB\overline{C}$. 
\end{cor}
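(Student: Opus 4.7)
The plan is to enumerate the (at most) nine special containers of $ABC$ and to knock out six of them using Lemma \ref{lem:ie1}. The first-kind containers are $AB'C$, $ABC'$, and $ABC''$; the second-kind ones are $AB_1C$, $ABC_1$, and $ABC_2$; and the third-kind ones are $\overline{A}BC$, $A\overline{B}C$, and $AB\overline{C}$ (the latter three, when they actually enclose $ABC$). So there is nothing to do beyond bookkeeping: show that each of the six triangles outside the target list $\{AB'C,\,ABC',\,AB\overline{C}\}$ is strictly beaten in perimeter by some other special container.

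Next I would apply Lemma \ref{lem:ie1} entry by entry. Part (i) gives $\per{ABC'}<\per{ABC''}$ and $\per{AB'C}<\per{AB_1C}$, so $ABC''$ and $AB_1C$ are eliminated (each is strictly dominated by a special container, namely $ABC'$ and $AB'C$ respectively, hence cannot realize the minimum). Part (ii) gives $\per{ABC'}<\per{ABC_2}<\per{ABC_1}$, eliminating $ABC_2$ and $ABC_1$. Part (iii) gives $\per{ABC'}<\per{\overline{A}BC}<\per{A\overline{B}C}$, eliminating $\overline{A}BC$ and $A\overline{B}C$. In each comparison, the dominating triangle is itself one of $AB'C$ or $ABC'$, which will remain on the final list, so the eliminations are genuine.

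After this pass, the only surviving candidates are $AB'C$, $ABC'$, and $AB\overline{C}$, exactly the three claimed. None of them is eliminated by Lemma \ref{lem:ie1}, since that lemma places each of them on the strictly smaller side of every inequality in which it appears. Hence any minimum perimeter special container of $ABC$ must belong to $\{AB'C,\,ABC',\,AB\overline{C}\}$, as required. I do not anticipate any real obstacle: the corollary is a direct consequence of Lemma \ref{lem:ie1}, and the substantive analytic work — the triangle-by-triangle perimeter comparisons — is already concentrated in that lemma (whose proof the paper defers to the appendix).
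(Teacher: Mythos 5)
Your proposal is correct and takes essentially the same route as the paper, which derives Corollary~\ref{cspec} immediately from Lemma~\ref{lem:ie1} by the same eliminations: $ABC''$ and $AB_1C$ via part (i), $ABC_1$ and $ABC_2$ via part (ii), and $\overline{A}BC$ and $A\overline{B}C$ via part (iii). Your explicit bookkeeping --- in particular the remark that each dominating triangle is itself one of the surviving candidates $AB'C$ or $ABC'$, so the eliminations are genuine --- is exactly the content the paper leaves implicit.
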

\noindent
Again, we note that similar results hold for the area function implying that a minimum area special container of $ABC$ is 
$AB'C$,  $ABC'$, or $AB_1C$.
\section{Maximum area embedded isosceles triangles \\ -- Proof of \Cref{tfo}(ii)}\label{sec:max-area}
Let $ABC$ be a triangle and let $XYZ$ denote one of its maximum area isosceles embedded triangles. In this section, we prove that $XYZ$ has to be a special embedded triangle. 
We use the notation  $a=|BC|$, $b=|AC|$, $c=|AB|$, $x=|YZ|$, $y = |XZ|$, $z=|XY|$, and assume (with no loss of generality) that $a<b<c$.
By Lemmas \ref{lem:01} and \ref{lem:m1}, we have the following statements on maximum area embedded isosceles triangles. 
 \begin{lem}\label{lemma:Aron}
 Let $XYZ$ be any maximum area isosceles triangle embedded in $ABC$. Then~\vspace{-.5em}
\begin{enumerate}[(i)]
 \setlength{\itemsep}{1pt}
 \setlength{\parskip}{1pt}
     \item a side of $ABC$ contains a side of $XYZ$;
     \item every side of $ABC$ contains a vertex of $XYZ$;
     \item $ABC$ and $XYZ$ have a common vertex;
     \item no vertex of $XYZ$ lies in the interior of $ABC$.
 \end{enumerate}
 \end{lem}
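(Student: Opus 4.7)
The statement follows from a direct application of the two preceding preliminary lemmas, so I would present the proof as an essentially bookkeeping step. First I would observe that any maximum area isosceles triangle $XYZ$ embedded in $ABC$ is, in particular, a maximum area similar copy of itself embedded in $ABC$: since similarity preserves the isosceles property, any strictly larger similar copy of $XYZ$ lying inside $ABC$ would itself be a larger isosceles embedded triangle, contradicting the maximality of $XYZ$ among isosceles embeddings. This reduction allows \Cref{lem:01}(i) to be applied with $\Delta_1 = XYZ$ and $\Delta_2 = ABC$.

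Parts (a), (b), and (c) of \Cref{lem:01}(i) then immediately yield properties (i), (ii), and (iii) of the present lemma, respectively: a side of $ABC$ contains a side of $XYZ$, every side of $ABC$ contains a vertex of $XYZ$, and $ABC$ and $XYZ$ share a vertex.

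For (iv), I would invoke \Cref{lem:m1}(ii), which asserts that every vertex of a maximum area isosceles triangle embedded in $ABC$ lies on a side of $ABC$; this is equivalent to saying that no vertex of $XYZ$ lies in the open interior of $ABC$. Alternatively, (iv) can be derived combinatorially from (ii): three vertices of $XYZ$ distributed so that each of the three sides of $ABC$ contains at least one of them forces each vertex to lie on some side.

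Since both auxiliary lemmas are elementary and their proofs are deferred to the Appendix, no genuine obstacle is expected at this point. The only step requiring a moment's care is the reduction from ``maximum area isosceles embedded'' to ``maximum area similar copy embedded,'' which is immediate from the observation that similarity preserves the isosceles property. The lemma is stated separately only to collect, in one convenient place, the geometric constraints that will be exploited in the main case analysis of \Cref{sec:max-area}.
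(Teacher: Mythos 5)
Your proof is correct and takes essentially the same route as the paper, which obtains this lemma by directly citing \Cref{lem:01} and \Cref{lem:m1}; the reduction from ``maximum area isosceles embedded triangle'' to ``maximum area similar copy of itself,'' which you justify via the fact that similarity preserves the isosceles property, is exactly the point that makes the citation legitimate. One caveat: your proposed \emph{alternative} combinatorial derivation of (iv) from (ii) is not valid --- a vertex of $XYZ$ sitting at a corner of $ABC$ covers two sides at once, so (ii) could be satisfied while a third vertex lies in the interior --- so rely on \Cref{lem:m1}(ii) as in your primary argument.
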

If $XYZ$ has at least two common vertices with $ABC$, then by \Cref{lemma:Aron}(iv), $XYZ$ and $ABC$ have a common side and a common angle. Therefore, we can assume that $ABC$ and $XYZ$ have exactly one common vertex.

Denote the midpoints of the sides $BC$, $AC$, and $AB$ by $m_{A}$, $m_{B}$, and $m_{C}$, respectively. We divide the boundary of $ABC$ into $3$ polylines defined as $$ \widehat{m_Am_B}= m_AC \cup Cm_B, ~\ \widehat{m_Bm_C}=m_BA \cup A m_C, ~\ \widehat{m_Cm_A}= m_CB \cup Bm_A.$$ 
We get the following constraint on the position of  $X$, $Y$, and $Z$:
\begin{lem}\label{clm2}
Let $XYZ$ be a maximum area embedded isosceles triangle of the triangle $ABC$. Then each of $\widehat{m_Am_B}, \widehat{m_Bm_C}$, and $\widehat{m_Cm_A}$ contains exactly one vertex of $XYZ$.
\end{lem}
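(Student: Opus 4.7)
My plan is to argue by contradiction: if some corner arc contains two vertices of $XYZ$ (and hence, by pigeonhole, another corner arc is empty), I will exhibit a strictly larger isosceles triangle embedded in $ABC$, contradicting the maximality of $\area(XYZ)$.

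First, by Lemma~\ref{lemma:Aron}(i), after relabeling I may assume that $X,Y\in BC$. Lemma~\ref{lemma:Aron}(ii) then forces a vertex of $XYZ$ on each of $AB$ and $AC$; together with Lemma~\ref{lemma:Aron}(iii) and our running hypothesis that $XYZ$ and $ABC$ share exactly one vertex, this leaves only a short list of configurations. Either $Z=A$ with $X,Y$ interior to $BC$, or one of $\{X,Y\}$ coincides with $B$ or $C$ and $Z$ lies in the interior of the opposite side. In each configuration the arc condition translates into an explicit position inequality: e.g., when $Z=A$ it amounts to $m_A\in XY$, while when $Y=C$ and $Z\in AB$ it reads $X\in Bm_A$ and $Z\in Am_C$.

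Next, I split further on the three possible isosceles forms ($|XY|=|XZ|$, $|XY|=|YZ|$, $|XZ|=|YZ|$) and on which corner arc is crowded. This produces only a handful of bad sub-cases. In each, the empty arc supplies room inside $ABC$ in the direction away from the crowded corner, while the chosen isosceles equality couples the two clustered vertices into a one-parameter family of embedded isosceles triangles; sliding along this family, I compute the first-order variation of the area and verify it is strictly positive in the admissible direction. When $Z=A$ the area simplifies to $\tfrac{1}{2}|XY|\cdot h_A$ with the height $h_A$ fixed, so the task reduces to showing that the base $|XY|$ can be strictly enlarged.

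The main obstacle is the bookkeeping in this last step: in every bad sub-case one must verify that the enlarging deformation keeps all three vertices inside $ABC$ (so that none of the constraints coming from the three sides of $ABC$ binds prematurely) and simultaneously respects the specified isosceles equality. The sub-cases whose apex is interior to a side of $ABC$ (rather than at the common vertex) require choosing the sliding direction with some care, depending on which pair of sides of $XYZ$ is equal.
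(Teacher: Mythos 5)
There is a genuine gap here: your proposal is a plan whose decisive steps are asserted rather than carried out. The entire content of the lemma is hidden in the sentence ``I compute the first-order variation of the area and verify it is strictly positive in the admissible direction,'' and this is never done for any of the sub-cases. Worse, the strategy itself is not guaranteed to succeed: a maximum-area triangle in a bad configuration need not admit an area-increasing deformation \emph{within the one-parameter isosceles family you attach to it}. For instance, with $Z=A$, $X,Y\in Bm_A$ and $|XY|=|XZ|$, enlarging the base while preserving the isosceles equality forces $X$ and $Y$ to move in coupled directions that can immediately collide with the vertex $B$ or with the constraint $Y\in BC$; one then has to jump to a \emph{different} embedded isosceles triangle (of a different type, or anchored at a different vertex) to beat $\ar{XYZ}$, and nothing in your outline identifies which one or proves the comparison. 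You acknowledge this bookkeeping as ``the main obstacle'' but do not resolve it, so the argument as written does not establish the lemma. A smaller issue: you invoke the running hypothesis that $XYZ$ and $ABC$ share exactly one vertex, which the lemma as stated does not assume (the configurations with two shared vertices and a third vertex crowding one corner arc still need to be excluded).

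For contrast, the paper avoids all case analysis with a single global estimate. If two vertices of $XYZ$ lie on the same corner arc, say $\widehat{m_Cm_A}$, then cutting $XYZ$ along the midline $m_Am_C$ shows $\ar{XYZ}\le \ar{Bm_Am_C}+\ar{m_Am_Bm_C}=\tfrac12\ar{ABC}$, because the piece beyond the midline has base $|T_1T_2|\le|m_Am_C|$ and height at most that of the medial triangle. On the other hand the special embedded triangle $AB'C$ satisfies $\ar{AB'C}=\tfrac12 b^2\sin\alpha>\tfrac14 bc\sin\alpha=\tfrac12\ar{ABC}$, since $c<a+b<2b$. These two inequalities contradict maximality at once, with no enumeration of isosceles types or vertex positions. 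If you want to salvage your approach, the cleanest fix is to replace the local variation step by exactly this kind of uniform upper bound on the area of any triangle confined near a corner, followed by one fixed witness triangle exceeding that bound.
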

\begin{proof}
By \Cref{lemma:Aron}, $X,Y,Z$ lies on the boundary of $ABC$. 
%we show that if the vertices of $XYZ$ do not satisfy the statement, then $\ar{XYZ}\le \frac{\ar{ABC}}{2}$. 
Assume, without loss of generality, that $\widehat{m_Am_C}$ contains $X$ and $Z$ (see \Cref{f4}).
\begin{figure}[ht!]
\begin{center}
\begin{tikzpicture}[scale=0.8]
\clip(-1,-0.6) rectangle (8.54,5);
\fill[line width=2pt,fill=black,fill opacity=0.1] (6,0) -- (7.25,1) -- (4.28,3.43) -- cycle;
\draw [line width=1pt] (0,0)-- (8,0);
\draw [line width=1pt] (5,4)-- (0,0);
\draw [line width=1pt] (5,4)-- (8,0);
\draw [line width=1pt] (6,0)-- (7.25,1);
\draw [line width=1pt] (7.25,1)-- (4.28,3.43);
\draw [line width=1pt] (4.28,3.43)-- (6,0);

\draw[dashed,line width=1pt, color=gray, opacity=0.5] (2.5, 2) -- (4,0) --(6.5,2)--cycle;

\begin{scriptsize}
\draw [fill=black] (0,0) circle (1.25pt);
\draw[color=black] (0,0.3) node {$A$};
\draw [fill=black] (8,0) circle (1.25pt);
\draw[color=black] (8,0.4) node {$B$};
\draw [fill=black] (5,4) circle (1.25pt);
\draw[color=black] (5,4.3) node {$C$};
\draw[color=black] (4,-0.2) node {$c$};
\draw[color=black] (2.9,2.7) node {$b$};
\draw[color=black] (6.3,2.7) node {$a$};
\draw [fill=black] (4,0) circle (1.25pt);
\draw[color=black] (4.05,0.35) node {$m_{C}$};
\draw [fill=black] (6.5,2) circle (1.25pt);
\draw[color=black] (6.7,2.3) node {$m_{A}$};
\draw [fill=black] (2.5,2) circle (1.25pt);
\draw[color=black] (2.45,2.3) node {$m_{B}$};
\draw [fill=black] (6,0) circle (1.25pt);
\draw[color=black] (6.05,0.3) node {$Z$};
\draw [fill=black] (7.25,1) circle (1.25pt);
\draw[color=black] (7.35,1.25) node {$X$};
\draw[color=black] (6.3,1.5) node {$T_1$};
\draw [fill=black] (5.427,1.14) circle (1.5pt);
\draw[color=black] (5.8,1) node {$T_2$};
\draw [fill=black] (6.26,1.81) circle (1.5pt);
\draw [fill=black] (4.28,3.43) circle (1.25pt);
\draw[color=black] (4.23,3.772) node {$Y$};
\end{scriptsize}
\end{tikzpicture}
 \caption{Proof of \Cref{clm2}.}
 \label{f4}
\end{center}
\end{figure}

\noindent 
Let $T_1 = m_Am_C \cap XY$ and $T_2 = m_Am_C \cap YZ$. 
Then $\ar{XT_1T_2Z}\le \ar{Bm_Am_C}$ and by $|T_1T_2|\le|m_Am_C|$ we obtain that
$\ar{T_2 T_1Y}\le \ar{m_Am_Bm_C}$. Thus we have 
$$
    \ar{XYZ}  \leq \ar{Bm_Am_C} + \ar{m_A m_B m_B} = \frac{\ar{ABC}}{2}.
$$
On the other hand, since $c\leq a+b \leq 2b$, the special embedded triangle $AB'C$ satisfies
$$
\ar{AB'C}=\frac{b^2\sin(\sphericalangle CAB)}{2}> \frac{bc\sin(\sphericalangle CAB)}{4} = \frac{\ar{ABC}}{2}.
$$ 
Hence,  $\ar{XYZ}< \ar{AB'C}$, which contradicts the maximality of the area of $XYZ$.
\end{proof}
Lemmas \ref{lemma:Aron} and \ref{clm2} imply that a maximum area embedded isosceles triangle of $ABC$ is either special or its vertex arrangement corresponds to one of the $9$ cases illustrated in \Cref{f5}. 
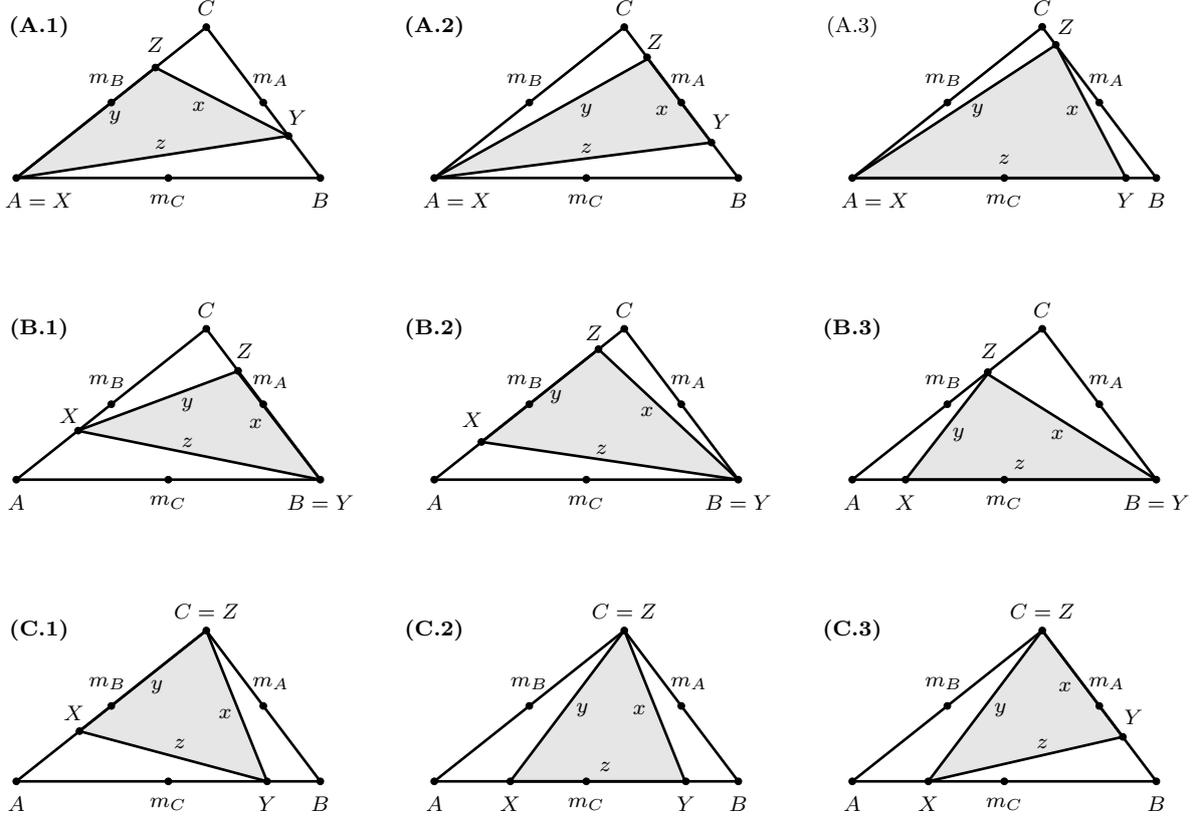
\begin{figure}[H]
\begin{center}
\begin{tikzpicture}
\clip(-0.2,-0.4) rectangle (15.5,10.5);
\fill[line width=1pt,fill=black,fill opacity=0.1] (0,8) -- (3.58,8.56) -- (1.83,9.46) -- cycle;
\fill[line width=1pt,fill=black,fill opacity=0.1] (5.5,8) -- (8.31,9.58) -- (9.15,8.47) -- cycle;
\fill[line width=1pt,fill=black,fill opacity=0.1] (11,8) -- (13.68,9.765) -- (14.6,8) -- cycle;
\fill[line width=1pt,fill=black,fill opacity=0.1] (4,4) -- (0.814,4.65) -- (2.92,5.44) -- cycle;
\fill[line width=1pt,fill=black,fill opacity=0.1] (9.5,4) -- (6.12,4.5) -- (7.66,5.73) -- cycle;
\fill[line width=1pt,fill=black,fill opacity=0.1] (15,4) -- (12.78,5.4256) -- (11.7,4) -- cycle;
\fill[line width=1pt,fill=black,fill opacity=0.1] (2.5,2) -- (3.3,0) -- (0.83,0.666) -- cycle;
\fill[line width=1pt,fill=black,fill opacity=0.1] (8,2) -- (6.5,0) -- (8.8,0) -- cycle;
\fill[line width=1pt,fill=black,fill opacity=0.1] (13.5,2) -- (12,0) -- (14.559,0.59) -- cycle;
\draw [line width=1pt] (0,0)-- (4,0);
\draw [line width=1pt] (2.5,2)-- (0,0);
\draw [line width=1pt] (2.5,2)-- (4,0);
\draw [line width=1pt] (5.5,0)-- (9.5,0);
\draw [line width=1pt] (8,2)-- (5.5,0);
\draw [line width=1pt] (8,2)-- (9.5,0);
\draw [line width=1pt] (11,0)-- (15,0);
\draw [line width=1pt] (13.5,2)-- (11,0);
\draw [line width=1pt] (0,4)-- (4,4);
\draw [line width=1pt] (2.5,6)-- (4,4);
\draw [line width=1pt] (0,4)-- (2.5,6);
\draw [line width=1pt] (5.5,4)-- (9.5,4);
\draw [line width=1pt] (8,6)-- (5.5,4);
\draw [line width=1pt] (9.5,4)-- (8,6);
\draw [line width=1pt] (11,4)-- (15,4);
\draw [line width=1pt] (11,4)-- (13.5,6);
\draw [line width=1pt] (15,4)-- (13.5,6);
\draw [line width=1pt] (0,8)-- (4,8);
\draw [line width=1pt] (2.5,10)-- (4,8);
\draw [line width=1pt] (0,8)-- (2.5,10);
\draw [line width=1pt] (5.5,8)-- (9.5,8);
\draw [line width=1pt] (8,10)-- (9.5,8);
\draw [line width=1pt] (5.5,8)-- (8,10);
\draw [line width=1pt] (11,8)-- (15,8);
\draw [line width=1pt] (15,8)-- (13.5,10);
\draw [line width=1pt] (11,8)-- (13.5,10);
\draw [line width=1pt] (13.5,2)-- (15,0);
\draw [line width=1pt] (0,8)-- (3.58,8.557);
\draw [line width=1pt] (3.58,8.557)-- (1.83,9.464);
\draw [line width=1pt] (1.83,9.464)-- (0,8);
\draw [line width=1pt] (5.5,8)-- (8.31,9.58);
\draw [line width=1pt] (8.31,9.58)-- (9.146,8.47);
\draw [line width=1pt] (9.146,8.47)-- (5.5,8);
\draw [line width=1pt] (11,8)-- (13.68,9.765);
\draw [line width=1pt] (13.678,9.765)-- (14.6,8);
\draw [line width=1pt] (14.6,8)-- (11,8);
\draw [line width=1pt] (4,4)-- (0.8,4.65);
\draw [line width=1pt] (0.81,4.65)-- (2.918,5.44);
\draw [line width=1pt] (2.9,5.44)-- (4,4);
\draw [line width=1pt] (9.5,4)-- (6.12,4.5);
\draw [line width=1pt] (6.12,4.5)-- (7.66,5.73);
\draw [line width=1pt] (7.66,5.73)-- (9.5,4);
\draw [line width=1pt] (15,4)-- (12.78,5.4);
\draw [line width=1pt] (12.78,5.4)-- (11.7,4);
\draw [line width=1pt] (11.7,4)-- (15,4);
\draw [line width=1pt] (2.5,2)-- (3.3,0);
\draw [line width=1pt] (3.3,0)-- (0.83,0.666);
\draw [line width=1pt] (0.83,0.666)-- (2.5,2);
\draw [line width=1pt] (8,2)-- (6.5,0);
\draw [line width=1pt] (6.5,0)-- (8.8,0);
\draw [line width=1pt] (8.8,0)-- (8,2);
\draw [line width=1pt] (13.5,2)-- (12,0);
\draw [line width=1pt] (12,0)-- (14.559,0.59);
\draw [line width=1pt] (14.559,0.59)-- (13.5,2);
\begin{scriptsize}
\draw [fill=black] (0,0) circle (1.25pt);
\draw[color=black] (0.3,2) node {\textbf{(C.1)}};
\draw[color=black] (0,-0.3) node {$A$};
\draw [fill=black] (4,0) circle (1.25pt);
\draw[color=black] (4,-0.3) node {$B$};
\draw [fill=black] (2.5,2) circle (1.25pt);
\draw[color=black] (2.5,2.25) node {$C=Z$};
\draw [fill=black] (5.5,0) circle (1.25pt);
\draw[color=black] (5.5,2) node {\textbf{(C.2)}};
\draw[color=black] (5.5,-0.3) node {$A$};
\draw [fill=black] (9.5,0) circle (1.25pt);
\draw[color=black] (9.5,-0.3) node {$B$};
\draw [fill=black] (8,2) circle (1.25pt);
\draw[color=black] (8,2.25) node {$C=Z$};
\draw [fill=black] (11,0) circle (1.25pt);
\draw[color=black] (11,2) node {\textbf{(C.3)}};
\draw[color=black] (11,-0.3) node {$A$};
\draw [fill=black] (15,0) circle (1.25pt);
\draw[color=black] (15,-0.3) node {$B$};
\draw [fill=black] (13.5,2) circle (1.25pt);
\draw[color=black] (13.5,2.25) node {$C=Z$};
\draw [fill=black] (0,4) circle (1.25pt);
\draw[color=black] (0.3,6) node {\textbf{(B.1)}};
\draw[color=black] (0,3.7) node {$A$};
\draw [fill=black] (4,4) circle (1.25pt);
\draw[color=black] (4,3.7) node {$B=Y$};
\draw [fill=black] (2.5,6) circle (1.25pt);
\draw[color=black] (2.5,6.25) node {$C$};
\draw [fill=black] (5.5,4) circle (1.25pt);
\draw[color=black] (5.5,6) node {\textbf{(B.2)}};
\draw[color=black] (5.5,3.7) node {$A$};
\draw [fill=black] (9.5,4) circle (1.25pt);
\draw[color=black] (9.5,3.7) node {$B=Y$};
\draw [fill=black] (8,6) circle (1.25pt);
\draw[color=black] (8,6.25) node {$C$};;
\draw [fill=black] (11,4) circle (1.25pt);
\draw[color=black] (11,6) node {\textbf{(B.3)}};
\draw[color=black] (11,3.7) node {$A$};
\draw [fill=black] (15,4) circle (1.25pt);
\draw[color=black] (15,3.7) node {$B=Y$};
\draw [fill=black] (13.5,6) circle (1.25pt);
\draw[color=black] (13.5,6.25) node {$C$};
\draw [fill=black] (0,8) circle (1.25pt);
\draw[color=black] (0.3,10) node {\textbf{(A.1)}};
\draw[color=black] (0.3,7.7) node {$A=X$};
\draw [fill=black] (4,8) circle (1.25pt);
\draw[color=black] (4,7.7) node {$B$};
\draw [fill=black] (2.5,10) circle (1.25pt);
\draw[color=black] (2.5,10.25) node {$C$};
\draw [fill=black] (5.5,8) circle (1.25pt);
\draw[color=black] (5.5,10) node {\textbf{(A.2)}};
\draw[color=black] (5.8,7.7) node {$A=X$};
\draw [fill=black] (9.5,8) circle (1.25pt);
\draw[color=black] (9.5,7.7) node {$B$};
\draw [fill=black] (8,10) circle (1.25pt);
\draw[color=black] (8,10.25) node {$C$};
\draw [fill=black] (11,8) circle (1.25pt);
\draw[color=black] (11,10) node {(A.3)};
\draw[color=black] (11.3,7.7) node {$A=X$};
\draw [fill=black] (15,8) circle (1.25pt);
\draw[color=black] (15,7.7) node {$B$};
\draw [fill=black] (13.49899,10.00426) circle (1.25pt);
\draw[color=black] (13.5,10.25) node {$C$};
\draw [fill=black] (2,0) circle (1.25pt);
\draw[color=black] (2,-0.3) node {$m_C$};
\draw [fill=black] (7.5,0) circle (1.25pt);
\draw[color=black] (7.5,-0.3) node {$m_C$};
\draw [fill=black] (13,0) circle (1.25pt);
\draw[color=black] (13,-0.3) node {$m_C$};
\draw [fill=black] (2,4) circle (1.25pt);
\draw[color=black] (2,3.7) node {$m_C$};
\draw [fill=black] (2,8) circle (1.25pt);
\draw[color=black] (2,7.7) node {$m_C$};
\draw [fill=black] (7.5,8) circle (1.25pt);
\draw[color=black] (7.5,7.7) node {$m_C$};
\draw [fill=black] (7.5,4) circle (1.25pt);
\draw[color=black] (7.5,3.7) node {$m_C$};
\draw [fill=black] (13,4) circle (1.25pt);
\draw[color=black] (13,3.7) node {$m_C$};
\draw [fill=black] (13,8) circle (1.25pt);
\draw[color=black] (13,7.7) node {$m_C$};
\draw [fill=black] (3.25,1) circle (1.25pt);
\draw[color=black] (3.35,1.3) node {$m_A$};
\draw [fill=black] (1.25,1) circle (1.25pt);
\draw[color=black] (1.2,1.3) node {$m_B$};
\draw [fill=black] (3.25,5) circle (1.25pt);
\draw[color=black] (3.35,5.3) node {$m_A$};
\draw [fill=black] (1.25,5) circle (1.25pt);
\draw[color=black] (1.2,5.3) node {$m_B$};
\draw [fill=black] (6.75,1) circle (1.25pt);
\draw[color=black] (6.75,1.3) node {$m_B$};
\draw [fill=black] (8.75,1) circle (1.25pt);
\draw[color=black] (8.85,1.3) node {$m_A$};
\draw [fill=black] (8.75,5) circle (1.25pt);
\draw[color=black] (8.85,5.3) node {$m_A$};
\draw [fill=black] (6.75,5) circle (1.25pt);
\draw[color=black] (6.7,5.3) node {$m_B$};
\draw [fill=black] (14.25,1) circle (1.25pt);
\draw[color=black] (14.35,1.3) node {$m_A$};
\draw [fill=black] (12.25,1) circle (1.25pt);
\draw[color=black] (12.2,1.3) node {$m_B$};
\draw [fill=black] (14.25,5) circle (1.25pt);
\draw[color=black] (14.35,5.3) node {$m_A$};
\draw [fill=black] (12.25,5) circle (1.25pt);
\draw[color=black] (12.2,5.3) node {$m_B$};
\draw [fill=black] (14.25,9) circle (1.25pt);
\draw[color=black] (14.35,9.3) node {$m_A$};
\draw [fill=black] (12.25,9) circle (1.25pt);
\draw[color=black] (12.2,9.3) node {$m_B$};
\draw [fill=black] (8.75,9) circle (1.25pt);
\draw[color=black] (8.85,9.3) node {$m_A$};
\draw [fill=black] (6.75,9) circle (1.25pt);
\draw[color=black] (6.7,9.3) node {$m_B$};
\draw [fill=black] (3.25,9) circle (1.25pt);
\draw[color=black] (3.35,9.3) node {$m_A$};
\draw [fill=black] (1.25,9) circle (1.25pt);
\draw[color=black] (1.2,9.3) node {$m_B$};
\draw [fill=black] (3.582,8.557) circle (1.25pt);
\draw[color=black] (3.7,8.8) node {$Y$};
\draw [fill=black] (1.83,9.464) circle (1.25pt);
\draw[color=black] (1.83,9.76) node {$Z$};
\draw[color=black] (1.9,8.45) node {$z$};
\draw[color=black] (2.4,8.95) node {$x$};
\draw[color=black] (1.3,8.8) node {$y$};
\draw [fill=black] (8.3,9.6) circle (1.25pt);
\draw[color=black] (8.4,9.8) node {$Z$};
\draw [fill=black] (9.146,8.47) circle (1.25pt);
\draw[color=black] (9.28,8.75) node {$Y$};
\draw[color=black] (7.5,8.9) node {$y$};
\draw[color=black] (8.5,8.9) node {$x$};
\draw[color=black] (7.5,8.4) node {$z$};
\draw [fill=black] (13.678,9.765) circle (1.25pt);
\draw[color=black] (13.81,10) node {$Z$};
\draw [fill=black] (14.6,8) circle (1.25pt);
\draw[color=black] (14.6,7.7) node {$Y$};
\draw[color=black] (12.65,8.9) node {$y$};
\draw[color=black] (13.9,8.9) node {$x$};
\draw[color=black] (13,8.25) node {$z$};
\draw [fill=black] (0.814,4.65) circle (1.25pt);
\draw[color=black] (0.7,4.85) node {$X$};
\draw [fill=black] (2.918,5.443) circle (1.25pt);
\draw[color=black] (3,5.7) node {$Z$};
\draw[color=black] (2.25,4.5) node {$z$};
\draw[color=black] (2.25,5) node {$y$};
\draw[color=black] (3.15,4.75) node {$x$};
\draw [fill=black] (6.12,4.5) circle (1.25pt);
\draw[color=black] (6,4.8) node {$X$};
\draw [fill=black] (7.66,5.73) circle (1.25pt);
\draw[color=black] (7.6,5.95) node {$Z$};
\draw[color=black] (7.7,4.4) node {$z$};
\draw[color=black] (7.1,5.1) node {$y$};
\draw[color=black] (8.3,4.9) node {$x$};
\draw [fill=black] (12.782,5.4256) circle (1.25pt);
\draw[color=black] (12.8,5.7) node {$Z$};
\draw [fill=black] (11.7,4) circle (1.25pt);
\draw[color=black] (11.7,3.7) node {$X$};
\draw[color=black] (13.7,4.6) node {$x$};
\draw[color=black] (12.4,4.6) node {$y$};
\draw[color=black] (13.2,4.2) node {$z$};
\draw [fill=black] (0.832,0.666) circle (1.25pt);
\draw[color=black] (0.76,0.9) node {$X$};
\draw [fill=black] (3.3,0) circle (1.25pt);
\draw[color=black] (3.3,-0.3) node {$Y$};
\draw[color=black] (2.75,0.9) node {$x$};
\draw[color=black] (2.15,0.5) node {$z$};
\draw[color=black] (1.85,1.25) node {$y$};
\draw [fill=black] (6.5,0) circle (1.25pt);
\draw[color=black] (6.5,-0.3) node {$X$};
\draw [fill=black] (8.81,0) circle (1.25pt);
\draw[color=black] (8.81,-0.3) node {$Y$};
\draw[color=black] (7.45,0.95) node {$y$};
\draw[color=black] (7.75,0.2) node {$z$};
\draw[color=black] (8.2,0.95) node {$x$};
\draw [fill=black] (12,0) circle (1.25pt);
\draw[color=black] (12,-0.3) node {$X$};
\draw [fill=black] (14.559,0.59) circle (1.25pt);
\draw[color=black] (14.7,0.85) node {$Y$};
\draw[color=black] (12.95,0.95) node {$y$};
\draw[color=black] (13.5,0.5) node {$z$};
\draw[color=black] (13.8,1.25) node {$x$};
\end{scriptsize}
\end{tikzpicture}
\end{center}
 \caption{The $9$ possible arrangements of the points $X, Y, Z$ in a given triangle $ABC$.}
 \label{f5}
\end{figure}
\noindent
To complete the proof of \Cref{tfo}(ii), it remains to prove that none of the arrangements depicted on  \Cref{f5} can be optimal. 
We prove this for each of the $9$ cases, separately. Note that in some instances, we will refer to special embedded triangles using their specific labeling introduced in \Cref{sec:special-embedded}.

\noindent
\textbf{Case A:} \textit{The common vertex of $ABC$ and $XYZ$ is $A=X$. }

\medskip
\noindent
{\textbf{Subcase A.1:}}
\textit{ $Y\in BC$ and $Z\in AC$.}\\
 Observe that since $b<c$, the orthogonal projection of $A$ onto $CB$ is contained in $Cm_A$, which implies that $\sphericalangle AYB$ is obtuse. Thus, we can rotate  $XYZ$ about $X$ such that two of its vertices get to the interior of $ABC$ and so, by \Cref{lemma:Aron}, $XYZ$ cannot be of maximum area.

\medskip
\noindent
\textbf{\textbf{Subcase A.2:}} \textit{Both $Y$ and $Z$ are in $BC$.}\\
If $y = z$, then we can increase $\ar{XYZ}$ by moving $Z$ towards $C$ and $Y$ towards $B$ while maintaining $|XZ| = |XY|$, since $\alpha=\sphericalangle CAB < 90^{\circ}$.  If $ABC$ is acute, then we can do this until the vertices $Z$ and $C$ will coincide, and triangle $XYZ$ will be the same as the special embedded triangle $A\overline{B}C$. If $ABC$ is non-acute, then $y \neq z$.
Clearly, $|AZ|=y>|ZB|>|YZ|=x.$
Hence, $x \neq y$. A similar argument shows that $x \neq z$.

\medskip
\noindent
\textbf{\textbf{Subcase A.3:}}\textit{ $Y\in AB$ and $Z\in BC$.}\\
Since $a<b$, the orthogonal projection $\widehat{Z}$ of $Z$ to the line segment $AB$ lies in $m_CB$.

If $x=y$, then  $|AY| = 2|A\hat Z| > 2 |Am_C| = |AB|$, a contradiction to $Y \in AB$.

If $x=z$, then the altitude with base $z$ in $XYZ$ is smaller than the altitude with base $c$ in $ABC$. On the other hand, $x=z<a$,  if $\sphericalangle ZYB\ge 90^{\circ}$. In this case, the special embedded triangle $A''BC$ satisfies $\ar{A''BC}> \ar{XYZ}$. Otherwise, $x=z<y$ (as $\sphericalangle AYZ>90^{\circ}$) and $y<c$. Let $Y'$ be the point in $AB$ that is defined by the equality $|AY'|=|AZ|$. (The existence of $Y'\in AB$ is a consequence of $y<c$.) Then, $\ar{XY'Z}> \ar{XYZ}$. In both cases it follows that the area of $XYZ$ cannot be optimal.

If $y=z$, consider the special embedded triangle $AB'C$,  define $l$ to be the line parallel to $B'C$ going through $Y$ and let $Z'=l\cap BC$, see \Cref{f6}. %the following triangles $XYZ'$ defined by move $B'C$ edge of triangle $AB'C$ to $\Vec{AB}$ direction, with $|B'Y|$. Then for all $XYZ$ triangles in Case (3), where $y=z$, there is a triangle $XYZ'$. From $XYZ'$ we could easily get $XYZ$, if we increase the length of $y'$ to the length of $z$.
Since $Z'\in CZ$, we have
$$
\ar{XYZ}<\ar{XYZ'}=\ar{AB'C} \cdot \frac{b+|B'Y|}{b} \cdot \frac{c-b-|B'Y|}{c-b}.
$$
The inequality follows from the fact that $Z'\in CZ$. %by the Intercept Theorem. 
Therefore, the altitude of $XYZ$ with base $z$ is greater than the altitude of $XYZ'$ with base $z$.
Thus, it is enough to show that 
$$\frac{b+|B'Y|}{b}\cdot \frac{c-b-|B'Y|}{c-b}<1.$$ As $b>0$ and $c-b>0$, this is equivalent to $|B'Y|(2b-c+|B'Y|)>0,$
which follows from the triangle inequality $c<a+b<2b$.
\begin{figure}[H]
\begin{center}
 \includegraphics[width=0.4\textwidth]{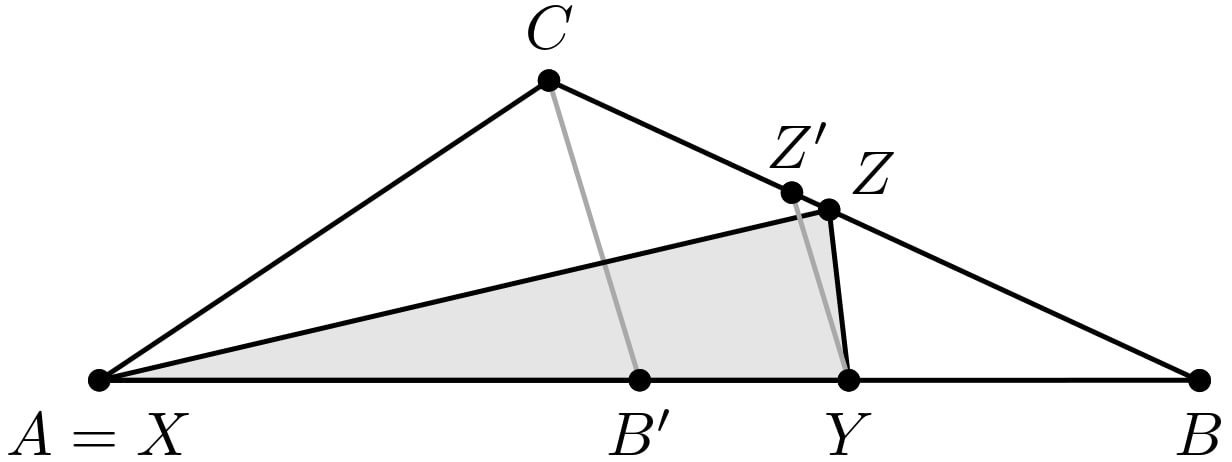}
\end{center}
 \caption{Illustration for Subcase A.3.}
 \label{f6}
\end{figure}
% \end{description}

\medskip
\noindent
\textbf{Case B:} \textit{The common vertex of $ABC$ and $XYZ$ is $B=Y$.}

% \begin{description}
\medskip
\noindent
\textbf{Subcase B.1:} \textit{$X\in AC$ and $Z\in BC$.}\\
Since $a<c$, we have that $\sphericalangle AXY>90^{\circ}$, and hence, we can rotate the triangle $XYZ$ around $Y$ so that the image of the vertices $X,Z$ will be inside of $ABC$. As in Subcase A.1, this implies that the area of $XYZ$ is not optimal.

\medskip
\noindent
\textbf{Subcase B.2:} \textit{Both $X$ and $Z$ are in $AC$.}\\
Observe that  $b<c$ implies that $A$ and $C$ are on the same side of the perpendicular bisector of $BC$. This implies that $|XY|=z>|XC|>|XZ|=y$. 
If $x=z$, we can `open' $\sphericalangle XYZ$ as in  Subcase A.2 and get that %the area of the special embedded triangle
$\ar{XYZ}<\ar{\overline{\overline{A}}BC}$. %is strictly larger than $\ar{XYZ}$.
Hence, we can assume that $x=y$.

If the triangle $ABC$ is non-acute, then  consider the special embedded triangle $ABC_1$. Since the altitudes of $ABC_1$ and $XYZ$ from vertex $B=Y$ are equal, and $x=y<|BC_1|=|AC_1|$ (as
$\sphericalangle BCA\ge 90^\circ$), %< \sphericalangle YZX$, 
 we have that $\ar{XYZ}<\ar{ABC_1}$. 

If $ABC$ is acute, let $\hat{B}$ denote the orthogonal projection of $B$ onto $AC$. If $Z\in A\hat{B}$, then we can slightly rotate $XYZ$  around $Y$ (as $\sphericalangle YXA> \sphericalangle YZA>90^{\circ}$). Thus, by \Cref{lemma:Aron}(iv), the area of $XYZ$ is not maximal. 
Thus, we can assume that $Z\in C\hat{B}$, that is, $\sphericalangle YZA\le 90^\circ$. Similarly as above, this implies that $x=|YZ|<a=|BC|$ and thus the special embedded triangle $A'BC$ satisfies $\ar{XYZ}<\ar{A'BC}$. 

\medskip
\noindent\textbf{Subcase B.3:} \textit{$X\in AB$ and $Z\in AC$.}\\
If $y=z$, then, since $\sphericalangle CAB<\min(\sphericalangle AXZ, \sphericalangle ZXY)$, we get that $y=|XZ| < |AZ| < b=|AC|$, which immediately implies that the special embedded triangle $AB'C$ satisfies $\ar{XYZ}< \ar{AB'C}$.

Now we assume that $x=z$. If $A$ and $Z$ lie on the same side of the perpendicular bisector of $AB$, then we can reflect $XYZ$ to this perpendicular bisector. We denote this reflection by $X'Y'Z'$. Clearly, $X',Y' \in AB$, and $Z'$ is inside of $ABC$, which implies that $\ar{XYZ}$ is not maximal. %There are two remaining cases: $Z$ lies on the perpendicular bisector, or it lies on the same side of it as $C$. 
If $Z$ is on the perpendicular bisector of $AB$, then $XYZ$ is strictly contained in the special embedded triangle $ABC_1$, so $\ar{XYZ} < \ar{ABC_1}$.
If $Z$ and $C$ are on the same side of the perpendicular bisector of $AB$, then $x=z<|AZ|<|AC|=b$, and hence %the area of the special embedded triangle is strictly larger than 
$\ar{XYZ}<\ar{AB'C}$.

It remains to handle the case $x=y$. We show that $\ar{XYZ}<\ar{ABC_1}$. The condition $x=y$ implies that $Z\in C_1C$. 
%Let $|AX|=|AX|>0$, hence 
Plainly, $z=c-|AX|$. 
Denote the lengths of the altitudes from $C_1$ in $ABC_1$ and from $Z$ in $XYZ$ by $h_{C_1}$ and $h_Z$, respectively. Clearly, we get $h_Z=h_{C_1}\frac{c+|AX|}{c}$, and hence  $$\ar{XYZ}=\ar{ABC_1}\frac{c+|AX|}{c}\cdot \frac{c-|AX|}{c}=\ar{ABC_1}\frac{c^2-|AX|^2}{c^2} <\ar{ABC_1}.$$

\medskip
\noindent
\textbf{Case C:} \textit{The common vertex of $ABC$ and $XYZ$ is $C=Z$. }

\medskip
\noindent
\textbf{Subcase C.1:} \textit{$X\in AC$ and $Y\in AB$.}\\
If $Y$ and $B$ are on the same side of the altitude from $C$, then we can rotate $XYZ$ about $Z$ so that $X$ and $Y$ get to the interior of $ABC$ which by \Cref{lemma:Aron}(iv) implies that $XYZ$ is not optimal. If $Y$ and $B$ are on different sides of the altitude from $C$, then $XYZ$ is strictly contained in the special embedded triangle $AB'C$.

\medskip
\noindent
\textbf{Subcase C.2:} \textit{Both $X$ and $Y$ are contained in $AB$.}\\
If $x=y$, then we can open $\sphericalangle YZX$, which increases the area of $XYZ$, so $\ar{XYZ}$ is not maximal.
Suppose that $x=z$. If $Y$ and $A$ are on the same side of the altitude from $C$, then $XYZ$ is strictly contained in the special embedded triangle $AB'C$. If $Y$ and $A$ lie on different sides of the altitude, then the special embedded triangle $A''BC$ satisfies $\ar{XYZ}<\ar{A''BC}$. Indeed, their altitudes from $C$ are the same, and for their bases we have $x=z<a$. Thus $XYZ$ is not maximal. 
Analogously, for $y=z$ a similar argument shows that %the special embedded triangle $AB'C$ has area strictly larger than 
$\ar{XYZ}<\ar{AB'C}$.

\medskip
\noindent
\textbf{Subcase C.3:}\textit{ $X\in AB$ and $Y\in BC$.}\\
We can rotate $XYZ$ about $Z$ such that the images of $X$ and $Y$ lie in the interior of $ABC$, and so, by \Cref{lemma:Aron}(iv), we get that $\ar{XYZ}$ is not maximal.

\medskip
\noindent
 %So, from \Cref{9eset}, and from  \Cref{lem:01}, we know every maximum area embedded isosceles triangle for triangle $ABC$, is one of the special embedded triangles of $ABC$. 
We have shown that none of the triangles $XYZ$ of the $9$ cases in \Cref{f5} is a maximum area embedded isosceles triangle of $ABC$, which completes the proof of \Cref{tfo}(ii). \qed
%\hfill $\square$

\section{Maximum perimeter embedded isosceles triangles  \\ -- Proof of \Cref{tfo}(iii)}\label{sec:max-perim}
In this section, we prove that for any triangle $ABC$, any maximum perimeter isosceles triangle $XYZ$ embedded in $ABC$ shares a vertex and the angle at that vertex with $ABC$. %that is, $ABC$ and $XYZ$ have a common a vertex such that they have the same angle at that vertex.
 First we collect the observations in Lemmas \ref{lem:01} and \ref{lem:m1} concerning maximum perimeter embedded isosceles triangles. 
 %restricting the possible combinatorial arrangements of the vertices $X,Y,Z$ in $ABC$.
\begin{lem}\label{lemmas}
Let $XYZ$ be a maximum perimeter isosceles triangle embedded in $ABC$.  Then~\vspace{-.5em}
\begin{enumerate}[(i)]
 \setlength{\itemsep}{1pt}
  \setlength{\parskip}{1pt}
    \item  each side of $ABC$ contains a vertex of $XYZ$;
    \item\label{L2} no vertex of the triangle $XYZ$ lies in the interior of the triangle $ABC$;   
    \item \label{L4} there is a side of $ABC$ which contains a side of $XYZ$; 
    \item $ABC$ and $XYZ$ share a vertex.
\end{enumerate}
\end{lem}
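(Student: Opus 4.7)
This lemma is an assembly of facts that already follow from Lemmas~\ref{lem:01} and~\ref{lem:m1}; the only ingredient needing justification is a reduction that lets the ``similar copy'' statement in Lemma~\ref{lem:01} apply to our isosceles-only maximization. The plan is to first establish this reduction, and then harvest the conclusions of the earlier lemmas.

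For the reduction, I would observe that any triangle similar to an isosceles triangle is itself isosceles. Hence any similar copy $X'Y'Z'\subseteq ABC$ of $XYZ$ is an admissible competitor in the isosceles-perimeter optimization that $XYZ$ attains, so $\per{X'Y'Z'}\le \per{XYZ}$. In other words, $XYZ$ is simultaneously a maximum-perimeter similar copy of itself embedded in $ABC$.

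With this observation in hand, Lemma~\ref{lem:01}(i) applies with $\Delta_1=XYZ$ and $\Delta_2=ABC$; its conclusions (a), (b), (c) yield precisely statements (iii), (i), (iv) of the present lemma, respectively. Statement (ii) is exactly Lemma~\ref{lem:m1}(ii); alternatively, it admits a one-line direct argument — if some vertex of $XYZ$ were strictly interior to $ABC$, a sufficiently small homothety of $XYZ$ about one of its boundary vertices would remain inside $ABC$ while strictly increasing its perimeter, contradicting maximality.

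I do not foresee any real obstacle here: every step is either a citation or a single-line observation. The only technical point worth being careful about is that the similar-copy argument should cover both direct and mirror similarity, which is automatic because both orientations preserve the isosceles property.
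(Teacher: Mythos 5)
Your proposal matches the paper's treatment: the paper derives this lemma by directly citing Lemmas~\ref{lem:01} and~\ref{lem:m1}, and your explicit reduction (every similar copy of an isosceles triangle is isosceles, so $XYZ$ is also a maximum-perimeter similar copy of itself) is exactly the justification the paper leaves implicit. One caveat: your optional ``one-line'' alternative for (ii) does not work as stated, because by Lemma~\ref{lem:01} at least two vertices of $XYZ$ already lie on the boundary of $ABC$, and a homothety of ratio greater than $1$ centred at one boundary vertex pushes the other boundary vertex outside $ABC$; stick with the citation of Lemma~\ref{lem:m1}(ii), which is what the paper does.
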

We will show that an  isosceles triangle embedded in $ABC$ which does not share an angle with $ABC$ cannot be of minimum perimeter.
Notice that if $ABC$ and $XYZ$ share at least two vertices, then, by \Cref{lemmas}(\ref{L2}), they also share an angle, so we are done. Thus, it is enough to consider those cases where the triangles $XYZ$ and $ABC$ share exactly one vertex, without loss of generality the common vertex is $A$. Note that in this section, we do not assume a special labeling of $ABC$, in particular, we do not necessarily have $|BC|<|AC|<|AB|$.
On the other hand, we assume that $XYZ$ is labeled so that  $|XY| = |YZ|$. We consider the following cases, separately:\\

%\medskip
\noindent
\textbf{Case A:} {\it $X$ and $Z$ lie on the same side of $ABC$.}\\  
 We can always rotate $X$ or $Z$ (for simplicity, assume it is $X$) about $Y$ so that the rotated point $X'$ lies in the interior of $ABC$ and   $\sphericalangle XYZ < \sphericalangle X'YZ$, see \Cref{fig:A}. By the Hinge theorem\footnote{Hinge theorem: Let $XYZ$ be a triangle and let $X'Y'Z'$ be another triangle such that $XY=X'Y', YZ=Y'Z'$, and $\sa XYZ <\sa X'Y'Z'$. Then $\per{XYZ}<\per{X'Y'Z'}$. }, %we get that %the perimeter of $X'YZ$ is strictly larger than the perimeter of 
 $\per{XYZ}<\per{X'YZ}$.
\begin{figure}[h!]
    \centering
    \includegraphics[width=0.4\textwidth]{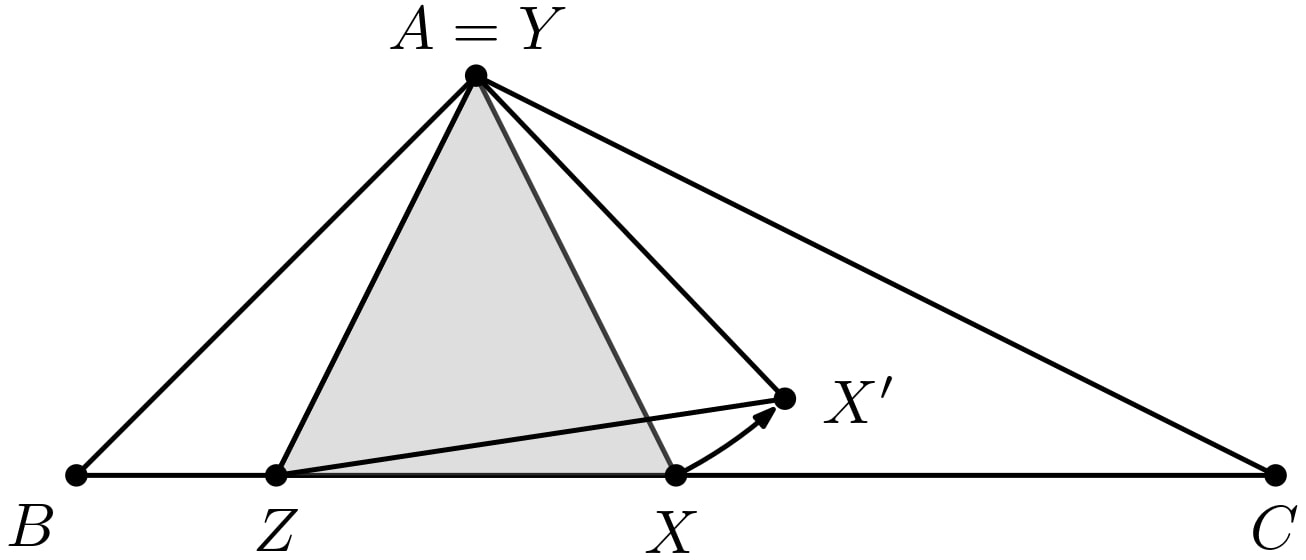}\hspace{2em} \includegraphics[width=0.4\textwidth]{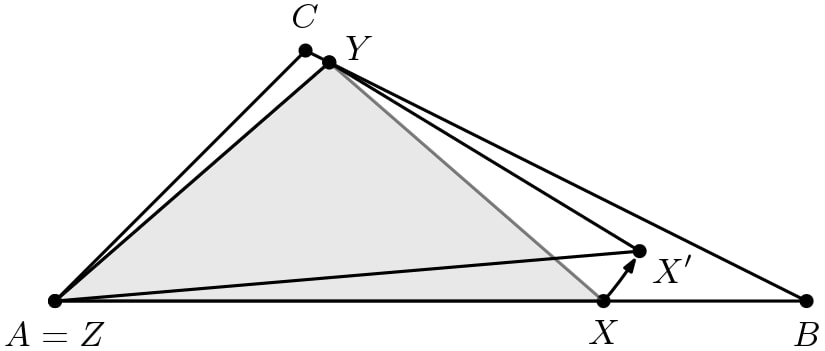}
    \caption{Illustration for Case A}
    \label{fig:A}
\end{figure}

\medskip
\noindent
\textbf{Case B:}  \textit{$X$ and $Z$ lie  on different sides of $ABC$.}\\
We will make use of the following classical lemma on the perimeter of the Minkowski sum of convex bodies.
    \begin{lem}[see e.g.{\cite[exercise 4-7]{YB61}}]\label{lemma:Minkowski}
        Let $K_1$ and $K_2$ be two convex bodies in the plane and let $K=\frac{K_1+K_2}{2}$ be the Minkowski mean of $K_1$ and $K_2$. Then the perimeter of $K$ is equal to the arithmetic mean of the perimeters of $K_1$ and $K_2$. If $K_1$ and $K_2$ are not homothetic triangles, then $K$ is a convex polygon with at least four sides.
    \end{lem}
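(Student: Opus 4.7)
The plan is to derive both parts from the linearity of the support function under Minkowski addition. Recall that for a planar convex body $K$, the support function $h_K(\theta)=\max_{x\in K}(x_1\cos\theta+x_2\sin\theta)$ satisfies $h_{K_1+K_2}=h_{K_1}+h_{K_2}$ and $h_{\lambda K}=\lambda h_K$ for $\lambda\ge 0$. Combined with Cauchy's formula $\perim(K)=\int_0^{2\pi}h_K(\theta)\,d\theta$, integrating the identity $h_{(K_1+K_2)/2}=(h_{K_1}+h_{K_2})/2$ over $[0,2\pi]$ yields $\perim((K_1+K_2)/2)=(\perim(K_1)+\perim(K_2))/2$ at once. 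Alternatively, one may first verify the additivity of perimeter on convex polygons directly from the structural description below, and then pass to general convex bodies by polygonal approximation, using Hausdorff continuity of the perimeter.

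For the structural claim I would use the standard description of polygonal Minkowski sums: for convex polygons $P_1,P_2\subset\RR^2$, let $\N(P_i)$ denote the set of outer unit normals to the edges of $P_i$. Then $P_1+P_2$ is a convex polygon whose edges are in bijection with $\N(P_1)\cup \N(P_2)$; for each $n$ in the union, the corresponding edge of $P_1+P_2$ is a translate of the Minkowski sum of the $P_1$- and $P_2$-edges with outer normal $n$ (either of which may degenerate to a point). This follows from the observation that the face of $P_1+P_2$ maximizing any linear functional equals the sum of the corresponding maximizing faces of $P_1$ and $P_2$.

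Specializing to triangles: the three outer unit normals of a triangle determine its edge directions together with their orientations, hence its interior angles, and so determine the triangle up to homothety (a triangle is cut out by three half-planes, fully specified by its normals and support-function values, and varying the support-function values yields a homothetic family). In particular, two triangles $T_1,T_2$ satisfy $\N(T_1)=\N(T_2)$ if and only if they are homothetic. Hence, when $K_1,K_2$ are non-homothetic triangles, $\N(K_1)\ne\N(K_2)$; since $|\N(K_i)|=3$, we conclude $|\N(K_1)\cup\N(K_2)|\ge 4$, so $(K_1+K_2)/2$ has at least four edges. The only genuine work is establishing the polygonal Minkowski-sum description; once that and the homothety $\Leftrightarrow$ equal-normal-set correspondence for triangles are in hand, the rest is combinatorial bookkeeping on normal directions.
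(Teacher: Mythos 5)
Your proof is correct, but note that the paper does not prove this lemma at all: it is quoted as a classical fact with a pointer to Yaglom--Boltyanskii (exercise 4-7), so there is no internal argument to compare against. What you supply is the standard self-contained justification, and both halves are sound. The perimeter identity follows exactly as you say from $h_{K_1+K_2}=h_{K_1}+h_{K_2}$, positive homogeneity, and Cauchy's formula $\perim(K)=\int_0^{2\pi}h_K(\theta)\,d\theta$; the edge count follows from the fact that for convex polygons the face of $P_1+P_2$ in direction $u$ is $F(P_1,u)+F(P_2,u)$, so the edges of the sum are indexed by $\N(P_1)\cup\N(P_2)$, and two triangles share the same outer-normal triple precisely when they are (positively) homothetic, giving at least $4$ normals otherwise. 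Two minor points worth flagging: the lemma's second sentence as stated only makes sense when $K_1$ and $K_2$ are both polygons (a disk summand gives a non-polygonal $K$), so it should be read as ``if $K_1$ and $K_2$ are triangles that are not homothetic to each other''---which is the only case the paper uses; and if one allows homotheties with negative ratio, your biconditional ``$\N(T_1)=\N(T_2)$ iff homothetic'' should be weakened to the one implication you actually need (equal normal sets imply homothetic with positive ratio), since negatively homothetic triangles have disjoint normal sets and still produce a hexagonal sum, consistent with the lemma.
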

\medskip
\noindent
\textbf{Subcase B.1:} {\it The common vertex of $ABC$ and $XYZ$ is $A=Y$.}

\noindent
If none of $X$ and $Z$ is on the side opposite to $Y$, then $XYZ$ and $ABC$ have a common angle at $Y$.
Thus, we can assume that either $X$ or $Z$ is on the side opposite to $Y$, say it is $X$.

The idea is to show that the triangle $XYZ$ is strictly contained in the Minkowski mean $M$ of two other non-homothetic isosceles triangles embedded in $ABC$, thus, by \Cref{lemma:Minkowski}, one of these two must have a strictly larger perimeter (by the fact that if $\mathcal C_1,\mathcal C_2$ are two convex planar sets such that $\mathcal C_1\subseteq \mathcal C_2$, then $\per{\mathcal C_1}\leq\per{\mathcal C_2}$ 
     \cite[12.10.2]{berger09geometryII}).
     
 Let $\delta$ be a constant satisfying $\delta < \min\{ |XB|, |XC|\}$. Define the points $X^1$ and $X^2$ by translating $X$ by $\delta$ towards $C$ and $B$, respectively. Let $Z^1$ and $Z^2$ be such that they are contained on the side $AB$ with $|YZ^1| = |YX^1|$ and $|Y Z^2| = |Y X^2|$, see \Cref{fig:A2}.
 \begin{figure}[h!]
     \centering
      \includegraphics[width=0.4\textwidth]{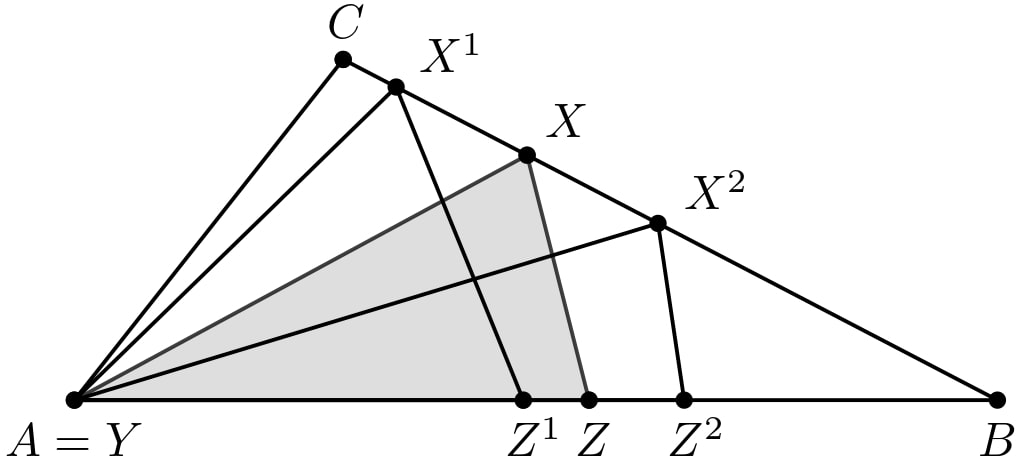}
     \caption{Illustration for Subcase B.1.}
     \label{fig:A2}
 \end{figure}
         Let $M$ be the Minkowski mean of $ X^1YZ^1$ and $X^2YZ^2$.  The vertex $Y$ is contained in both triangles, thus it is also contained in $M$. It is also easy to see that $X \in M$ since $ X = \frac 1 2 (X^1 + X^2)$.
         We show that $Z$ is contained in the segment between $Y$ and $\frac 1 2 (Z^1 + Z^2)$, which implies $Z \in M$.
         To this end, observe that the segment $YX$ is a median of the triangle $X^1YX^2$ and thus $|YX| < \frac 1 2 \round{|YX^1| + |YX^2|}$, which directly implies that $|YZ| < \frac 1 2 \round{|YZ^1| + |YZ^2|}$.

\medskip
\noindent
\textbf{Subcase B.2:} {\it The common vertex of $ABC$ and $XYZ$ is $A=Z$ and both $X$ and $Y$ are in the interior of the side of $ABC$ opposite to $Z$.}\\
        
\noindent        
Define the points $X^1$ and $X^2$ by translating $X$ by $\delta$ towards $C$ and $B$, respectively. We choose $\delta$ to be small enough such that there are points  $Y^1,Y^2$ in the segment $BC$ with  $|Y^1Z| = |Y^1 X^1|$ and $|Y^2 Z| = |Y^2 X^2|$, see \Cref{fig:B2}. Let $M$ be the Minkowski mean of $ X^1YZ^1$ and $X^2YZ^2$. As before, it is clear that the vertices $X$ and $Z$ are contained in $M$.
         
         \begin{figure}[ht]
        \centering
         \includegraphics[width=0.4\textwidth]{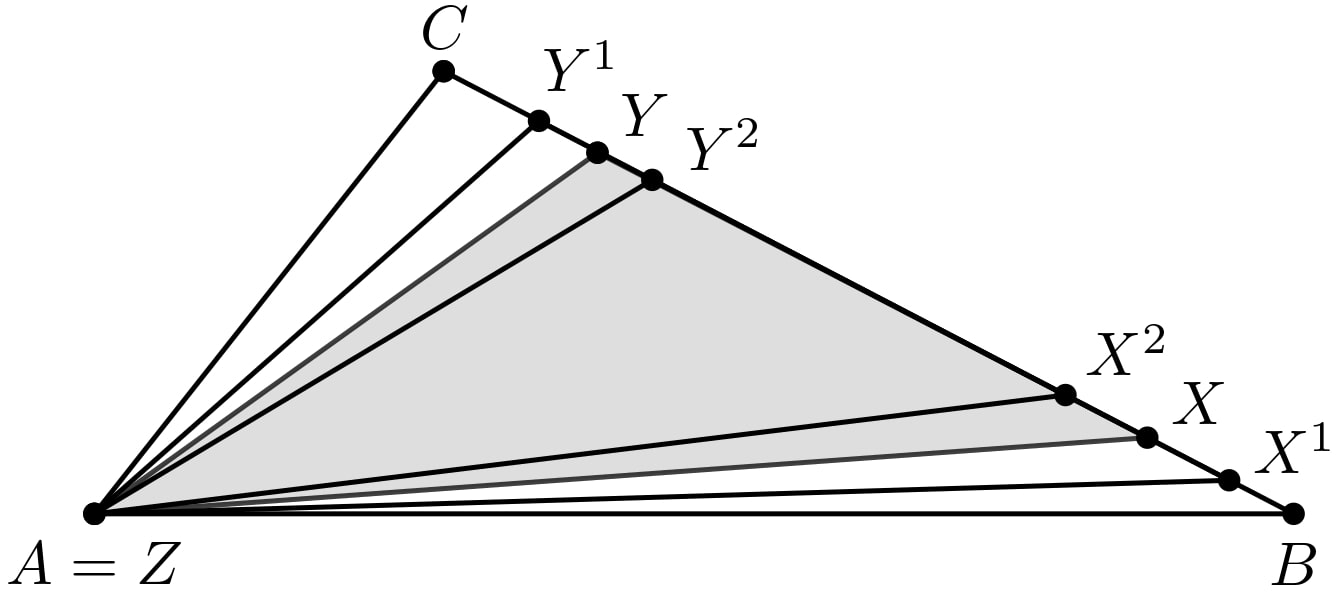}
         \caption{Illustration for Subcase B.2.}\label{fig:B2}
         \end{figure}
         
         \noindent
         To argue that $Y \in M$, we shall show that $Y$ is contained in the segment between $X$ and $\frac 1 2 (Y^1 + Y^2)$. To simplify the calculations, we move and scale the triangle so that $A = (0,1)$, $B=(b,0)$, $C = (c,0)$ and $X = (x',0)$ with $b < x' < c$. Note that since $\sphericalangle ZXY$ is acute, $x' < 0$. For each $b < x < 0$, let $X_x = (x,0)$ and $Y_x$ be the point in $BC$ such that $|ZY_x| = |Y_x X_x|$ and define $f(x) = |X_x Y_x|$, see \Cref{fig:B2-axes}. Observe that $Y$ is contained in the segment between $X$ and $\frac 1 2 (Y^1 + Y^2)$ iff $\frac 1 2 (f(x' - \delta) + f(x'+\delta)) > f(x')$. Thus, it is sufficient to show that $f(x)$ is a convex function on $(b,0)$.   
         \begin{figure}
             \centering
              \includegraphics[width=0.45\textwidth]{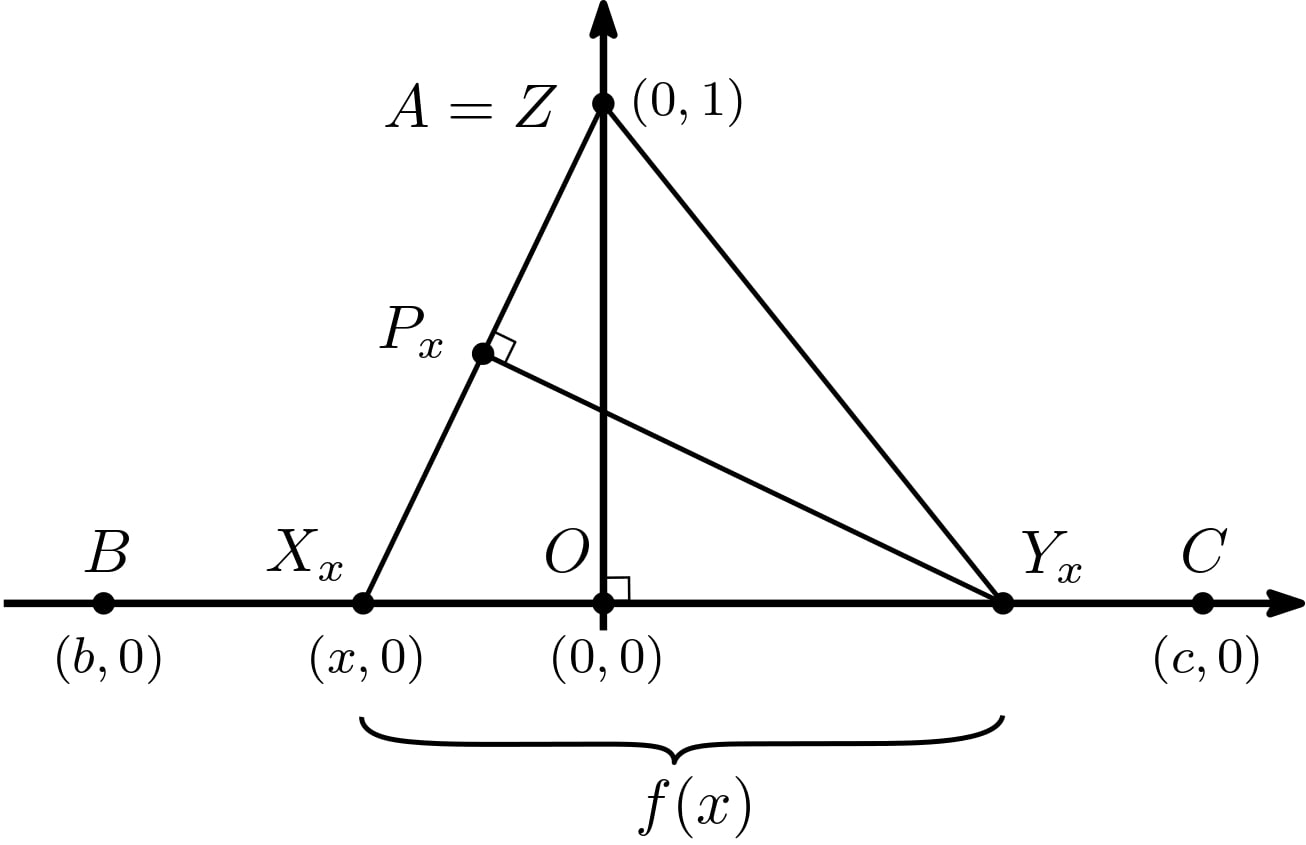}
             \caption{Embedding the instance in $\RR^2$.}
             \label{fig:B2-axes}
         \end{figure}
         To find an analytic formula for $f(x)$, we introduce some auxiliary points. Let $O = (0,0)$ and  $P_x$ be the orthogonal projection of $Y_x$ to the segment $X_xZ$. Note that $P_x$ is the midpoint of $XZ$. Then the triangles $X_xP_xY_x$ and $X_x O Z$ are similar, which yields
         $$
            f(x) = |Y_x X_x| = |X_xZ| \cdot \frac{|X_xP_x|}{|X_x O|} = \sqrt{1+x^2} \cdot \frac{\sqrt{1+x^2}/2}{-x} = \frac{1+x^2}{-2x}
         $$
            The second derivative of $f$ is $f''(x) = - 1/x^3$, thus $f(x)$ is convex on the interval $(b,0)$.
        
\medskip
\noindent
\textbf{Subcase B.3:} {\it The common vertex of $ABC$ and $XYZ$ is $A=Z$ and $X$, $Y$ lie in the interior of different sides of $ABC$.}\\ %and $X$ is on the side opposite to $Z$.
    Firstly, since $X$ and $Z$ lie on different sides of $ABC$, we get that $X$ is on the side opposite to $Z$, see \Cref{fig:B3}. If  $\sphericalangle AXB$ is obtuse, then we can rotate the triangle $XYZ$ about $Z$ and obtain a copy of $XYZ$ which has two vertices in the interior of $ABC$, thus by \Cref{lemmas}, $XYZ$ cannot be of maximum perimeter. Therefore,  $\sphericalangle AXB$ and consequently $\sphericalangle ACB$ are acute.
         
    Define the points $X^1$ and $X^2$ by translating $X$ by $\delta$ towards $C$ and $B$, respectively. We choose an increment $\delta \in (0,1/c)$ which is small enough such that there are points  $Y^1,Y^2$ in the segment $AC$ with  $|Y^1Z| = |Y^1 X^1|$ and $|Y^2 Z| = |Y^2 X^2|$. Let $M$ be the Minkowski mean of $ X^1YZ^1$ and $X^2YZ^2$. The vertices $X$ and $Z$ are clearly contained in $M$.
          \begin{figure}[h!]
     \centering
      \includegraphics[width=0.33\textwidth]{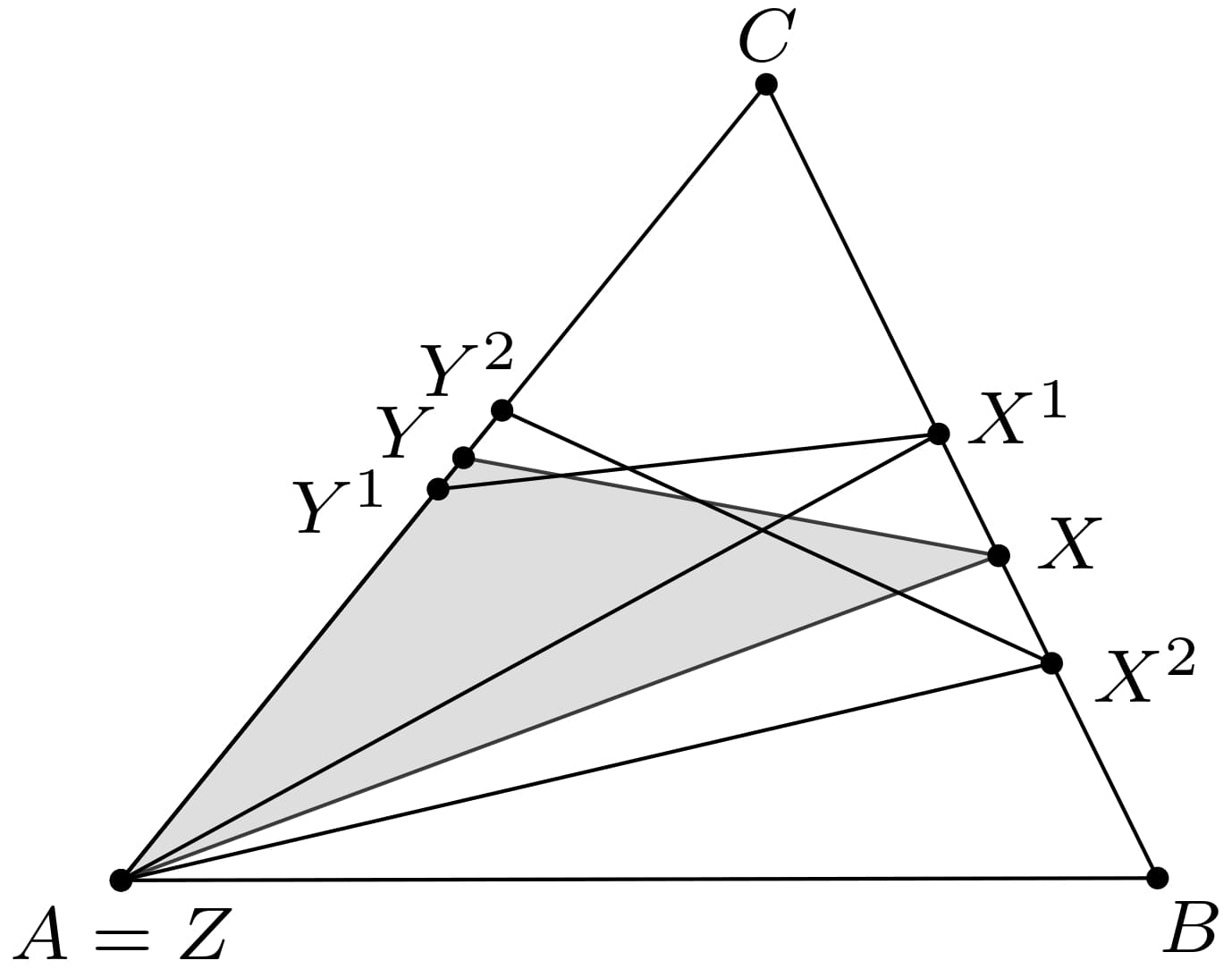}
         \caption{Illustration for Case B.3.}
     \label{fig:B3}
 \end{figure}
 
         To prove that $Y \in M$, we shall show that $Y$ is contained in the segment between $Z$ and $\frac 1 2 (Y^1 + Y^2)$. Again, we translate and scale of the triangle so that $A = (0,1)$, $B=(b,0)$, $C = (c,0)$ and $X = (x',0)$ with $b < x' < c$. Since $\sphericalangle AXB$ is acute, we have $x'\geq 0$. For each $x \in [x' -\delta, x'+\delta]$, let $X_x = (x,0)$ and $Y_x$ be the point in $ZC$ such that $|Z Y_x| = |Y_x X_x|$ and define $f(x) = |Z Y_x|$, see \Cref{fig:B3-axes}. 
        Note that since $x'\geq 0$ and $\delta$ is smaller than $1/c$, each $x \in [x' -\delta, x'+\delta]$ satisfies $-1/c < x$.  
        We want to show that the function $f(x)$ is convex, which then directly implies that $Y$ is contained in the segment between $Z$ and $\frac 1 2 (Y^1 + Y^2)$. %iff $\frac 1 2 (f(x' - \delta) + f(x'+\delta)) > f(x')$. Thus, it is sufficient to show that $f(x)$ is a convex function.   \\
        \begin{figure}
            \centering
         \includegraphics[width=0.4\textwidth]{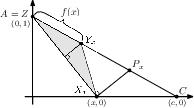}
            \caption{Embedding the instance in $\RR^2$.}
            \label{fig:B3-axes}
        \end{figure}
        Let $P_x  = (p_1(x),p_2(x))$ be a point on $AC$ such that the segment $P_xX_x$ is orthogonal  to $AX_x$. %By the Intercept theorem 
        Note that $P_x$ satisfies $|Z P_x| = 2f(x)$.
        
        Let $\gamma$ denote the angle $\sphericalangle ACX_x$, then
         $p_2(x) = 1 - 2\sin(\gamma)\cdot f(x)$ which is concave if and only if $f(x)$ is convex.
          Since $X_xP_x$ is orthogonal to $AX_x$ and $P_x$ is contained in $AC$, we get the following equations on $p_1(x)$ and $p_2(x)$
                     \begin{equation*}
                 p_1(x) \cdot x - p_2(x) = x^2,  \  \  \  \
               p_1(x) + cp_2(x) = c,
             \end{equation*}
            which gives
        $
            p_2(x) 
            = 
            \frac{cx - x^2}{cx+1}.
        $
        Taking the second derivative, we get
        $$
            p''_2(x) = - \frac{2(1+c^2)}{(1+cx)^3} < 0 \text{ for all } x \in \round{-\frac 1 c, \infty}.
        $$
       
\noindent We proved that none of the triangles of types A.1-A.2 and B.1-B.3 is a maximum perimeter embedded isosceles triangle of $ABC$, which completes the proof of \Cref{tfo}(iii). \qed

\section{Minimum perimeter enclosing triangles \\-- Proof of \Cref{thm:main-counterexamples}}\label{sec:min-perim}

In this section, we prove that any smallest perimeter isosceles container of a triangle is either a special container or one of two non-special containers defined in the next subsection. We also show that this is the shortest possible characterization of isosceles containers, that is, any of the five examples 
can be realized as a minimum perimeter isosceles container for some triangle $ABC$.
Now, we define two non-special isosceles containers that can be optimal.

\subsection{Two examples for non-special minimum perimeter containers of a triangle}\label{sec:examples}

Let $P$ be a point in $\RR^2$ and $l$ a line such that $P \not\in l$ and let $m$ denote the distance of $P$ from $l$. 
Define an isosceles triangle $PRS$ such that $S$ and $R$ lie on $l$ and its apex angle $\gamma$ is in $R$, see \Cref{fig:segedlemma}. Let $p$ denote the perimeter of $PRS$. Note that $p$ can be considered as a function of $\gamma$.
%Then the perimeter of $SPR$ can be written as $\per{SPR}:= p(\gamma) = m \round{\frac{2}{\sin\ga}+\frac{1}{\cos(\ga/2)}}$.
\begin{figure}[h]
  \centering
  \includegraphics[width=0.3\textwidth]{./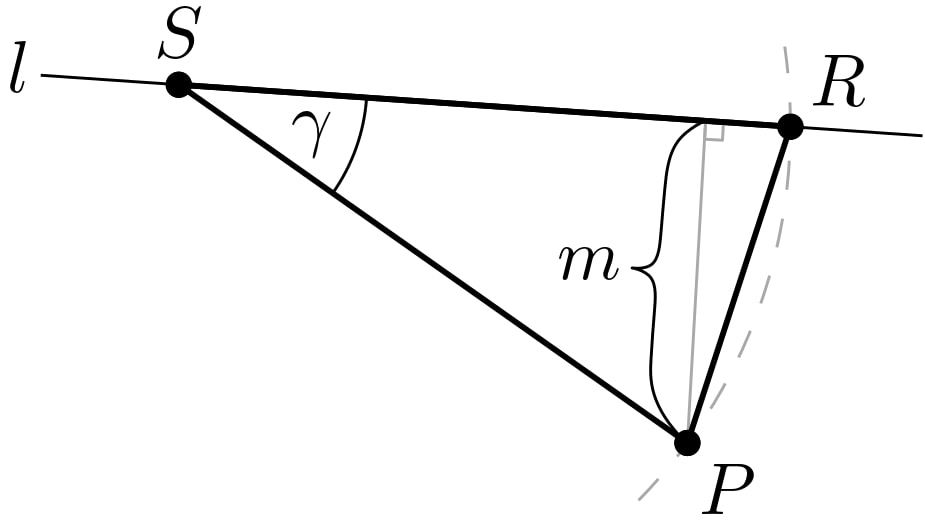}
  \caption{Illustration for \Cref{ltol}}
  \label{fig:segedlemma}
\end{figure}
\begin{prop}\label{ltol}
The function $p$ has a unique minimum at
\begin{equation}\label{eqfig13}
    \gamma^*=4 \tan^{-1}\round{\frac{1}{2}\round{1+\sqrt{5}-\sqrt{2(1+\sqrt 5)}}}  \approx 76.3466^{\circ}. 
    \end{equation}
    %Moreover, $p$ .
\end{prop}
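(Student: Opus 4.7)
The plan is to express the perimeter $p$ explicitly as an elementary function of $\gamma\in(0,\pi)$, minimize it by single-variable calculus, and convert the resulting critical value to the arctangent form displayed in \eqref{eqfig13}.

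\emph{Setup.} Place $l$ as the $x$-axis and $P=(0,m)$. Since $PRS$ is isosceles with apex $R$ and apex angle $\gamma$, we have $|RP|=|RS|$; after a reflection we may assume that the ray $RS$ points in the positive $x$-direction, whence $RP$ makes angle $\gamma$ with $l$. Requiring the $y$-coordinate of $P$ to equal $m$ gives $|RP|=|RS|=m/\sin\gamma$, and the base satisfies $|PS|=m/\cos(\gamma/2)$ (either by the law of cosines, or since each base angle equals $(\pi-\gamma)/2$ and $P$ lies at height $m$ above $l$). Using $\sin\gamma=2\sin(\gamma/2)\cos(\gamma/2)$ and letting $t=\gamma/2\in(0,\pi/2)$,
\[
p(\gamma)=\frac{2m}{\sin\gamma}+\frac{m}{\cos(\gamma/2)}=\frac{m(1+\sin t)}{\sin t\,\cos t}.
\]

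\emph{Unique critical point.} A direct differentiation shows that the numerator of $p'(t)$ equals, up to sign, $(1+\sin t)(\sin^2 t+\sin t-1)$. Since $1+\sin t>0$ on $(0,\pi/2)$, critical points are precisely the solutions of
\[
\sin^2 t+\sin t-1=0, \qquad \text{i.e.,}\qquad \sin(\gamma/2)=\frac{\sqrt 5-1}{2}=\frac{1}{\varphi}.
\]
Because $s\mapsto s^2+s-1$ is strictly increasing for $s\ge 0$ and $\sin t$ is strictly increasing on $(0,\pi/2)$, this determines a unique $t^*\in(0,\pi/2)$; a sign check on the same factor then shows that $p$ is strictly decreasing on $(0,t^*)$ and strictly increasing on $(t^*,\pi/2)$. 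Together with $p(t)\to\infty$ as $t\to 0^+$ and as $t\to\pi/2^-$, this proves that $\gamma^*=2t^*$ is the unique global minimum.

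\emph{Arctangent form.} Using the half-angle identity $\tan(\gamma/4)=(1-\cos(\gamma/2))/\sin(\gamma/2)$ together with
\[
\cos(\gamma/2)=\sqrt{1-\bigl((\sqrt 5-1)/2\bigr)^2}=\sqrt{(\sqrt 5-1)/2},
\]
rationalizing the resulting fraction by multiplying numerator and denominator by $\sqrt 5+1$, and applying the identity $(\sqrt 5+1)\sqrt{(\sqrt 5-1)/2}=\sqrt{2(\sqrt 5+1)}$ (which follows from $(\sqrt 5+1)^2(\sqrt 5-1)=4(\sqrt 5+1)$), one obtains
\[
\tan(\gamma^*/4)=\frac{1}{2}\bigl(1+\sqrt 5-\sqrt{2(1+\sqrt 5)}\bigr),
\]
which is exactly \eqref{eqfig13}. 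The argument is elementary throughout; the only step requiring some care is the final nested-radical simplification.
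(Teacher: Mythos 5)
Your proposal follows the same route as the paper's proof outline: you derive the identical perimeter formula $p(\gamma)=m\left(\tfrac{2}{\sin\gamma}+\tfrac{1}{\cos(\gamma/2)}\right)$ and then minimize it by single-variable calculus, which is exactly the ``elementary analysis'' the paper invokes without detail. Your computations check out (the critical-point equation $\sin^2 t+\sin t-1=0$ with $t=\gamma/2$, the monotonicity on either side, and the nested-radical reduction to $\tan(\gamma^*/4)=\tfrac12\bigl(1+\sqrt5-\sqrt{2(1+\sqrt5)}\bigr)$ are all correct), so your write-up is a valid and more complete version of the paper's argument.
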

\begin{proof}[Proof outline]
It is easy to see that 
$ |PR|= |RS|=\frac{m}{\sin\ga}$ and  $|PS|=\frac{m}{\sin(90^{\circ}-\ga/2)}=\frac{m}{\cos(\ga/2)},$. Hence $\per{PRS}=m\round{\frac{2}{\sin\ga}+\frac{1}{\cos(\ga/2)}}.$
Elementary analysis shows that the function $f(x)=\frac{2}{\sin x}+\frac{1}{\cos (x/2)}$ is strictly decreasing in $(0^{\circ},\gamma^*]$ and strictly increasing in $[\gamma^*, 180^{\circ})$. Thus it has a unique minimum in $0\le x\le 180^{\circ}$ that is taken at the value specified in \Cref{eqfig13}.
\end{proof}
\noindent
\begin{ex}\label{ex0}
Let $PRS$ be an isosceles triangle with apex angle $\ga^*$ which is defined as in \Cref{ltol}.
Now we take an acute triangle $ABC$ in $PRS$ such that $ABC$ and $PRS$ have exactly one common vertex at $A=P$ and $B,C \in SR$ (see Figure \ref{fig:Ex1}). Moreover,  the largest angle $\ga$ of $ABC$ is at $C$ with $\ga<\ga^*$ being close to $\ga^*$ (e.g., $76^{\circ}$) and $ABC$ is almost isosceles 
($|AC|\approx |BC|$).%, where $a$ and $b$ are the size of the smaller sides. 
\end{ex}
\begin{figure}[h!]
  \centering
  \includegraphics[width=0.45\textwidth]{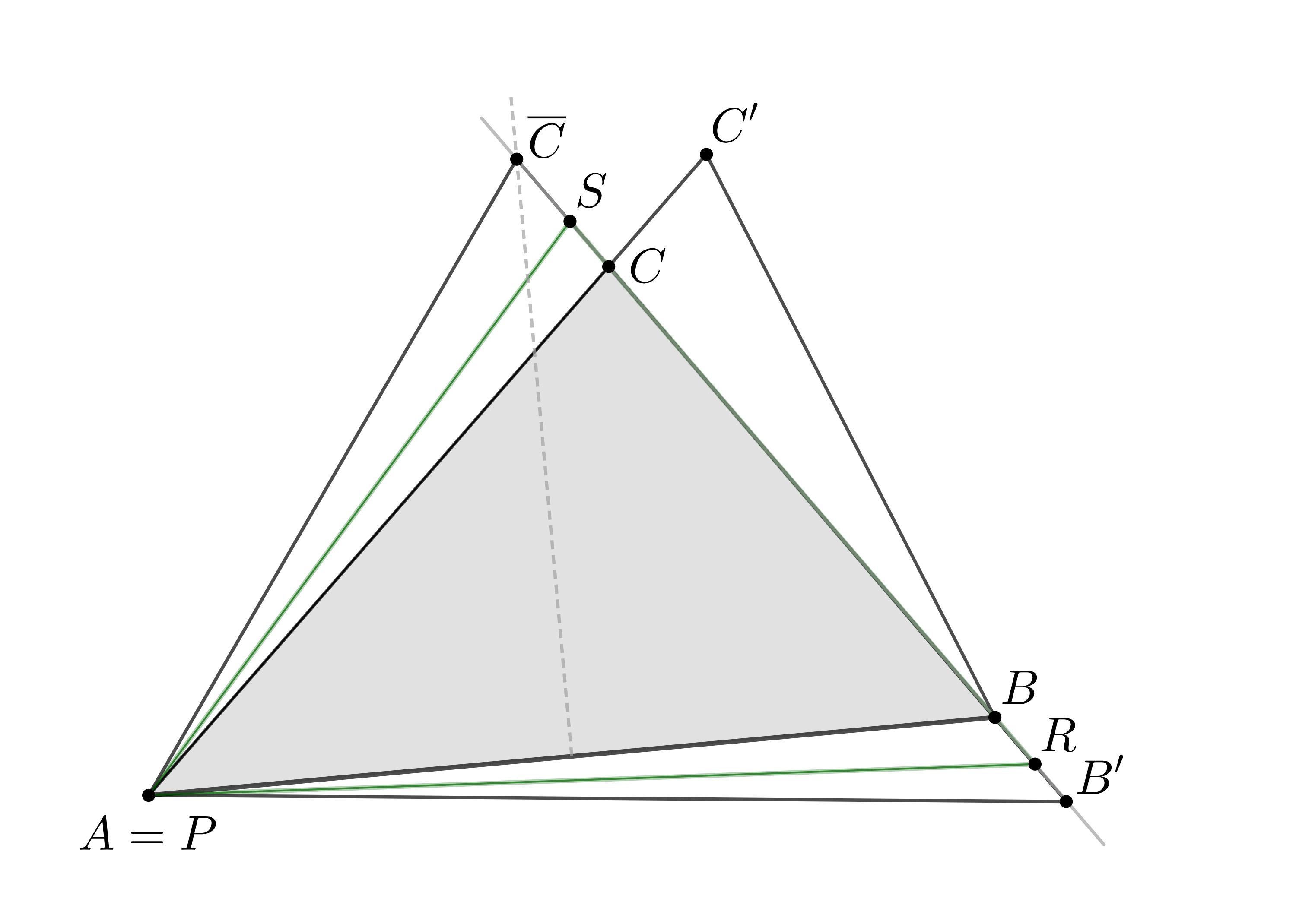}
  \caption{Illustration for Example \ref{ex0}.}
  \label{fig:Ex1}
\end{figure}
\begin{claim}\label{claim:ex1}
The perimeter of $PRS$ is strictly smaller  than  the perimeter of any special container of $ABC$.
 \end{claim}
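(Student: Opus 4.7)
The plan is to use \Cref{cspec} to reduce to three candidate special containers, observe that three of the four relevant triangles belong to the one-parameter family of \Cref{ltol}, and handle the remaining one by a continuity argument around the isosceles limit.

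First, since the $ABC$ of \Cref{ex0} satisfies $|BC|<|AC|<|AB|$ (i.e.\ $a<b<c$), \Cref{cspec} guarantees that any minimum perimeter special container of $ABC$ belongs to $\{AB'C,\,ABC',\,AB\overline{C}\}$. It therefore suffices to prove $\per{PRS}<\per{T}$ for each of these three triangles.

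The central observation is that $PRS$, $AB'C$ and $AB\overline{C}$ all belong to the family of isosceles triangles of \Cref{ltol} associated with off-line vertex $P=A$ and base line $l$ equal to the line through $BC$. Indeed, $R,S\in l$ by the assumption $B,C\in SR$; $B'$ lies on the ray $\vec{CB}\subset l$ and $|AC|=|B'C|$, so the two equal sides of $AB'C$ meet at $C\in l$; and $\overline{C}\in l$ by its definition as the intersection of the perpendicular bisector of $AB$ with $l$, so $|A\overline{C}|=|B\overline{C}|$ and the two equal sides of $AB\overline{C}$ meet at $\overline{C}\in l$. The corresponding apex angles at the on-line vertex are $\gamma^*$, $\gamma=\sphericalangle ACB$ and $\overline{\gamma}=\sphericalangle A\overline{C}B$, respectively. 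By hypothesis, $\gamma\neq\gamma^*$; moreover $\overline{\gamma}$ depends continuously on the shape of $ABC$ and tends to $\gamma$ as $|AC|\to|BC|$ (since $\overline{C}=C$ in that limit), so for a generic choice of parameters in \Cref{ex0} one also has $\overline{\gamma}\neq\gamma^*$. Invoking \Cref{ltol}, which asserts that the perimeter function in this family has a strict unique minimum at apex angle $\gamma^*$, we conclude $\per{AB'C}>\per{PRS}$ and $\per{AB\overline{C}}>\per{PRS}$.

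The triangle $ABC'$ has its apex at $A$, hence does not belong to the above family, and must be treated by a direct continuity argument. In the exact isosceles limit $|AC|=|BC|=1$, $\gamma=\gamma^*$, one has $\per{PRS}=\per{ABC}=2+2\sin(\gamma^*/2)$, whereas $ABC'$ is isosceles with equal sides $|AB|=|AC'|=2\sin(\gamma^*/2)$ and apex angle $90^\circ-\gamma^*/2$ at $A$, giving $\per{ABC'}=4\sin(\gamma^*/2)\bigl(1+\sin(45^\circ-\gamma^*/4)\bigr)$. A direct numerical check yields $\per{ABC'}-\per{PRS}\approx 0.32>0$, a gap bounded away from zero; since both perimeters are continuous in the parameters of $ABC$, this strict inequality persists in a full neighbourhood of the isosceles limit, which contains the $ABC$ of \Cref{ex0}. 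The main obstacle I expect is the $AB\overline{C}$ case: in the isosceles limit $AB\overline{C}$ coincides with $PRS$ and with $ABC$ itself, so the strict inequality is degenerate there, and verifying $\overline{\gamma}\neq\gamma^*$ for the concrete parameters of \Cref{ex0} requires a short explicit perturbative computation of $\overline{C}$ as a function of the asymmetry $|AC|-|BC|$ and of $\gamma^*-\gamma$.
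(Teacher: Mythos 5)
Your proposal is correct and follows essentially the same route as the paper: reduce to $AB'C$, $ABC'$, $AB\overline{C}$ via \Cref{cspec}, then observe that $PRS$, $AB'C$, and $AB\overline{C}$ all belong to the one-parameter family of \Cref{ltol} (apex on the line $RS$, base vertex at $A=P$), whose perimeter is uniquely minimized at apex angle $\gamma^*$, so it suffices that neither coincides with $PRS$. The only minor divergence is in dispatching $ABC'$: the paper compares it to $AB'C$ via the ratio $\per{ABC'}>d\cdot\per{ABC}\approx d\cdot\per{AB'C}$ for a fixed $d>1$, whereas you compute both perimeters explicitly in the isosceles limit and invoke continuity --- two perturbative arguments of the same strength.
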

\begin{proof}[Proof outline]
By \Cref{cspec}, it is enough to show that 
 the special containers $AB'C$, $ABC'$, and $AB\overline{C}$ have larger perimeter than $PRS$. 
First observe that, since $a\approx b<c$, $ABC$ is an `almost' isosceles triangle, thus the perimeter $\per{AB'C}\approx \per{ABC}$ and $\per{ABC'}> d \cdot \per{ABC}$, for a fixed $d>1$. This implies that $\per{AB'C} < \per{ABC'}$.
Now we show that $PRS$ has perimeter smaller than $\per{AB'C}$ and $\per{AB\overline{C}}$. 
Note that, each of $PRS$, $AB'C$ and $AB\overline{C}$ are isosceles triangles with base vertex $A=P$ and legs on the line $RS$.
By \Cref{ltol}, the smallest perimeter isosceles triangle under these conditions is $PRS$.
Thus, it is enough to guarantee that the triangles $AB'C$ and $AB\overline{C}$ do not coincide with $PRS$ which follows from the fact that $ABC$ and $PRS$ has exactly one common vertex.
\end{proof}
\noindent
Now we turn to our second example. We start by taking the points $A=P=(0,0)$, $C=(1,v)$ and $S_x=(x,0)$ and define $R_x$ to be the point on the $\vec{S_xC}$ ray so that $|PR_x|=|R_xS_x|$. The next claim follows by elementary calculations, its proof is omitted.
\begin{prop}\label{claim:ex2}
For any $x \in(1,2)$, the perimeter of $PR_xS_x$ can be expressed as  
\begin{equation}\label{eqex2} \per{PR_xS_x}=f_v(x)= x\round{1+\sqrt{1+\frac{v^2}{(1-x)^2}}}.\end{equation}
and for any $v \in [0.56, \sqrt{3})$, the function $f_v$ has a unique minimum in $(1,2)$ denoted by $x^*_v$.\footnote{The formula for $x_v^*$ is given in the Appendix.}
\end{prop}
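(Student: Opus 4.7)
The plan is to prove the two claims in turn by setting up coordinates and then analyzing $f_v$ with elementary calculus. For the formula, I would place $P=(0,0)$, $C=(1,v)$, $S_x=(x,0)$, and observe that the isosceles condition $|PR_x|=|R_xS_x|$ forces $R_x$ to lie on the perpendicular bisector of $PS_x$, i.e.\ the vertical line with first coordinate $x/2$. Parametrizing the ray $\vec{S_xC}$ as $S_x+t(C-S_x)$ and equating its first coordinate to $x/2$ gives $t=x/(2(x-1))$, which is positive precisely because $x\in(1,2)$, so $R_x$ indeed lies on the ray. A short calculation then yields $|PR_x|=|R_xS_x|=(x/2)\sqrt{1+v^2/(x-1)^2}$, so
$$\per{PR_xS_x}=x+2\cdot(x/2)\sqrt{1+v^2/(x-1)^2}=f_v(x),$$
as claimed (noting that $(1-x)^2=(x-1)^2$).

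For the uniqueness of the minimum, I would write $f_v(x)=x(1+g(x))$ with $g(x)=\sqrt{1+v^2/(x-1)^2}$, so that
$$f_v'(x)=1+g(x)-\frac{xv^2}{(x-1)^3\,g(x)}.$$
Setting $f_v'(x)=0$, multiplying through by $g(x)$, and substituting $g(x)^2=1+v^2/(x-1)^2$, the two $v^2/(x-1)^2$ terms cancel and I arrive at $g(x)=v^2/(x-1)^3-1$. Squaring both sides and clearing denominators yields, with $u=x-1\in(0,1)$, the polynomial critical equation
$$u^4+2u^3=v^2,\qquad\text{equivalently}\qquad u^3(u+2)=v^2.$$

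Uniqueness now follows from strict monotonicity: the function $h(u):=u^3(u+2)$ has derivative $h'(u)=2u^2(2u+3)>0$ on $(0,\infty)$, so $h$ restricts to a strictly increasing bijection from $(0,1)$ onto $(0,3)$. Hence for every $v\in(0,\sqrt{3})$---and in particular for $v\in[0.56,\sqrt{3})$---the equation $h(u)=v^2$ has a unique root $u^*\in(0,1)$, giving a unique critical point $x_v^*=1+u^*\in(1,2)$. The squaring step introduces no spurious root, since from $u^3(u+2)=v^2$ one gets $u^3=v^2/(u+2)<v^2$, so the right-hand side $v^2/(x_v^*-1)^3-1$ of the unsquared equation is indeed positive.

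Finally, to confirm that $x_v^*$ is a minimum rather than a maximum or inflection point, I would check the sign of $f_v'$ at the endpoints of $(1,2)$. As $x\to 1^+$, $g(x)\to\infty$ and a short expansion gives $f_v'(x)\to-\infty$; at $x=2$, direct substitution yields $f_v'(2)=(\sqrt{1+v^2}+1-v^2)/\sqrt{1+v^2}$, which is positive whenever $v^2<3$ (immediate for $v^2<1$, and equivalent to $v^2<3$ after squaring the inequality $\sqrt{1+v^2}>v^2-1$ when $v^2\ge 1$). Thus $f_v'$ changes sign from negative to positive across the unique critical point $x_v^*$, which is therefore the unique minimum on $(1,2)$. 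The main obstacle is the algebraic manipulation that reduces the transcendental equation $f_v'(x)=0$ to the polynomial $u^3(u+2)=v^2$; once this reduction is achieved, monotonicity of $h$ and the endpoint sign analysis take care of the rest.
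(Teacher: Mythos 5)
Your proof is correct; the paper itself omits this argument, stating only that the claim ``follows by elementary calculations,'' so your write-up is exactly the kind of filling-in intended. The coordinate computation of the perimeter, the reduction of $f_v'(x)=0$ to $u^3(u+2)=v^2$ with $u=x-1$ (which matches the paper's numerics, e.g.\ $u\approx 0.57517$ gives $u^3(u+2)\approx 0.49=0.7^2$, and is consistent with the quartic-root formula for $x_v^*$ in the Appendix), the check that squaring introduces no spurious root, and the endpoint sign analysis are all sound; in fact your argument establishes the unique interior minimum for every $v\in(0,\sqrt{3})$, which contains the stated range $[0.56,\sqrt{3})$.
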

\begin{figure}[h!]
  \centering
  \includegraphics[width=0.38\textwidth]{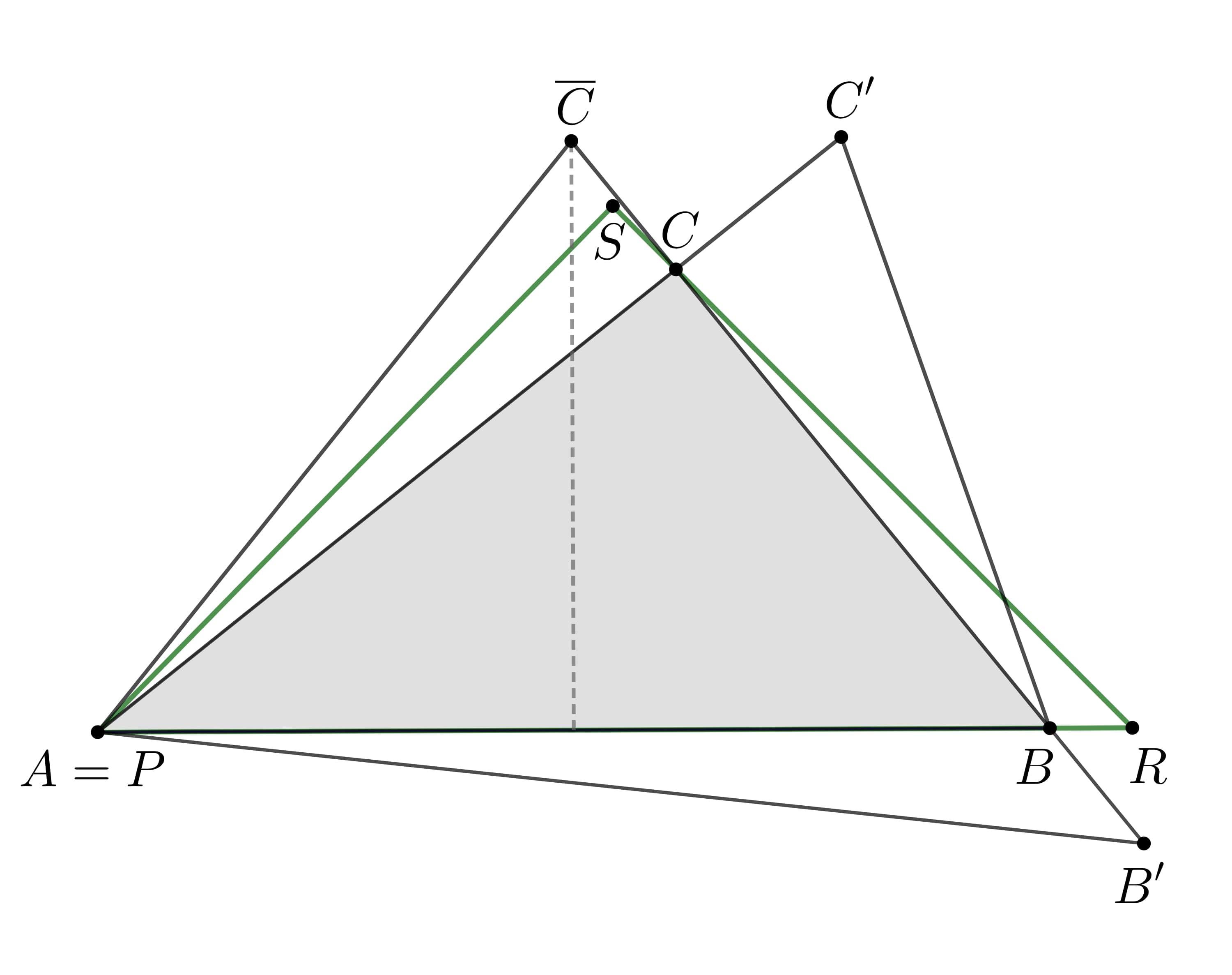}
  \caption{Illustration for Example \ref{ex2}}
  \label{fig_Ex1}
\end{figure}
\begin{ex}\label{ex2}
Consider a triangle $ABC$ that can be embedded in $\RR^2$ as $A=(0,0)$, $C=(1,v)$ and $B=(x_b,0)$ with $ 1 < x_b <  x^*_v$ (the value $x^*_v$ is defined in \Cref{claim:ex2}; see also Figure \ref{fig_Ex1}). Let $PRS$ be the an isosceles triangle with $P=A$, $S = (x^*_v,0) $, and $R$ defined as the point on  the $\vec{S_xC}$ ray with $|PR|=|RS|$. By definition, $SPR$ is an isosceles container of $ABC$.
\end{ex}
\begin{claim}\label{claim:ex2-optimality}
The perimeter of $PRS$ is smaller than the perimeter of any special container of $ABC$.
\end{claim}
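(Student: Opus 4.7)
The plan is to mirror the proof of \Cref{claim:ex1}. By \Cref{cspec}, after relabeling the vertices of $ABC$ so that the convention $a<b<c$ holds, any minimum perimeter special container of $ABC$ is among $AB'C$, $ABC'$, and $AB\overline{C}$. It therefore suffices to show that $\per{PRS}$ is strictly less than the perimeter of each of these three candidates that is a valid container of $ABC$.

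The central observation is that the container ``$AB\overline{C}$'' in the original labeling of $ABC$ coincides with the member $PR_{x_b}S_{x_b}$ of the one-parameter family from \Cref{claim:ex2}. Indeed, $\overline{C}$ is the intersection of the perpendicular bisector of $AB$ with the line $BC$; with $A=(0,0)$, $B=(x_b,0)$, $C=(1,v)$ a direct calculation gives $\overline{C}=\bigl(x_b/2,\ x_bv/(2(x_b-1))\bigr)$, which matches the coordinates of $R_{x_b}$. Hence $\per{AB\overline{C}}=f_v(x_b)$, and since $f_v$ attains its unique minimum on $(1,2)$ at $x_v^*$ while $x_b\in(1,x_v^*)$, we conclude $\per{AB\overline{C}}=f_v(x_b)>f_v(x_v^*)=\per{PRS}$.

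For the remaining candidates (in the canonical labeling, i.e., $AB'C$, $ABC'$, and, depending on the ordering of $x_b$ relative to $\sqrt{1+v^2}$, possibly a differently-placed $AB\overline{C}$), I would proceed case by case using the explicit coordinates. In each subcase one either shows that the candidate triangle fails to contain $ABC$ (so it is not a valid container, and the inequality holds vacuously), or derives an explicit lower bound on its perimeter that exceeds $f_v(x_v^*)$. The hypothesis $v\geq 0.56$ appearing in \Cref{claim:ex2} is presumably the sharp threshold ensuring that these estimates hold throughout the admissible region.

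I expect the main obstacle to be this final step: the perimeters of $AB'C$ and $ABC'$ (in whichever relabeling is canonical for the given $(v,x_b)$) are explicit but somewhat unwieldy functions of $v$ and $x_b$, while $f_v(x_v^*)$ itself has no simple closed form. Completing the verification will likely require careful monotonicity analysis in $x_b$ for fixed $v$, combined with asymptotic or numerical estimates at the boundary values $v\in\{0.56,\sqrt{3}\}$ and $x_b\to 1^+$, $x_b\to x_v^*$, or possibly a computer-assisted symbolic check.
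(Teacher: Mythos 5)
Your proposal follows the same route as the paper: reduce via \Cref{cspec} to the three candidates $AB'C$, $ABC'$, $AB\overline{C}$, and observe that $AB\overline{C}$ is exactly the member $PR_{x_b}S_{x_b}$ of the one-parameter family from \Cref{claim:ex2}, so that $\per{AB\overline{C}}=f_v(x_b)>f_v(x_v^*)=\per{PRS}$ by uniqueness of the minimizer and $x_b\neq x_v^*$. (You state this inequality in the correct direction; the paper's text writes it reversed, evidently a typo, since as written it would contradict the claim being proved.) The only real divergence is in how the remaining two candidates are handled. You anticipate a global monotonicity/asymptotic analysis over the whole admissible range of $(v,x_b)$, possibly computer-assisted; the paper does something much more modest: it checks $\per{AB'C}\approx 4.229$ and $\per{ABC'}\approx 4.084$ against $\per{PRS}\approx 4.056$ numerically for the single instance $v=0.7$, $x_b=1.57$, and accordingly labels the whole argument a ``Proof outline.'' That single verification is all the paper actually uses later (the claim is invoked only for concrete instances in the realizability discussion), but it does mean the general statement for all $v\in[0.56,\sqrt{3})$ and $x_b\in(1,x_v^*)$ is not established there either; the work you describe for that general case, including the relabeling issue when $x_b<\sqrt{1+v^2}$ which you rightly flag and the paper ignores, is genuinely absent from the paper and would need to be carried out if the claim is to be read in full generality.
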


\begin{proof}[Proof outline]
By \Cref{cspec}, we only need to show that $PRS$ has a smaller perimeter than the special containers $AB'C, ABC'$, and $AB\overline{C}$.
Observe that by the choices of $x^*_v$ and $x_b$, we have $\per{AB\overline{C}} = f_v(x_b) < f_v(x^*_v) = \per{PRS}$.

We verify the remaining cases only for the fixed value $v=0.7$. The function $f_{0.7}(x)$ takes its minimum at $x_{0.7}^*\approx 1.57517$, and thus $\per{PRS} = f_{0.7}(x_{0.7}^*)\approx 4.056333$. On the other hand, if we set e.g. $x_b = 1.57$, we have $\per{AB'C}\approx 4.229145$ and $\per{ABC'}\approx 4.084007$.
\end{proof}
\subsection{Proof of \Cref{thm:main-counterexamples}}\label{sec:min-perim-proof}
We start by proving that every smallest perimeter isosceles container of a triangle $\Delta=ABC$ is either a special container or one of the two triangles constructed in the Examples \ref{ex0} and \ref{ex2}. Later, we will show that each of these five containers is realized as the unique minimum perimeter isosceles container for some triangle $ABC$.
By Lemmas \ref{lem:01} and \ref{lem:m1}, we have the following statements on minimum perimeter isosceles containers. 
 
 \begin{lem}\label{lemma:GG}
 Let $PRS$ be any minimum area isosceles triangle enclosing $ABC$. Then~\vspace{-.5em}
\begin{enumerate}[(i)]
 \setlength{\itemsep}{1pt}
 \setlength{\parskip}{1pt}
     \item a side of $PRS$ contains a side of $ABC$;
     \item each side of $PRS$ contains a vertex of $ABC$;
     \item $ABC$ and $PRS$ share a common vertex;
     \item no vertex of $ABC$ lies in the interior of $PRS$.
 \end{enumerate}
 \end{lem}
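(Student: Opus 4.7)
The plan is to derive all four statements by invoking the already-established Lemmas~\ref{lem:01}(ii) and~\ref{lem:m1}(iii), which are the exact structural results this lemma is designed to package. The one subtlety is that Lemma~\ref{lem:01} is phrased in terms of \emph{similar copies} of a fixed triangle, whereas a minimum perimeter isosceles container is minimum across \emph{all} isosceles triangles, not only similar copies of a chosen one. The first step is therefore to observe that any minimum perimeter isosceles container $PRS$ of $ABC$ must in particular be a minimum perimeter similar copy of itself containing $ABC$: otherwise a strictly smaller homothetic shrink of $PRS$ would still be isosceles and would still contain $ABC$, contradicting the perimeter-minimality of $PRS$.

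Once this reduction is made, I would apply Lemma~\ref{lem:01}(ii) with $\Delta_1=PRS$ and $\Delta_2=ABC$. Part (ii)(c) of that lemma immediately gives statement (iii): $PRS$ and $ABC$ share a vertex. Part (ii)(b) is exactly statement (ii): every side of $PRS$ contains a vertex of $ABC$. To obtain statement (i), I would combine part (ii)(a)—there is a side $\ell$ of $PRS$ containing two vertices of $ABC$—with convexity of the segment joining those two vertices: since both endpoints lie on the line supporting $\ell$, and the triangle $ABC$ lies in the closed half-plane bounded by $\ell$ containing $PRS$'s third vertex, the whole segment (which is a side of $ABC$) lies on $\ell$.

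For statement (iv), I would invoke Lemma~\ref{lem:m1}(\ref{lemitem:242}): every vertex of $ABC$ lies on a side of $PRS$, i.e.\ on the boundary of $PRS$, so no vertex of $ABC$ lies in the interior.

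I do not foresee a real obstacle here; the lemma is essentially a bookkeeping corollary of earlier material, and the only point that needs care is the reduction from "minimum among isosceles containers" to "minimum among similar copies of $PRS$ containing $ABC$," so that Lemma~\ref{lem:01} can be applied. Everything else is a direct citation. (Note that the statement as written says "minimum area" but the context of the section is minimum perimeter; the argument above is insensitive to this choice because Lemmas~\ref{lem:01} and~\ref{lem:m1} cover both the area and the perimeter versions simultaneously.)
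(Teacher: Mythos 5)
Your proposal is correct and matches the paper exactly: the paper derives this lemma directly from Lemmas~\ref{lem:01} and~\ref{lem:m1} with no further argument, which is precisely the citation chain you give (your extra care about reducing ``minimum among all isosceles containers'' to ``minimum among similar copies of $PRS$'' and upgrading ``two vertices on a side'' to ``contains a side'' only makes the bookkeeping explicit). You are also right that ``minimum area'' in the statement is a slip for ``minimum perimeter,'' and that the argument is insensitive to this.
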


In what follows, $PRS$ is labeled so that  $|PR| = |RS|$.
If  $PRS$ shares the vertex $R$ with $ABC$, but it does not share the angle at $R$, then we can get a smaller perimeter container by decreasing $\sphericalangle SRP$ (while keeping $|PR|=|RS|$ unchanged).
Thus without loss of generality, we can assume that $PRS$ shares the vertex $P$ with $ABC$. The above restrictions allow only the following types of minimum perimeter isosceles containers that do not share an angle with $ABC$ (see also Figure \ref{fig:min-perim-CaseB})\footnote{Note that in this subsection, we do not assume a special labeling of $ABC$, in particular, we do not necessarily have $|BC|<|AC|<|AB|$.}:
    \begin{figure}[h]
    \centering
    \includegraphics[width=0.32\textwidth]{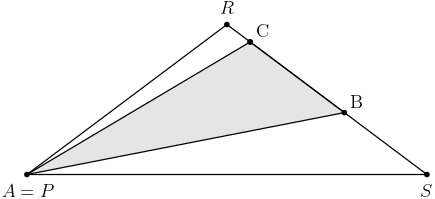}
    \includegraphics[width=0.32\textwidth]{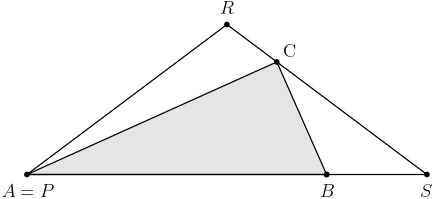}
    \includegraphics[width=0.32\textwidth]{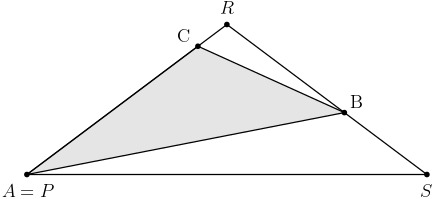}
    \caption{Illustration for Case 1 (left and middle) and Case 2 (right).}
    \label{fig:min-perim-CaseB}
\end{figure} 

\medskip
\noindent
\textbf{Case 1:} \textit{If two vertices of $ABC$ lie in the interior of $RS$, or one of the vertices of $ABC$ lies in the interior of the side $RS$ and one lies in the interior of $PS$. }

The smallest perimeter isosceles containers of these types are precisely the non-special optimal containers shown in Examples \ref{ex0} and \ref{ex2}. 
\medskip

\noindent
\textbf{Case 2:}\textit{ One vertex of $ABC$ is in the interior of $PR$ and one is in the interior of $RS$.}

Let $T$ denote the base of the altitude perpendicular to $RS$ and let $B$ denote the vertex in $RS$. If $|SB|\leq|ST|$ , then $\sa SBP\ge 90^{\circ}$, hence we can rotate $ABC$ with center $A=P$ such that the triangle remains in $PRS$ and hence $PRS$ was not minimal, see \Cref{fig:Case5}. Note that this happens if $PRS$ is not acute. From now on, we assume that $\sa SBP< 90^{\circ}$. Hence $|AB|<|AR|$ if $B\ne R$. %and hence $\sphericalangle SRP < 90^{\circ}$ with $|TS|<|BS|$. % hence . \red{en itt forditva irnam. egyreszt $pds$ kisebb 90, es utana a tobbi. merthogy itt a lenyeg, hogy ne lehessen forgatni.}

%, because $\sa SBP < 90^{\circ}$ (i.e., $\sa PBR>90^{\circ}$). %thus $SR$ is the longer segment than $SD$.
If $|AC|<|AB|$, then we take $C'\in AR$ such that $|AB|=|AC'|<|AR|$ so $AC'\subset AR$ (\Cref{fig:Case5}). Thus, $ABC'$ is an isosceles container of $ABC$ and $ABC'\subsetneq PRS$. Hence, $PRS$ was not minimal. Therefore, we may assume that $|AC|>|AB|$, as $|AC|=|AB|$ would imply that $ABC$ was isosceles. %\red{valahol meg kene engedni az egyenloseget}
\begin{figure}[h!]
  \centering
  \includegraphics[width=0.6\textwidth]{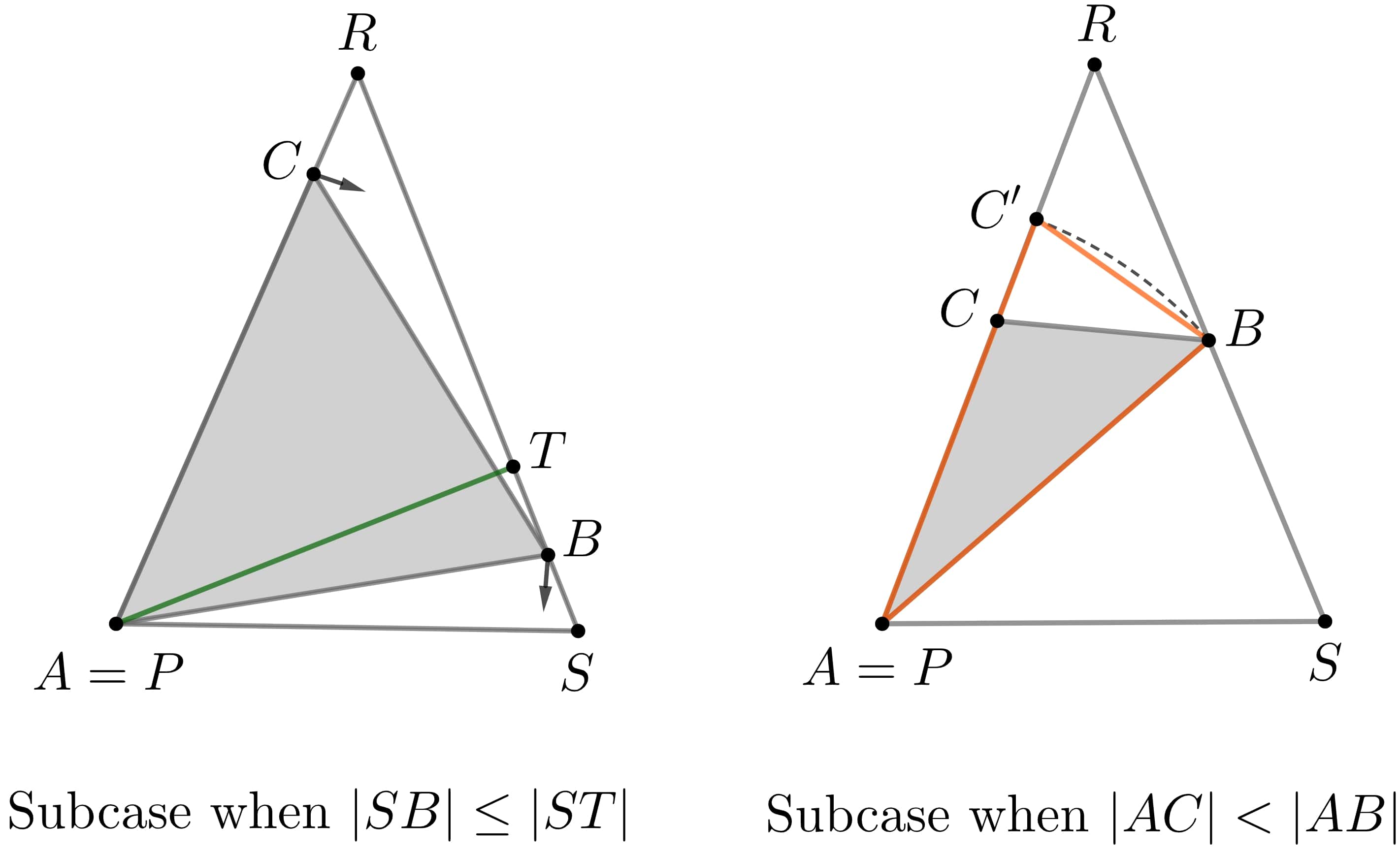}  \caption{Simple configurations of Case 2.}
  \label{fig:Case5}
\end{figure}

If $\sa RAB<\sa BRA$ holds, let $B'$ be the point on the line $AB$ such that $|AC| = |AB'|$ then we have $|AB'|=|AC|<|AR|=|RS|$, and hence $\per{AB'C}<\per{PRS}$, thus $PRS$ was not minimal.
Thus, assume that $ \sa BRA \leq \sa RAB$ and as $\sa RAB+ \sa BRA = \sa SBP < 90^\circ$, we get that $\sa BRA=\sa PRS<45^{\circ}$. 

For the remaining part, we embed the configuration in $\RR^2$ such that
 $P=A=(0,0)$, $R=(x,0)$ and $B=(1,h)$, where $x>1$ and $h>0$, see \Cref{fig:Case5cord}. 
%(Note that we reflect the whole situation to the perpendicular bisector of $SP$. \red{melyik abrahoz kepest? amugy szerintem egyikhez se}) 
Under the assumptions that $|AC|>|AB|$
%, $\sa RBP>90^{\circ}$, 
and $\sa PRS<\min(\sa RAB, 45^{\circ})$, we show that $\per{PRS}$ as a function of $x$ is increasing. Thus, as $B\ne R, C\ne R$ there exists a smaller perimeter isosceles container of $ABC$ than $PRS$ (e.g. $PR'S'$ in \Cref{fig:Case5cord}). The condition $\sa PRS<\sa RAB$ implies that $|PT'|<|RT'|$, where $T'$ is the base of the altitude of $PR$, hence $x>2$.  
%\red{itt meg nem csereltem le, de szerintem a $T$-t hasznaljuk, es egy masik magassag talppontjara, szoval jo lenne itt mondjuk $U$.}
\begin{figure}[h!]
  \centering
  \includegraphics[width=0.5\textwidth]{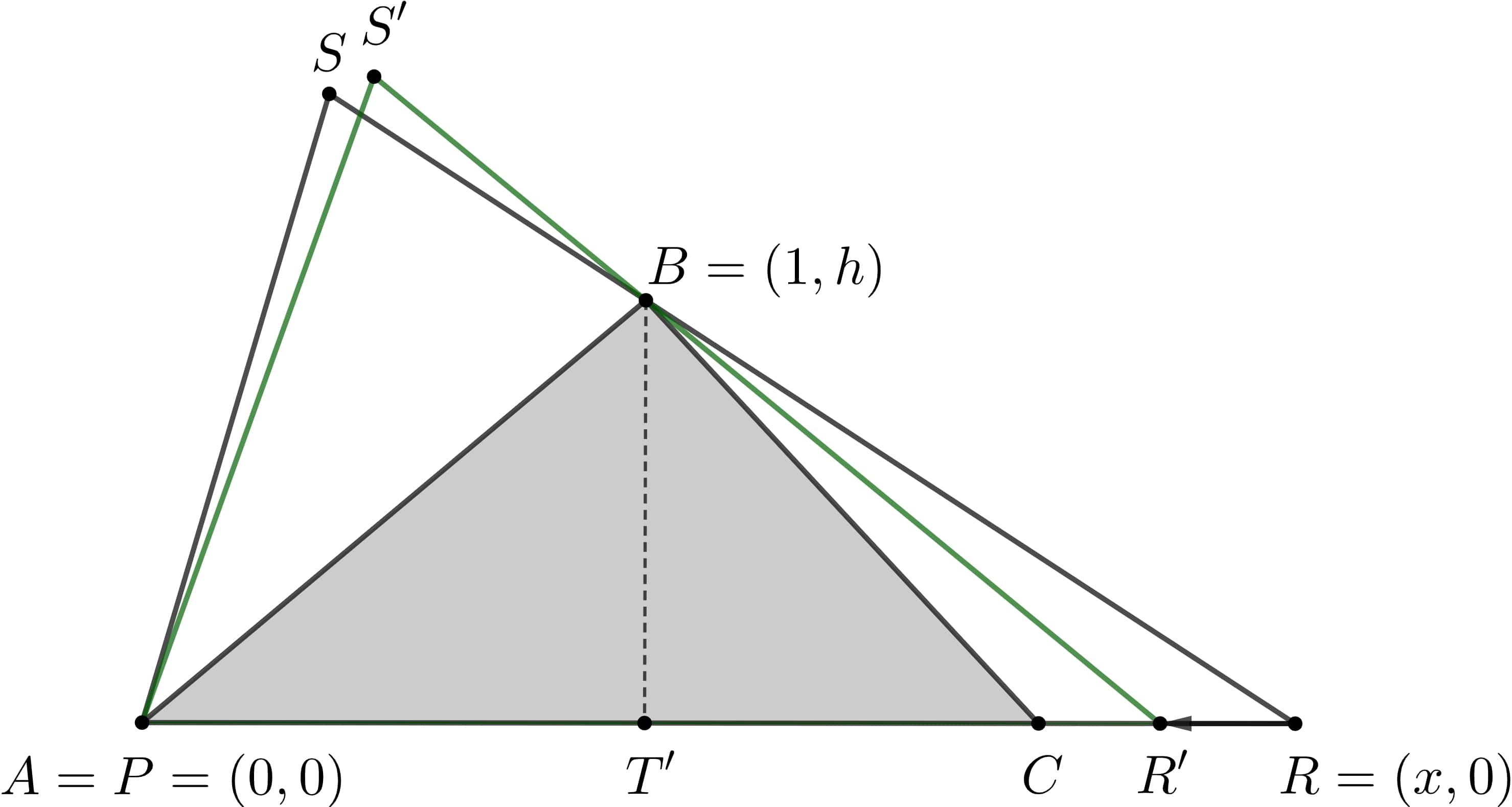}
  \caption{Case 3 in a coordinate system.}
  \label{fig:Case5cord}
\end{figure}

Clearly, $|BR|=\sqrt{h^2+(x-1)^2}$ and $\sin(\sa PRS)=\frac{h}{\sqrt{h^2+(x-1)^2}}$. Hence 
$\per{PRS}=2x(1+\sin(\frac{\sa PRS}{2}))$.
As $\sin\delta=2\sin(\frac{\delta}{2})\sqrt{1-\sin^2(\frac{\delta}{2})}$, we get 
$$\sin\left(\frac{\sa PRS}{2}\right)=\frac{1}{\sqrt{2}}\sqrt{1 \pm \frac{x-1}{h}\sqrt{\frac{1}{1+\left(\frac{x-1}{h}\right)^2}}},$$ where $\pm$ is taken to be $-$ sign, since $\sa PRS<45^{\circ}$. Therefore, %\red{van valami oka, hogy a $h$-t tortben cipeljuk?}
\[\per{PRS}=2x+\sqrt{2}x\sqrt{1 - \frac{x-1}{h}\sqrt{\frac{1}{1+(\frac{x-1}{h})^2}}}.\]
Let $y=\frac{x-1}{h}$ and let $f_h(y)=(1+hy)\bigg(1+\sqrt{\frac{1 - y}{2}\sqrt{\frac{1}{1+y^2}}}\bigg)$. It follows from our assumptions that $y>1/h$. We show that $f_h(y)$ is strictly increasing in $y$, which implies that $PRS$ is not a minimum perimeter isosceles container of $ABC$. 
For $g(y):=1+\sqrt{\frac{1 - y}{2}\sqrt{\frac{1}{1+y^2}}}$, we show that $f_h'(y)=((1+hy)g(y))'>0$, equivalently $-g'(y)<\frac{hg(y)}{1+hy}$.
Simple calculation shows that $$-g'(y)=\frac{1}{2\sqrt{2} (1+y^2)}\sqrt{1+\sqrt{\frac{y^2}{1+y^2}}}<\frac{1}{2(1+y^2)},$$ where the last inequality holds as $\frac{y^2}{1+y^2}<1$ for all $y\in \mathbb{R}$. 
Note that $g(y)>1$, hence $hg(y)>h$.
Thus, it is enough to show that \begin{equation*}\label{eqveg}
  \frac{1}{2(1+y^2)}<\frac{h}{1+hy} \quad \quad \textrm{ if } y=\frac{x-1}{h}>\frac{1}{h}.
\end{equation*}
This is true if and only if $0<2hy^2-hy+2h-1$. This holds if the roots of this polynomial satisfy $y_1<y_2=\frac{h+\sqrt{-15h^2+8h}}{2h}\le\frac{1}{h}$. The last inequality is equivalent to $0\le 4h^2-3h+1=(2h-1)^2+h$, which is true for $h>0$. Therefore, the argument above verifies that in this case $PRS$ is not minimal.  This concludes the proof in Case 2.
\smallskip
\paragraph{Note on realizability.} Now we briefly discuss that each of the special containers $AB'C$, $ABC'$, $AB\overline{C}$, and triangles constructed in Examples \ref{ex0} and \ref{ex2} can occur as a minimum perimeter container for some $ABC$. It is easy to find triangles for which one of the special containers is the best among the five options.

To see that the container of \Cref{ex2} is optimal for some triangles, note that the construction presented in Example \ref{ex0} works only if the special containers of $ABC$ satisfy $\gamma^*\in(\sa AB'C,\sa AB\overline{C})$.
Now consider the example from the proof of \Cref{claim:ex2-optimality}.
 It can be easily calculated that under these choices $\sa(B\overline{C}A)\approx78,310868 ^{\circ}$. 
This (together with \Cref{claim:ex2-optimality}) implies that for the example presented in the  proof of \Cref{claim:ex2-optimality}, the container described in \Cref{ex2}
is better the one given in \Cref{ex0} and than any special container.

Finally, we show that the container presented in \Cref{ex0} is optimal for some triangles. 
Following the construction in the proof of \Cref{claim:ex2-optimality}, consider the triangle $ABC$ with $A=(0,0),$ and $C=(1,0.8)$ and $B=(0,x^*_{0.8})$ such that $f_{0.8}(x)$ takes its minimum at $x^*_{0.8}\approx1.62474$. We get that the container constructed in \Cref{ex2} coincides with the special container $AB\overline{C}$ and $\per{AB\overline{C}}=f_{0.8}(x^*_{0.8})\approx 4.264511$. Simple calculation shows that $\per{ABC'}\approx 4.3250804$, thus $\per{AB\overline{C}}<\per{ABC'}$. Since $\sa(B\overline{C}A)\approx75.974334^{\circ}<\gamma*<\sa(BCA)=\sa(B'CA)\approx89.327359^\circ$, the construction of \Cref{ex0} provides smaller perimeter than any of the special containers, indeed if we let $SPR$ to be the container constructed in \Cref{ex0} for out choice of $ABC$, then we get $\per{PRS}\approx 4.264431$.

This concludes the proof of \Cref{thm:main-counterexamples}. \qed

\bibliographystyle{siam}

%%%%%%%%%%%%%%%%%%%%%%%%%%%%%%%%%%%%

\section*{Appendix - Basic inequalities for embedded triangles}
\subsubsection*{Proof of  \Cref{lem:01}}
Observe that if $\Delta$ and $ \Delta'$ are similar triangles then $\per{\Delta}<\per{\Delta'}$ and $\ar{\Delta}<\ar{\Delta'}$ hold if and only if $\diam(\Delta)<\diam(\Delta')$.
Therefore, it is sufficient to prove following the statements:
\begin{claim}\label{claim:rephrase}
Let $\Delta_1$ be a triangle inside the triangle $\Delta_2$. 
If $\Delta_1'$ is a maximum diameter triangle similar to $\Delta_1$ contained in $\Delta_2$, then~\vspace{-.5em}
\begin{enumerate}
 \setlength{\itemsep}{1pt}
  \setlength{\parskip}{1pt}
    \item[(A)] $\Delta_1'$ has two vertices on a side of $\Delta_2$;
    \item[(B)] each side of $\Delta_2$ contains a vertex of $\Delta_1'$;
    \item[(C)] $\Delta_1'$ and $\Delta_2$ have a common vertex.
\end{enumerate}
\end{claim}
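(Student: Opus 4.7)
The plan is to prove (B), (A), (C) separately by contradiction, showing that failure of any one yields a similar copy of $\Delta_1$ in $\Delta_2$ with strictly larger diameter. Similar copies of $\Delta_1$ in $\RR^2$ form a four-parameter family (two translations, one rotation, one scaling factor), while the containment $\Delta_1' \subseteq \Delta_2$ is cut out by the nine half-plane inequalities pairing a vertex of $\Delta_1'$ with a side of $\Delta_2$. At a maximum-diameter configuration, enough of these inequalities must be tight to obstruct enlargement, and whenever too few are tight I will exhibit an explicit enlargement.

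For (B), suppose some side $e$ of $\Delta_2$ contains no vertex of $\Delta_1'$, so $\Delta_1'$ sits at positive distance from the line of $e$. Let $V$ denote the vertex of $\Delta_2$ opposite $e$ and consider the homothety $h_\lambda$ centered at $V$ with ratio $\lambda>1$. The two sides of $\Delta_2$ meeting at $V$ lie on lines through $V$ and are therefore preserved by $h_\lambda$, so the corresponding half-plane constraints remain satisfied; meanwhile, the distance of $h_\lambda(\Delta_1')$ from the line of $e$ depends continuously on $\lambda$ and stays positive for $\lambda$ sufficiently close to $1$. Hence $h_\lambda(\Delta_1') \subseteq \Delta_2$ is a similar copy of $\Delta_1$ of strictly larger diameter, contradicting the maximality of $\Delta_1'$.

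For (A), assume (B) holds but no side of $\Delta_2$ contains two vertices of $\Delta_1'$. The strategy is to rotate $\Delta_1'$ slightly about one of its own vertices so that the other two vertices are pushed strictly into the interior of $\Delta_2$, after which a homothety centered at the fixed vertex with ratio slightly above $1$ produces a larger similar copy inside $\Delta_2$. If some vertex $v_1$ of $\Delta_1'$ coincides with a vertex $V$ of $\Delta_2$, then the failure of (A) forces $v_2, v_3$ off the two sides of $\Delta_2$ incident to $V$, so a suitably chosen small rotation about $v_1 = V$ moves them into the interior of $\Delta_2$. Otherwise each $v_i$ lies in the relative interior of a distinct side $s_i$ of $\Delta_2$, and one checks that among the three candidate rotation centers $v_1, v_2, v_3$ and the two directions of rotation, some choice moves the other two vertices strictly inward. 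The main obstacle is this last verification: it reduces to showing that for at least one choice of rotation center $v_i$, the inward-normal components (along the respective sides $s_j, s_k$ of $\Delta_2$) of the rotational velocities of $v_j$ and $v_k$ have the same sign, which follows from a short case analysis using the fact that $\Delta_1'$ is non-degenerate.

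Finally, (C) is a combinatorial corollary of (A) and (B). By (A), some side $s$ of $\Delta_2$ contains two vertices of $\Delta_1'$, and by (B) each of the other two sides of $\Delta_2$ must contain a vertex of $\Delta_1'$ as well; since $\Delta_1'$ has only three vertices, pigeonhole forces some vertex of $\Delta_1'$ to lie on two distinct sides of $\Delta_2$, hence at their common endpoint, which is a vertex of $\Delta_2$. Thus $\Delta_1'$ and $\Delta_2$ share this vertex.
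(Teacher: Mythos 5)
Your arguments for (B) and (C) are correct and essentially coincide with the paper's: part (B) is the same homothety-from-the-opposite-vertex argument (the paper phrases it via the auxiliary triangle $D'E'F$ cut off by the parallel line through the closest vertex, but the mechanism is identical), and part (C) is the same pigeonhole observation. The problem is part (A). Note first that the paper does not prove (A) at all --- it invokes Post's lemma (\Cref{tPost}, \cite{Post}), which is a genuine theorem with a nontrivial proof. You attempt to prove it by a local perturbation (rotate about a vertex of $\Delta_1'$ to push the other two vertices strictly inside, then dilate), and you defer the key step to ``a short case analysis'' showing that some choice of rotation center and direction moves both remaining vertices inward. That deferred claim is false. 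Take $\Delta_2$ equilateral and $\Delta_1$ similar to $\Delta_2$, and let $\Delta_1'$ be the medial triangle of $\Delta_2$. Then (B) holds, (A) fails (each vertex of $\Delta_1'$ sits at the midpoint of a distinct side), and a direct computation with the threefold symmetry shows that for every choice of center $v_i$ and every direction of rotation, exactly one of the other two vertices has strictly positive inward-normal velocity and the other has strictly negative inward-normal velocity; so every small nonzero rotation about a vertex pushes some vertex out of $\Delta_2$. Your proof of (A) is by contraposition (``(A) fails $\Rightarrow$ the configuration can be enlarged by my specific moves''), and this configuration falsifies that implication, so the argument does not establish (A) for the maximizer.

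The gap is not easily patched within your framework: for the medial-triangle configuration one can check that \emph{no} infinitesimal similarity perturbation (any combination of translation, rotation about any point, and dilation with positive scale derivative) keeps all three vertices weakly inside $\Delta_2$ --- summing the constraints coming from the three sides yields a contradiction --- so the medial triangle is a genuine local maximizer of the diameter among contained similar copies, even though it is far from the global maximum ($\Delta_2$ itself). Hence any purely local, first-order argument must fail on it, and (A) really requires a global argument of the kind Post gives (or, at minimum, an argument that escapes such critical configurations by a non-infinitesimal move). Either cite \Cref{tPost} for part (A), as the paper does, or supply a genuinely global proof.
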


\medskip\noindent
\textit{Proof.}
Let $\Delta_1 = ABC$, $\Delta'_1 = A'B'C'$, and $\Delta_2 = DEF$.~\vspace{-.5em}
\begin{enumerate}
 \setlength{\itemsep}{1pt}
  \setlength{\parskip}{1pt}
\item[(A)] The statement was proved by Post~\cite{Post}.
\item[(B)] Assume indirectly that the side $DE$ does not contain any of the vertices $A',B',C'$ and let $A'$ the closest one to $DE$.
  Let $l'$ denote the line parallel to $DE$ which contains $A'$. %inside $SPR$. 
Let $D'=l'\cap DF$ and $E'=l'\cap EF$. Then the triangle $D'E'F\subset DEF$ is similar to $DEF$ and it contains $A'B'C'$. 
If $\mathcal S$ is the homothety with center $F$ and ratio $|D F| / |D'F|$, then $\mathcal{S}(A'B'C')\subset DEF$ is a similar copy of $A'B'C'$ which has larger diameter than $A'B'C'$, a contradiction. Hence, $DE$ contains a vertex of $A'B'C'$.
The argument applies for any other side of $DEF$. 
\item [(C)] By part (A), $A'B'C'$ has two vertices that are on the same side of $DEF$. If any of these vertices coincide with a vertex of $DEF$, then the second part of the statement holds. Otherwise,
the third vertex of $A'B'C'$ must be contained in two sides of $DEF$, thus it is a %hence this vertex have to be
vertex of $DEF$.   \qed
\end{enumerate}
\subsubsection*{Proof of \Cref{lem:m1}} Let $\Delta = ABC$, $\Delta_1=DEF,$ and $\Delta_2 = PRS$.
\begin{enumerate}
\item[(i)]
We prove that the vertices of minimum area (resp. perimeter) isosceles containers are contained in a compact subset of the plane. (A similar result for the set of embedded triangles is trivial.)

The case of the area was handled in \cite{KPS2020}. Concerning the perimeter, it is easy to see that there is a special container whose perimeter is at most twice as large as the one of $ABC$. Thus  any minimum perimeter container of $ABC$ is contained in the closed ball $\mathcal{B}$ centered at $A$ with radius $2\per{ABC}$. %since the diameter of a triangle is a lower bound for the perimeter. 

Thus the collection of those isosceles triangles $\overline{\Delta}$ satisfies $ABC\subset \overline{\Delta}\subset \mathcal{B}$ can be considered as a compact subset of $\mathbb R^6$ with respect to the Euclidean topology. 
%the vertices of such a triangle can be parametrized by a compact set. 
The result simply follows from the fact that both the area and the perimeter are continuous functions on the parameter set.
%where their minimum and maximum are attained. 

\item[(iii)] The statement for the minimum area has been proved in \cite{KPS2020}[Lemma 3.2]. 

Let $PRS$ be a minimum perimeter isosceles container of $ABC$. Then, by \Cref{claim:rephrase}, one side of $PRS$ contains two vertices of $ABC$, and by \Cref{lem:01} every side of $PRS$ contains a vertex of $ABC$ and the triangles share a common vertex. Thus either each vertex of $ABC$ is on a side of $PRS$, as we stated, or two vertices of $ABC$ coincide with two vertices of $PRS$ (i.e., the triangles share a common side) as in \Cref{fig:shrink}. In the latter case we distinguish two subcases when the common side is a leg (Case a.) or a base (Case b.) of $PRS$. As \Cref{fig:shrink} illustrates, in both subcases we can find a smaller isosceles triangle (green in \Cref{fig:shrink}) by shrinking the original triangle $PRS$ so that the modified isosceles triangle contains $ABC$ and has smaller area and perimeter.   %\red{a terulet nem feltetlenul csokken az a esetben, mert ugye ha a befoglalo tompaszogu. es egyebkent nem shrinking. Illetve Case a.}
\begin{figure}[h!]
\centering
  \centering
  \includegraphics[width=0.8\textwidth]{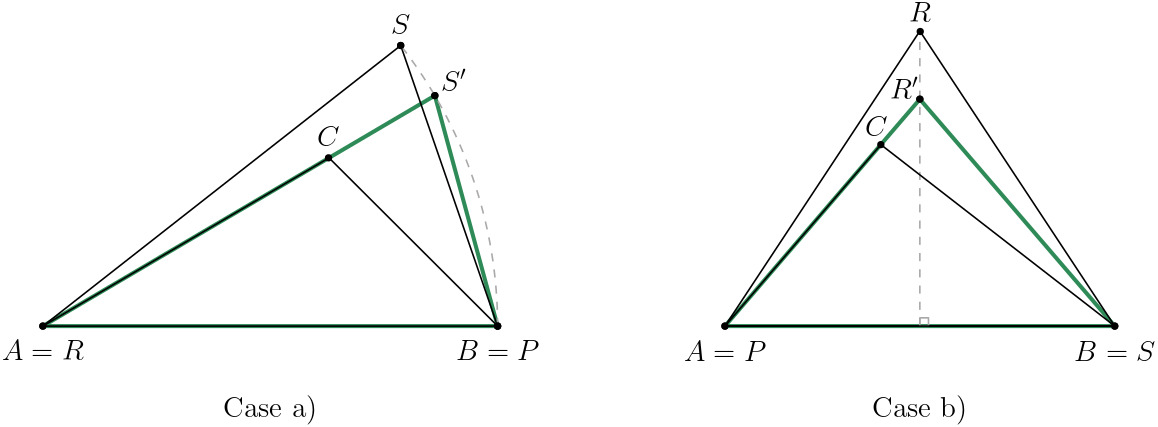}
  \caption{Illustration for the proof of  \Cref{lem:m1} (iii).}
  \label{fig:shrink}
\end{figure}
\item[(ii)] The proof of this case is analogous to the proof of case \eqref{lemitem:242} for the perimeter and the proof of \cite{KPS2020}[Lemma 3.2] for the area. %We left it to the reader.  
\end{enumerate}
\qed
\subsubsection*{Proof of \Cref{lem:ie2}}
%We start with a few trivial observations. We will use them in the proof of the three cases of the lemma. \\
The areas of special embedded triangles of the first, the second and the third type are the following:  $$\ar{A'BC}=\frac{a^2\sin\gamma}{2}, \ \ \ar{AB'C}=\frac{b^2\sin\alpha}{2}, \ \ \ar{A''BC}=\frac{a^2\sin\beta}{2}.$$
%The areas of special embedded triangles of the second type are the following: 
$$\ar{A_1BC}=\frac{a^2\tan\beta}{4}, \ \ \ar{AB_1C}=\frac{b^2\tan\alpha}{4},\ \ \ar{ABC_1}=\frac{c^2\tan\alpha}{4}.$$
%The areas of special embedded triangles of the third type are the following:
$$\ar{\overline{A}BC}=\frac{a^2\sin(2\beta)}{2}, \ \ \ar{\overline{\overline{A}}BC}=\frac{a^2\sin(2\gamma)}{2},  \ \ \ar{A\overline{B}C}=\frac{b^2\sin(2\gamma)}{2}.$$
\begin{enumerate}
    \item It follows easily from the previous equalities that  $$\ar{A''BC}=\frac{a^2\sin\beta}{2} < \frac{a^2\sin\beta}{2} \frac{c}{b} = \frac{a^2\sin\beta}{2} \frac{\sin\gamma}{\sin\beta}=\ar{A'BC}.$$ 
    \item 
    We have seen that $\ar{A_1BC}=\frac{a^2\tan\beta}{4}$ and $\ar{AB'C}=\frac{b^2\sin\alpha}{2}$. Now $\frac{a^2\tan\beta}{4} < \frac{b^2\sin\alpha}{2}$ is equivalent to $\frac{a^2}{2b^2}=\frac{\sin^2\alpha}{2\sin^2\beta} < \frac{\sin\alpha\cos\beta}{\sin\beta}$, by using the law of sines. Reformulating this we have $\sin\alpha<\sin(2\beta)$, which always holds as $\alpha < \min \{ 2 \beta, 180- 2 \beta \}$.
    
    The inequality $\frac{b^2\tan\alpha}{4} < \frac{c^2\tan\alpha}{4}$ implies that $\ar{AB_1C}< \ar{ABC_1}$. 
\item 
We first prove
$$\ar{\overline{A}BC} < \ar{ABC_1}.$$ Using the equations given above this is equivalent to
$$\frac{a^2\sin(2\beta)}{2} < \frac{c^2\tan\alpha}{4}\Longleftrightarrow a^2\sin\beta\cos\beta < \frac{c^2\sin\alpha}{4\cos\alpha}.$$
By replacing $\frac{\sin\alpha}{\sin\beta}$ with $\frac{a}{b}$ we obtain $ \frac{4ab}{c^2} \cos\alpha\cos\beta<1.$ 
Law of cosines gives 
$$ \frac{4ab}{c^2} \frac{2bc}{b^2+c^2-a^2}\frac{2ac}{a^2+c^2-b^2}<1.$$
After a simple rearrangement we obtain 
$$(c^2+(a^2-b^2))(c^2-(a^2-b^2)) < c^4, $$
which holds by $a \ne b$.

The inequality $\ar{A\overline{B}C} > \ar{\overline{\overline{A}}BC}$ follows from
 $$\ar{\overline{\overline{A}}BC}=\frac{a^2\sin(2\gamma)}{2} < \frac{b^2\sin(2\gamma)}{2}=\ar{A\overline{B}C}.$$

The length of the legs of the isosceles triangles $A\overline{B}C$ and $AB'C$ are equal to $b$, but the apex angle is greater in the latter triangle. As both apex angles are upper bounded by  $\alpha < 90^\circ$, we get
 $\ar{A\overline{B}C} < \ar{AB'C}$. 
\item 
If $ABC$ is obtuse, then $0^\circ<\alpha< 45^\circ$. The function $\sin(2\alpha)$ is strictly monotonically increasing on the interval $[0^\circ 45^\circ]$. 
Since $\alpha < \beta$ we have $\alpha <180^\circ-\gamma <90^{\circ}$. Thus
$$\sin(2\alpha) < \sin(180-\gamma)= \sin\gamma,$$
$$2\sin\alpha\cos\alpha < \sin\gamma$$
$$\sin\alpha  < \frac{\sin\gamma}{2\cos\alpha}.$$
We multiply both sides of the inequality with the positive number  $\frac{\sin\alpha\sin\gamma}{2}$:
$$\frac{\sin^2\alpha\sin\gamma}{2} < \frac{\sin\alpha\sin^2\gamma}{4\cos\alpha}.$$
By the law of sines this is equivalent to
$$\frac{a^2\sin\gamma}{2} < \frac{c^2\sin\alpha}{4\cos\alpha},$$
which implies
\begin{equation*}
  \ar{A'BC}= \frac{a^2\sin\gamma}{2}\leq \frac{c^2\tan\alpha}{4}=\ar{ABC_1}.
\end{equation*}

\qed
\end{enumerate}

\subsubsection*{Proof of \Cref{lem:ie1}}
\begin{enumerate}
  \item 
  $\per{ABC'}<\per{ABC''}$ follows from the Hinge theorem since
   $|AB|=|AC'|=|AC''|=c$ and $\alpha=\sa C'AB<\sa C''BA=\beta<90^{\circ}$. 
   \item Notice that $|AB|=|AC_1|=|AC_2|=c$. On the other hand $\sa BAC'=\alpha < \sa BAC_2=180^{\circ}-2 \beta < 180^{\circ}-2 \alpha$ since $\alpha+\beta+\gamma=180^{\circ}$ and $\alpha < \beta <\gamma$. Thus the Hinge theorem gives $\per{ABC'}<\per{ABC_2} < \per{ABC_1}$. 
\item 
Similarly to the previous cases, we have $|AC|=|CB'|=|CB_1|=b$ and $\sa ACB' < \sa ACB_1$ so the perimeter of $AB'C$ is smaller than that of $AB_1C$.

  \item First note that the triangles $\overline{A}BC$ and $A\overline{B}C$ do exist if and only if $ABC$ is acute, hence we assume $\gamma<90^{\circ}$. First we show that $\per{ABC'}>\per{\overline{A}BC}$. Indeed, 
  $\per{ABC'}=2c(1+\sin(\alpha/2))$ and
  $\per{\overline{A}BC}=a(1+1/\cos\gamma)$, thus it is enough to show that 
 $$2c(1+\sin(\alpha/2)) < a(1+1/\cos\gamma).$$
Using the law of sines we obtain
 $$\frac{2(1+\sin(\alpha/2))}{1+1/\cos\gamma} < \frac{a}{c}= \frac{\sin\alpha}{\sin\gamma}.$$
 Equivalently, 
$$ \frac{2(1+\sin(\alpha/2))}{\sin\alpha} < \frac{1+1/\cos\gamma} {\sin\gamma}= \frac{2(1+ \cos\gamma)}{\sin (2\gamma)}=\frac{2(1+ \sin(90^{\circ}-\gamma))}{\sin(180^{\circ}-2\gamma)}.$$
Note that $(1+\sin x)/\sin (2x)$ is strictly decreasing on the interval $(0^{\circ},60^{\circ})$. It is clear that $90^{\circ}-\gamma<\alpha$, otherwise $90^{\circ}<\beta<\gamma$, contradicting the assumption that $\gamma<90^{\circ}$.    
Hence we get that $\per{ABC'} < \per{\overline{A}BC}$.

The inequality $\per{\overline{A}BC}<\per{A\overline{B}C}$ simply follows from that fact that $\overline{A}BC$ and $A\overline{B}C$ are similar triangles such that the base of $\overline{A}BC$, is of length $a$ so it shorter than the base of $A\overline{B}C$, which is of length $b$. 
\end{enumerate}
\hfill $\square$

\subsection*{Formula for $x_v^*$}

Let $\delta_v :=\sqrt{48v^6+81v^4} - 9v^2$, then $x_v^*$ can be expressed as

\begin{align*}
    x_v^* 
    =
    \frac 1 2\round{ 1 +  \sqrt{
    1 
    + 
    \sqrt[3]{\frac {2\delta_v} 9 }
    -
    \sqrt[3]{\frac{2^5v^6}{3\delta_v}}
    }
    +
     \sqrt{
    2
    +
    \sqrt[3]{\frac{2^5v^6}{3\delta_v}}
    -
     \sqrt[3]{\frac {2\delta_v} 9}
    -\frac{2}{
    \sqrt{
    1 +
     \sqrt[3]{\frac {2\delta_v} 9}
    -
     \sqrt[3]{\frac{2^5v^6}{3\delta_v}}
    % \sqrt[3]{\frac 2 9 \sqrt{48v^6 + 81v^4} - 2v^2}
    % -
    % \sqrt[3]{\frac{2^5v^4}{3\sqrt{48v^2+81}-27}}
    }}
    }
    }.
\end{align*}

\end{document}